\newcommand{\Talpha}{{\bm T}}
\newcommand{\tildeTalpha}{{\bm T}_h}
\newcommand{\gzero}{{\bm w}}
\newcommand{\tildegzero}{{\bm w}_h}
\newcommand{\tildeE}{{\bm E}_h}
\newcommand{\tildeD}{{\bm D}_h}
\newcommand{\projLdh}{\bm Q_h}
\newcommand{\matriz}[1]{\mathcal{#1}}
\numberwithin{equation}{section}
\theoremstyle{definition}
\newtheorem{theorem}{Theorem}[section]
\newtheorem{lemma}[theorem]{Lemma}
\newtheorem{definition}[theorem]{Definition}
\newtheorem{remark}[theorem]{Remark}
\newtheorem{example}[theorem]{Example}
\newcommand{\vertiii}[1]{{\left\vert\kern-0.25ex\left\vert\kern-0.25ex\left\vert #1
		\right\vert\kern-0.25ex\right\vert\kern-0.25ex\right\vert}}
\begin{document}
	
\title{Analysis and approximations of Dirichlet boundary control of Stokes flows in the energy space
\thanks{
W. Gong was supported in part by the Strategic Priority Research Program of Chinese Academy of
				Sciences (Grant No. XDB 41000000), the National Key Basic Research Program (Grant No. 2018YFB0704304) and
				the National Natural Science Foundation of China (Grant No. 11671391 and 12071468). M. Mateos was supported by the Spanish Ministerio de Econom\'{\i}a y Competitividad under project MTM2017-83185-P.  Y.\ Zhang are partially supported by the US National Science Foundation (NSF) under grant number DMS-1818867.
}}

\author{Wei Gong\thanks{The State Key Laboratory of Scientific and Engineering Computing, Institute of Computational Mathematics \& National Center for Mathematics and Interdisciplinary Sciences, Academy of Mathematics and Systems Science, Chinese Academy of Sciences, 100190 Beijing, China. Email address: wgong@lsec.cc.ac.cn}	
	\and Mariano Mateos\thanks{Dpto. de Matem\'aticas. Universidad de Oviedo, Campus de Gij\'on, Spain. Email address: mmateos@uniovi.es}	
	\and John R.\ Singler\thanks{Department of Mathematics
		and Statistics, Missouri University of Science and Technology, Rolla, MO. Email address: singlerj@mst.edu}
	\and Yangwen Zhang\thanks{Department of Mathematical Science, University of Delaware, Newark, DE. Email address: ywzhangf@udel.edu}
}
	
	\date{\today}
	
	\maketitle

\begin{abstract}
 We study Dirichlet boundary control of Stokes flows in 2D polygonal domains. We consider cost functionals with two different boundary control regularization terms: the $L^2$ norm and an energy space seminorm. We prove well-posedness and regularity results for both problems, develop finite element discretizations for both problems, and prove finite element error estimates for the latter problem. The motivation to study the energy space problem follows from our analysis: we prove that the choice of the control space ${\bm L}^2(\Gamma)$ can lead to an optimal control with discontinuities at the corners, even when the domain is convex. We observe this phenomenon in numerical experiments. This behavior does not occur in Dirichlet boundary control problems for the Poisson equation on convex polygonal domains, and may not be desirable in real applications. For the energy space problem, we derive the first order optimality conditions, and show that the solution of the control problem is more regular than the solution of the problem with the ${\bm L}^2(\Gamma)$ regularization. We also prove a priori error estimates for the control in the energy norm, and present several numerical experiments for both control problems on convex and nonconvex domains.
\end{abstract}

\section{Introduction}

PDE-constrained optimal control is an active research area and has been popular for the last several decades.  Interest in analysis and computation for problems in this area has been generated by a wide variety of applications and the fast development of computational resources. There are already several monographs and chapters devoted to various aspects of the field, including theoretical analysis, computational methods, and application areas; see, e.g., \cite{HinzePinnauUlbrich2009,Lions,Casas-Mateos2017}.

Boundary control problems for PDEs are a very important part of this field since for many applications control may only be applied at the boundary of the physical domain.  Dirichlet boundary control problems are especially important in application areas, but the problems can be difficult to analyze mathematically -- especially when the physical domain has a nonsmooth boundary.  One of the key points in the study of Dirichlet boundary control problems is the choice of the control penalty in the cost functional.  A natural goal in many applications is to minimize the ``amount'' of control used, which naturally leads to a boundary control penalty using the $L^2(\Gamma)$ norm.  This also appears to be a reasonable choice from a numerical approximation point of view.  However, in the analysis of such a problem the governing state equation is typically understood in a very weak sense since the Dirichlet boundary condition is only in $L^2(\Gamma)$.

Despite this difficulty, many researchers have considered problems using the $L^2(\Gamma)$ control penalty and developed numerical methods and numerical analysis results.  One of the first contributions was the study of a finite element method for elliptic Dirichlet boundary control problems in \cite{FrenchKing1991}. Control constrained problems governed by semilinear elliptic equations on polygonal domains were studied in \cite{Casas_Raymond_First_SICON_2006}. Optimal-order error estimates were derived for the unconstrained problem in \cite{May_Rannacher_Vexler_High_SICON_2013} for both the control and state by introducing a dual control problem. Higher-order convergence rates were proved in \cite{Deckelnick_Andreas_Hinze_Three_SICON_2009} for {control-constrained} problems in smooth domains based on the superconvergence properties of regular triangulations. In \cite{Gong_Yan_Mixed_SICON_2011} the authors used a mixed finite element method for approximating the elliptic Dirichlet boundary control problem to avoid the very weak formulation of the state equation. For recent results on the regularity of solutions and standard finite element approximations of elliptic Dirichlet boundary control problems we refer to \cite{Apel_Mateos_Pfefferer_Regularity_SICON_2015}, \cite{Mateos2017} and the references cited therein. In \cite{APel_Mateos_Pfefferer_Arnd_DBC_MCRF_2018}, {optimal} error estimates on general (possibly nonconvex) polygonal domains {are obtained for quasi-uniform and superconvergence meshes.} Recently, the hybridizable discontinuous Galerkin (HDG) method has applied to the elliptic Dirichlet boundary control problem on convex domains \cite{ChenFuSinglerZhang,Chen_SINUM_20191,HuShenSinglerZhangZheng_HDG_Dirichlet_control1,HuMateosSinglerZhangZhang2,chen2020infty}. The HDG method also avoids the very weak formulation, and has a lower computational cost compared to traditional discontinuous Galerkin and mixed methods. We also refer to \cite{GongHinzeZhou2016,GongLi2017} for error estimates for parabolic Dirichlet boundary control problems, to \cite{Mateos-Neitzel2016} for state-constrained problems, and to \cite{Casas_Mateos_Raymond_Penalization_ESAIM_Control_Optim_2009} for a Robin penalization approach.

On the other hand, $H^{1/2}(\Gamma)$ appears to be a natural choice to study the state equation in the standard variational formulation. There are also some numerical analysis results in this direction.  The analysis of a finite element method for an elliptic Dirichlet boundary control problem in the energy space setting with $H^{1/2}(\Gamma)$ as the control space was performed in \cite{Of_Phan_Steinbach_Energy_NM_2015}; a boundary element method for this problem is proposed and analyzed in \cite{Of_Phan_Steinbach_Boundary_Element_MMAS_2010}. In \cite{Chowdhury_Thirupathi_Nandakumaran_Energy_2017} a variation to the energy space method is proposed where the control penalty now involves the harmonic extension of the control into the domain; a posteriori error estimates and the convergence of the adaptive finite element method is studied in \cite{Gong_Liu_Tan_Yan} for this approach. Also see \cite{Karkulik20} for another related approach to the energy space method. Sharp convergence rates for the energy space approach have recently been obtained in \cite{Winkler2020}. There are also other ways to deal with the inhomogeneous Dirichlet boundary condition. In \cite{Kunoth2002,Kunoth2005,Kunoth2009} elliptic Dirichlet boundary control problems are studied in the energy space setting using wavelet schemes for the spatial discretization and using a Lagrange multiplier for the inhomogeneous Dirichlet boundary condition.

Dirichlet boundary control problems are of great interest for applications in fluid dynamics; see, for example, \cite{Fursikov_Gunzburger_Hou_NS_SICON_1998,FGH,Gunzburger_Hou_Svobodny_NS_M2AN_1991,Gunzburger_Hou_Svobodny_Drag_SICON_1992,Hou_Ravindran_Penalized_SICON_1998,Hou_Ravindran_Penalty_SISC_1999,delosReyesKunisch2005,John_Wachsmuth_DBC_NFAO_2009,Ravindran_NS_M2AN_2017}. Although many numerical algorithms and simulation results can be found in the literature, there are very few well-posedness, regularity, and numerical analysis results for Dirichlet boundary control problems for fluid flows in polygonal domains.

In this work, we study Dirichlet Stokes flow control problems in 2D polygonal domains using both $ L^2 $ and $ H^{1/2} $ for the control spaces.  We give precise well-posedness and regularity results for both problems, and show that the $L^2 $ regularized optimal control can be discontinuous at the corners of a convex domain.  We prove higher regularity for the energy space control problem.  We also develop a finite element method for both problems, and prove a prior error estimates for the energy space problem.

Below, we give precise formulations of the Dirichlet Stokes control problems we consider and give a brief overview of related work.

Let $\Omega\subset\mathbb{R}^2$  be an open bounded domain with {polygonal} boundary $\Gamma$. We let $H^{m}(\Omega)$ denote the standard Sobolev space with norm $\|\cdot\|_{m,\Omega}$ and seminorm $|\cdot|_{m,\Omega}$, and we use bold font to denote vector valued spaces.  Set ${\bm H}^{m}(\Omega) = [H^{m}(\Omega)]^2$ and ${\bm  H}_0^1(\Omega) =\{{\bm v}\in {\bm  H}^1(\Omega);\ {\bm  v} = 0 \ \textup{on} \ \Gamma \}$. We denote the $L^2$-inner products on  ${\bm  L}^2(\Omega)$, $ L^2(\Omega)$, $ {\bm  L}^2(\Gamma)$ and $L^2(\Gamma)$ by
\begin{eqnarray*}
	({\bm  y},{\bm  z}) = \sum_{j=1}^2 \int_{\Omega}  y_j z_j,\qquad (p,q) =  \int_{\Omega} pq,\qquad ( {\bm  y},{\bm  z})_\Gamma = \sum_{j=1}^2\int_{\Gamma}  y_j  z_j,\qquad (u,v)_\Gamma = \int_\Gamma u v.
\end{eqnarray*}
We use $\langle\cdot,\cdot\rangle$ to denote the duality product between $H^{-s}(\Omega)$ and $H^s(\Omega)$.  We let $H^{s}(\Gamma)$ denote the space of traces of $H^{s+1/2}(\Omega)$ for $0<s<3/2$, and we note that $H^{s}(\Gamma)$ for $1/2 < s<3/2$ is given by $H^{s}(\Gamma) = \{ u \in \Pi_{i=1}^m H^{s}(\Gamma_i) : u \in C(\Gamma) \}$, see \cite[Theorem 1.5.2.8]{Grisvard1985}. (This definition does not make sense for $ s=3/2 $.)  For $ 0 < s < 3/2 $, we use $\langle\cdot,\cdot\rangle_\Gamma$ to denote the duality product between $H^{-s}(\Gamma)$ and $H^{s}(\Gamma)$.

For the Stokes problem, we use the standard spaces
\begin{eqnarray*}
	{\bm  H}(\text{div}; \Omega) = \{{\bm v} \in {\bm  L}^2(\Omega),\quad \nabla\cdot {\bm  v} \in  L^2(\Omega)\},  \quad  L_0^2(\Omega) = \left\{p \in L^2(\Omega),\quad  (p,1) = 0\right\},
\end{eqnarray*}
as well as the velocity spaces (see \cite[Section 2.1]{Raymond2007})
\begin{eqnarray*}
	{\bm  V}^s(\Omega) = \{{\bm y}\in {\bm  H}^s(\Omega):\ \nabla \cdot {\bm  y} = 0, \  \ \langle{\bm  y}\cdot{\bm  n},1\rangle_\Gamma=0\},  \quad  s\geqslant 0,
\end{eqnarray*}
which are Banach spaces with the ${\bm  H}^s(\Omega)$ norm.  For $0\leqslant s <3/2$, define
\begin{eqnarray*}
	{\bm  V}^s(\Gamma) =  \{{\bm u}\in {\bm  H}^s(\Gamma):\ ({\bm  u}\cdot{\bm  n},1)_\Gamma = 0\},
\end{eqnarray*}
and let ${\bm  V}^{-s}(\Gamma)$ denote the dual space.% For the definition of $H^s(\Gamma)$ for $1<s<3/2$ on polygonal domains, see \cite[Remark 2.3]{APel_Mateos_Pfefferer_Arnd_DBC_MCRF_2018}.

For the control problem, consider a target state ${\bm  y}_d\in {\bm  H}$, a velocity penalty space ${\bm  H}\hookrightarrow {\bm  L}^2(\Omega)$, and a control penalty space ${\bm  U}\hookrightarrow {\bm  V}^0(\Gamma)$.  Let $ \alpha > 0 $ denote a Tikhonov regularization parameter, and consider the optimal control problem
\begin{equation}\label{P}
\min_{{\bm u}\in {\bm  U}} J({\bm  u})=\frac{1}{2}\|{\bm  y}_{\bm u}-{\bm  y}_d\|^2_{\bm  H} + \frac{\alpha}{2}\|\bm u\|_{\bm  U}^2,
\end{equation}
where ${\bm  y}_{\bm u}\in {\bm   V}^0(\Omega)$ is the unique solution  (either in the transposition sense, see  Definition \ref{D2.1} below, or standard variational solution) of the Stokes system
\begin{align}\label{StateEquationf}
\begin{split}
-\Delta{\bm  y}+\nabla p &={\bm  f} \quad  \text{in}\ \Omega,\\
\nabla\cdot{\bm  y}&=0 \quad  \text{in}\ \Omega,\\
{\bm  y}&={\bm  u} \quad  \text{on}\  \Gamma,\\
(p,1)&=0.
\end{split}
\end{align}

We note that similar Dirichlet control problems with various choices of the spaces $ \bm H $ and $ \bm U $ have been considered in the literature for both the Stokes and Navier-Stokes equations.  The choices ${\bm  H}={\bm  L}^4(\Omega)$ and ${\bm  U} = {\bm  V}^1(\Gamma)$ were used in the early work \cite{Gunzburger_Hou_Svobodny_NS_M2AN_1991}.  In \cite{delosReyesKunisch2005}, the spaces ${\bm  H} = {\bm  V}^1(\Omega)$ and ${\bm U} = {\bm  L}^2(\Gamma)$ are used for the objective functional; however, the optimal control problem looks for admissible optimal controls in ${\bm U}_\mathrm{ad} = {\bm   V}^{1/2}(\Gamma)$, which is the natural space for the controls to obtain a variational solution of the state equation \eqref{StateEquationf}.  In \cite{John_Wachsmuth_DBC_NFAO_2009}, the authors consider a smooth domain and choose ${\bm  H}= {\bm  V}^0(\Omega)$ and ${\bm  U}={\bm  V}^0(\Gamma)$. We show in polygonal domains that this approach leads to optimal controls that are \emph{discontinuous} at the corners; see \Cref{Stokes_L2} for the well-posedness and regularity results. However, a better regularity result for these spaces is obtained if we consider tangential control, i.e., we impose the condition $\bm u\cdot\bm n=0$ pointwise instead of $( \bm u\cdot \bm n,1)_\Gamma =0$, see \cite{GongHuMateosSinglerZhang2018} for more details.

Here we focus on the energy space method for the problem in polygonal domains. In \Cref{Stokes_enegy_space} we formulate the Dirichlet boundary control problem of Stokes equation with velocity space ${\bm  H}= {\bm  V}^0(\Omega)$ and control space ${\bm  U}=\bm V^{1/2}(\Gamma)$,  and we derive the first order optimality condition by using the Steklov-Poincar\'e operator. Higher regularity of the solutions is shown compared to the $\bm L^2(\Gamma)$ setting.  In \Cref{FEM_in_E} we give finite element approximations and error estimates for the energy space method. Numerical experiments are carried out in \Cref{Numer_ex} for both choices ${\bm  U}={\bm  V}^0(\Gamma)$ and ${\bm  U}=\bm V^{1/2}(\Gamma)$ in both convex and nonconvex polygonal domains.

\begin{remark}\label{Remark1}
	For $\bm f\in \bm H^{-1}(\Omega)$, if we let $\bm y^f\in \bm V^1(\Omega)\cap \bm H^1_0(\Omega)$ be the unique solution of \eqref{StateEquationf} for $\bm u = 0$ and redefine $\bm y_d:=\bm y_d-\bm y^f$, we can formulate an equivalent problem to \eqref{P} with $\bm f =0$, in the sense that the optimal control will be the same for both problems and the optimal states will differ by $\bm y^f$. Thus, in the rest of the work, we assume $\bm f =0$.
\end{remark}

\begin{remark}
	The introduction of control constraints does not lead to any differences in the regularity of the solutions or the rates of convergence. Control constrained problems can be treated by means of variational inequalities instead of equalities and there are plenty of examples about this in the literature. We focus on the unconstrained problem in order to avoid additional technicalities.
\end{remark}
\section{Regularity results}\label{S2}

We first summarize the result we presented in \cite{GongHuMateosSinglerZhang2018} about the concept of solution for Dirichlet data in ${\bm  V}^0(\Gamma)$ and its precise regularity.

Definitions of very weak solutions of the Stokes and Navier-Stokes equations for data in ${\bm  V}^0(\Gamma)$ and even ${\bm  V}^{-1/2}(\Gamma)$ have been given for convex polygonal domains and smooth domains; see \cite[Appendix A]{Conca1987_II}, \cite{Marusic-Paloca2000}, \cite[Appendix A]{Raymond2007}, and \cite[Definition 2.1]{John_Wachsmuth_DBC_NFAO_2009}.  In \cite{GongHuMateosSinglerZhang2018}, we showed how to extend the concept to problems posed on nonconvex polygonal domains for data in ${\bm  V}^{s}(\Gamma)$ with some negative $s$, and  we also proved that the optimal regularity ${\bm  V}^{s+1/2}(\Omega)$ expected for the solution can be achieved. In \cite{MoussaouiZine1998} a similar result is provided for convex polygonal domains, but only suboptimal regularity ${\bm  V}^{s+1/2-\varepsilon}(\Omega)$ for $ \varepsilon>0$ is proved.

To introduce the definition of solution of the state equation, we first need some results about the following \emph{compressible} Stokes equation:
\begin{equation}\label{StokesRegularity}
\begin{split}
-\Delta{\bm  z}+\nabla q &={\bm  g} \quad  \text{in}\ \Omega,\\
\nabla\cdot{\bm  z}&=h \quad  \text{in}\ \Omega,\\
{\bm  z}&=0 \quad  \text{on}\  \Gamma,\\
(q,1)&=0.
\end{split}
\end{equation}
For {data $({\bm g},h)\in \bm H^{-1}(\Omega)\times \bm L^2_0(\Omega)$,} this problem must be understood in the weak sense: Find $({\bm  z}_{{\bm g},h}, q_{{\bm g},h})\in {\bm H}^1_0(\Omega)\times L^2_0(\Omega)$ satisfying
\begin{align*}
	(\nabla\bm z_{{\bm g},h}, \nabla\bm \zeta) - (q_{{\bm g},h},\nabla\cdot \bm\zeta) &= (\bm g, \bm \zeta)\ \quad\forall {\bm \zeta}\in {\bm H}^1_0(\Omega), \\
	(\chi,\nabla\cdot\bm z_{{\bm g},h}) &= (h,\chi)\ \quad\forall {\chi}\in L^2_0(\Omega).
\end{align*}

Following \cite{Dauge1989}, we define the singular exponent $\xi$ as the real part of the smallest root different from zero  of the equation
	\begin{equation}\label{singular_ex}
	\sin^2(\lambda\omega)-\lambda^2\sin^2\omega=0,
	\end{equation}
	where $\omega$ denotes the greatest interior angle of $\Gamma$.
A numerical computation of $\xi$ shows, cf. \cite[Figure 2]{Dauge1989}, that $\xi\in(0.5,4]$, $\omega\mapsto\xi$ is strictly decreasing, $\xi >\pi/\omega$ if $\omega <\pi$, and $\xi <\pi/\omega$ if $\omega >\pi$.
Let
\begin{align}\label{def_s_star}
s^\star  =\min\{\xi-1/2,1/2\}.
\end{align}

\begin{theorem}\label{Dauge55a}\cite[Theorem 5.5 (a)]{Dauge1989}
Let $s$ satisfy $-1/2< s<s^\star$.  If $\bm g\in \bm H^{s-1/2}(\Omega)$ and $h\in H^{s+1/2}(\Omega)\cap L^2_0(\Omega)$, then \Cref{StokesRegularity} has a unique solution $({\bm  z}_{{\bm g},h}, q_{{\bm g},h}) \in [{\bm  H}^{3/2+s}(\Omega)\cap {\bm  H}_0^1(\Omega)]\times [H^{1/2 + s}(\Omega)\cap L_0^2(\Omega)]$. Moreover, we have
	\begin{align}\label{Continuity}
		\|{\bm  z}_{{\bm g},h}\|_{{\bm H}^{3/2+s}(\Omega)}+\|q_{{\bm g},h}\|_{H^{1/2+s}(\Omega)/\mathbb{R}} \leqslant C \big(\|{\bm  g}\|_{{\bm H}^{s-1/2}(\Omega)} + \|h\|_{H^{s+1/2}(\Omega)/\mathbb R}\big).
	\end{align}
\end{theorem}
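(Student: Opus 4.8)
Although this statement is quoted from \cite[Theorem 5.5 (a)]{Dauge1989}, here is how I would reconstruct its proof. The argument has three stages --- base-level well-posedness, removal of the divergence datum $h$, and the corner regularity shift for the resulting incompressible Stokes system --- of which the third is the substantive one. For the first stage, the weak formulation stated above is a saddle-point problem in $\bm H^1_0(\Omega)\times L^2_0(\Omega)$: the Dirichlet form $(\nabla\bm z,\nabla\bm\zeta)$ is coercive on $\bm H^1_0(\Omega)$ by Poincar\'e's inequality, and $b(\bm v,p)=-(p,\nabla\cdot\bm v)$ satisfies the inf-sup (Ladyzhenskaya--Babu\v{s}ka--Brezzi) condition on any bounded Lipschitz --- in particular polygonal --- domain. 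Brezzi's theory then yields a unique $(\bm z_{\bm g,h},q_{\bm g,h})\in\bm H^1_0(\Omega)\times L^2_0(\Omega)$ for every $\bm g\in\bm H^{-1}(\Omega)$ and $h\in L^2_0(\Omega)$, so uniqueness in the smaller space asserted in the theorem is automatic; everything else is a regularity statement together with the bound \eqref{Continuity}.

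For the second stage I would lift the divergence constraint. Using a right inverse of the divergence operator (a Bogovskii-type operator, or solving $\Delta\varphi=h$ on $\Omega$ with a boundary correction), I construct $\bm w\in\bm H^{3/2+s}(\Omega)\cap\bm H^1_0(\Omega)$ with $\nabla\cdot\bm w=h$ and $\|\bm w\|_{\bm H^{3/2+s}(\Omega)}\le C\,\|h\|_{H^{1/2+s}(\Omega)/\mathbb R}$; on the polygon this lifting bound is exactly where the restriction $s<1/2$ is first used, since it keeps $1/2+s<1$ so that the auxiliary problem stays below the $H^2$ threshold and no vertex compatibility conditions appear. Writing $\bm z_{\bm g,h}=\bm w+\tilde{\bm z}$ turns \Cref{StokesRegularity} into the homogeneous Stokes system for $(\tilde{\bm z},q_{\bm g,h})$ with zero boundary data and right-hand side $\bm g+\Delta\bm w\in\bm H^{s-1/2}(\Omega)$, and it remains to prove $\tilde{\bm z}\in\bm H^{3/2+s}(\Omega)$ and $q_{\bm g,h}\in H^{1/2+s}(\Omega)$ with the matching estimate.

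The third stage is the heart: the regularity shift for the homogeneous Stokes system on the polygon. Away from the vertices, interior and boundary elliptic regularity for the Stokes operator gives $\tilde{\bm z}\in\bm H^{3/2+s}$ and $q_{\bm g,h}\in H^{1/2+s}$ locally with the desired bounds. Near a vertex $c_j$ with opening angle $\omega_j$, I would localize by a cutoff, pass to polar coordinates, and invoke the Kondrat'ev--Mellin machinery for corner domains: the pair $(\tilde{\bm z},q_{\bm g,h})$ decomposes into a regular part in $\bm H^{3/2+s}\times H^{1/2+s}$ plus a finite sum of singular terms $r^{\lambda}(\log r)^{\ell}$ times smooth angular profiles, the exponents $\lambda$ being the roots of the Stokes corner operator pencil, whose characteristic equation is precisely \eqref{singular_ex} with $\omega=\omega_j$. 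A velocity singular term $r^{\lambda}\bm\Phi(\theta)$ belongs to $\bm H^{3/2+s}$ near $c_j$ if and only if $\operatorname{Re}\lambda>1/2+s$, and the companion pressure term $r^{\lambda-1}\Psi(\theta)$ lies in $H^{1/2+s}$ under the same condition. Since $s<s^\star\le\xi-1/2$ by \eqref{def_s_star}, that is $1/2+s<\xi$, and $\xi$ is the real part of the smallest nonzero root, no genuine singular exponent lies in the critical strip $\{\,0<\operatorname{Re}\lambda\le 1/2+s\,\}$; hence no singular term must be subtracted and $(\tilde{\bm z},q_{\bm g,h})$ inherits the regular-part regularity at each vertex. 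Gluing the local estimates with a partition of unity subordinate to $\{c_j\}$ and adding back $\bm w$ produces \eqref{Continuity}.

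I expect the main obstacle to be the corner analysis of the third stage: writing down the Mellin symbol of the Stokes system at a planar corner, verifying that \eqref{singular_ex} is indeed its characteristic equation, and --- the delicate point --- proving that the operator pencil is an isomorphism between the weighted Sobolev spaces attached to the weight line associated with $\bm H^{3/2+s}$, which is what makes the regular/singular decomposition valid with estimates and, under $s<s^\star$, collapses it to ``no singularities''. The cap $s^\star\le 1/2$ throughout serves to keep the velocity strictly below $\bm H^2(\Omega)$ --- equivalently, to demand only $\bm g\in\bm H^{s-1/2}(\Omega)$ with negative smoothness index --- which is exactly the regime in which a single weighted a priori estimate suffices and no corner compatibility conditions enter. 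By comparison the first two stages are routine.
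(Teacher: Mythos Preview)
The paper does not provide its own proof of this theorem: it is stated purely as a citation of \cite[Theorem 5.5(a)]{Dauge1989}, with no argument given. There is therefore nothing in the paper to compare your proposal against. You already recognized this in your opening sentence.

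As for the content of your sketch, it is a reasonable outline of how Dauge's result is obtained: the reduction to $h=0$ via a divergence lifting, followed by the Kondrat'ev--Mellin corner analysis showing that under $s<s^\star\le\xi-1/2$ no singular exponent falls in the critical strip, is indeed the structure of the argument in \cite{Dauge1989}. One small caution: in stage two you suggest solving $\Delta\varphi=h$ with a boundary correction to build the lifting $\bm w$; on a polygon this route would itself run into corner regularity issues, and a direct Bogovskii-type construction (which maps $H^{1/2+s}(\Omega)\cap L^2_0(\Omega)$ boundedly into $\bm H^{3/2+s}(\Omega)\cap\bm H^1_0(\Omega)$ for the relevant range of $s$) is the cleaner choice. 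Otherwise your identification of stage three as the substantive part, and of the restriction $s<1/2$ as keeping the analysis below the $\bm H^2$ threshold where compatibility conditions would enter, is accurate.
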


Notice that although the pressure is uniquely determined as a function with the condition $(q,1)=0$, the norm must be taken modulo constant functions. It is important to note that \Cref{Dauge55a} only holds for  $s<1/2$. This means, even in convex  domains one cannot expect in general to have ${\bm  H}^2(\Omega)$ regularity of ${\bm  z}$.

	The ${\bm  H}^2(\Omega)$ regularity of $ \bm z $ can be obtained by requiring an additional condition on the divergence of ${\bm  z}$.  For example if $h\in H^1_0(\Omega)$ with $(h,1)=0$, then the above result holds for $ s = s^\star $ (this follows from \cite[Theorem 5.5(c)]{Dauge1989}, or the early reference \cite{Kellog-Osborn1976} for convex polygonal domains). This implies in a convex domain we have ${\bm  z}\in {\bm  H}^2(\Omega)$.
	
	This ${\bm  H}^2(\Omega)$ regularity result was used in \cite{Conca1987_II, MoussaouiZine1998} to define very weak solutions in polygonal domains using $h\in H^1_0(\Omega)$ as a test function.  Although this approach does enable us to define the transposition solution, it does not lead to optimal regularity results for the solution of the Dirichlet control problem.
%\end{remark}

Later, we also require a regularity result for the case $h\equiv 0$ in a convex domain. Let $(\bm z(\bm g),q(\bm g))$ denote the solution of \eqref{StokesRegularity} for $h=0$, i.e., $\bm z(\bm g) = \bm z_{\bm g,0}$ and $q(\bm g)=q_{\bm g,0}$.
\begin{theorem}\label{Dauge55b}\cite[Theorem 5.5(b)(c)]{Dauge1989}
	Suppose $\bm g \in \bm H^{t-1}(\Omega)$ for some $-1\leqslant t<\xi$ and $h=0$. {If $ \Omega $ is convex,} then the \emph{incompressible} Stokes equation
\begin{equation}
\begin{split}
-\Delta{\bm  z}+\nabla q &={\bm  g} \quad  \text{in}\ \Omega,\\
\nabla\cdot{\bm  z}&=0 \quad  \text{in}\ \Omega,\\
{\bm  z}&=0 \quad  \text{on}\  \Gamma,\\
(q,1)&=0
\end{split}
\end{equation}
has a unique solution
 $\bm z(\bm g)\in\bm V^{t+1}(\Omega)\cap \bm H^1_0(\Omega)$, $q(\bm g)\in H^t(\Omega)\cap L^2_0(\Omega)$, which satisfies
	\[
	\|\bm z(\bm g)\|_{\bm H^{1+t}(\Omega)}+\|q(\bm g)\|_{H^{t}(\Omega)} \leqslant C \|\bm g\|_{\bm H^{t-1}(\Omega)}.
	\]
\end{theorem}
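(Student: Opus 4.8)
The plan is to split the range $-1\leqslant t<\xi$ into three pieces and bootstrap from the classical case. First I would dispose of $t=0$: for $\bm g\in\bm H^{-1}(\Omega)$ the stationary Stokes system is the standard saddle point problem, with the form $(\nabla\cdot,\nabla\cdot)$ coercive on $\bm H^1_0(\Omega)$ and the pressure--divergence form $(q,\nabla\cdot\bm v)$ satisfying the Babu\v{s}ka--Brezzi inf-sup condition on a bounded Lipschitz domain, so there is a unique $(\bm z(\bm g),q(\bm g))\in\bm H^1_0(\Omega)\times L^2_0(\Omega)$ with $\|\bm z(\bm g)\|_{1,\Omega}+\|q(\bm g)\|_{0,\Omega}\leqslant C\|\bm g\|_{-1,\Omega}$; since $\bm z(\bm g)$ is divergence free and vanishes on $\Gamma$ it belongs to $\bm V^1(\Omega)$, and the uniqueness obtained here also yields uniqueness in the other ranges.

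Next, for $0<t<\xi$ I would run a localization and corner-regularity argument. In the interior and near the open edges the Stokes system is elliptic in the Agmon--Douglis--Nirenberg sense, so local interior and boundary regularity give $\bm H^{1+t}$ control there. Near a vertex $V_i$ of opening $\omega_i$ I would pass to polar coordinates $(r_i,\theta_i)$ and use the Kondrat'ev--Mellin calculus: the Mellin transform in $r_i$ turns the Stokes system on the infinite sector into a holomorphic family, in the spectral parameter $\lambda$, of two-point boundary value problems for an ODE system on $(0,\omega_i)$, whose non-invertible values of $\lambda$ are exactly the roots of \eqref{singular_ex} with $\omega=\omega_i$. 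This yields a decomposition $\bm z(\bm g)=\bm z_{\mathrm{reg}}+\sum_{0<\operatorname{Re}\lambda_{i,j}\leqslant t}c_{i,j}\,r_i^{\lambda_{i,j}}\bm\Phi_{i,j}(\theta_i)$ with $\bm z_{\mathrm{reg}}\in\bm H^{1+t}(\Omega)$, together with a companion decomposition for $q(\bm g)$. The crux is that convexity forces $\omega_i<\pi$, so by the monotonicity of $\omega\mapsto\xi$ and the property $\xi>\pi/\omega$ for $\omega<\pi$ we get $\xi>\pi/\omega>1$; hence when $t<\xi$ the strip $(0,t]$ contains no root of \eqref{singular_ex} and the singular sums are empty. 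Therefore $\bm z(\bm g)\in\bm H^{1+t}(\Omega)$, and being divergence free with vanishing boundary trace it lies in $\bm V^{1+t}(\Omega)$, while $q(\bm g)\in H^t(\Omega)\cap L^2_0(\Omega)$; the estimate follows from the open mapping theorem applied to the closed solution operator $\bm H^{t-1}(\Omega)\to\bm H^{1+t}(\Omega)\times H^t(\Omega)$, or simply by summing the finitely many local bounds.

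Finally, for $-1\leqslant t<0$ I would define the solution by transposition against the solutions just constructed. Given a test field $\bm\eta\in\bm H^{-1-t}(\Omega)$, let $(\bm\phi,\psi)$ solve the Stokes system with right-hand side $\bm\eta$ (the Dirichlet Stokes operator being self-adjoint); since $0<-t\leqslant1<\xi$, the previous step applies and $\|\bm\phi\|_{1-t,\Omega}\leqslant C\|\bm\eta\|_{-1-t,\Omega}$. Setting $\langle\bm z(\bm g),\bm\eta\rangle:=\langle\bm g,\bm\phi\rangle$ is legitimate because $\bm g\in\bm H^{t-1}(\Omega)=(\bm H^{1-t}(\Omega))'$, and the bound $|\langle\bm z(\bm g),\bm\eta\rangle|\leqslant C\|\bm g\|_{t-1,\Omega}\|\bm\eta\|_{-1-t,\Omega}$ shows $\bm z(\bm g)\in\bm H^{1+t}(\Omega)$ with the stated estimate; an analogous pairing against $\nabla\chi$ with $\chi\in H^{-t}(\Omega)\cap L^2_0(\Omega)$ defines $q(\bm g)\in H^t(\Omega)\cap L^2_0(\Omega)$. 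I would then check directly that $(\bm z(\bm g),q(\bm g))$ solves the incompressible Stokes system in the distributional sense, is divergence free, has vanishing trace, and is the unique such pair.

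The hard part will be the corner analysis in the second step: carrying out the Mellin reduction rigorously, identifying \eqref{singular_ex} as the precise indicial equation of the Stokes operator in a wedge, estimating the singular functions in the fractional-order spaces $\bm H^{1+t}(\Omega)$ (in particular their behavior as $t\uparrow\xi$), and, above all, establishing the analytic fact on which the whole statement hinges, namely that a convex corner has first Stokes singular exponent strictly greater than $1$. This is precisely the content of \cite[Theorem 5.5]{Dauge1989}, to which one ultimately defers for the details.
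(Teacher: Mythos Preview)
The paper gives no proof of this statement; it is quoted directly from \cite[Theorem 5.5(b)(c)]{Dauge1989} with no accompanying argument. Your sketch of the Kondrat'ev--Mellin corner analysis is indeed the machinery behind Dauge's result, and you correctly defer the hard analytic content to that reference at the end, so there is nothing in the paper to compare against beyond the citation itself.

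One clarification on where convexity enters. In your step for $0<t<\xi$, the emptiness of the singular sum follows from $t<\xi$ alone, by the very definition of $\xi$ as the real part of the smallest nonzero root of \eqref{singular_ex}; convexity plays no role there. Convexity is genuinely needed in your transposition step for $-1\leqslant t<0$: applying the forward regularity result at exponent $-t$ requires $-t<\xi$, and to reach $t=-1$ you must have $\xi>1$, which holds exactly when the largest interior angle is below $\pi$. Your chain ``$0<-t\leqslant 1<\xi$'' invokes this correctly; only the attribution in the preceding paragraph is misplaced. A further technicality in that duality step: for $t<-1/2$ the pairing $\langle\bm g,\bm\phi\rangle$ with $\bm g\in\bm H^{t-1}(\Omega)=(\bm H^{1-t}_0(\Omega))'$ and $\bm\phi\in\bm H^{1-t}(\Omega)\cap\bm H^1_0(\Omega)$ is not automatic, since $\bm\phi\notin\bm H^{1-t}_0(\Omega)$ in general (its normal derivative need not vanish). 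This is typically handled either by restricting the admissible data or by carrying out the transposition in weighted Kondrat'ev spaces, as in \cite{Dauge1989}.
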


%Notice that for ${\bm g}\in{\bm H}^{-1}(\Omega)$ and $h\equiv 0$, there are two equivalent ways to define the variational solution of \eqref{StokesRegularity} or \eqref{StokesRegularityIncompressible}, depending on whether we test the first equation with functions in ${\bm H}^1_0(\Omega)$ or with functions in ${\bm V}^1_0(\Omega)$.
%\begin{eqnarray*}
%	% \nonumber % Remove numbering (before each equation)
%	\mbox{ Find }(\bm z, q)\in {\bm H}^1_0(\Omega)\times L^2_0(\Omega) \mbox{ s.t.} \\
%	(\nabla\bm z, \nabla\bm \eta) - (q,\nabla\cdot \eta) &=& \langle \bm g, \bm \eta\rangle\ \forall {\bm \eta}\in {\bm H}^1_0(\Omega) \\
%	(\pi,\nabla\cdot\bm z) &=& 0\ \forall {\bm \pi}\in L^2(\Omega)/\mathbb R
%\end{eqnarray*}
%or
%\begin{eqnarray*}
%	% \nonumber % Remove numbering (before each equation)
%	\mbox{ Find }\bm z\in {\bm V}^1_0(\Omega) \mbox{ s.t.} \\
%	(\nabla\bm z, \nabla\bm \eta)  &=& \langle \bm g, \bm \eta\rangle\ \forall {\bm \eta}\in {\bm V}^1_0(\Omega)
%\end{eqnarray*}
%Once ${\bm z}$ is obtained, from \cite[Chapter 1, Lemma 2.1]{GR}, we know that we can recover the pressure $q$ as the unique function in $L^2_0(\Omega)$ such that $\nabla q=(I-\Pi)({\bm f} + \Delta{\bm z})$, where $\Pi$ is the orthogonal projector from ${\bm L}^2(\Omega)$ onto the space ${\bm V}^0_n(\Gamma) = \{u\in {\bm V}^0(\Gamma):\ {\bm u}\cdot{\bm n} = 0\mbox{ in }H^{-1/2}(\Gamma)\}$.

Below, we derive the weak variational form for the state equation (\ref{StateEquationf}). Since the problem is linear, we may decompose the solution into the contributions from the right hand side ${\bm  f}$ and the Dirichlet boundary data ${\bm  u}$. The existence of a unique classical variational solution for ${\bm  f}\in {\bm  L}^2(\Omega)$ is standard and so we may set ${\bm  f}={\bm 0}$, see \Cref{Remark1}.

We use interpolation below to give precise regularity results for the state equation (with $ {\bm  f}={\bm 0} $), and therefore we define very weak solutions in the case ${\bm  u}\in {\bm  V}^{-s}(\Gamma)$ for $0<s<s^\star$.  Elements of this space do not necessarily satisfy any condition analogous to $({\bm  u}\cdot {\bm  n},1 )_\Gamma=0$.  In order to account for the constants, we follow \cite[Eq. (2.2)]{Raymond2007} and for $({\bm  z},q)\in {\bm  H}^{3/2+s}(\Omega)\times H^{1/2+s}(\Omega)$ with $s>0$ we define the constant
\begin{equation}
\label{Ray2.2}\lambda({\bm  z},q) = \frac{1}{|\Gamma|}( \partial_{\bm  n} {\bm  z}\cdot {\bm  n}-q,1)_\Gamma.
\end{equation}
This constant satisfies
\[\|\partial_{\bm  n} {\bm  z}-q {\bm  n}\|_{L^2(\Gamma)/\mathbb R} = \|\partial_{\bm  n} {\bm  z}-q {\bm  n}-\lambda({\bm  z},q){\bm  n}\|_{L^2(\Gamma)},\]
and we have
\[\partial_{\bm  n} {\bm  z}-(q+\lambda({\bm  z},q)) {\bm  n}\in \bm V^0(\Gamma).\]
This fact, trace theory, and \eqref{Continuity} give that for $0 < s<1/2$ we have
\begin{equation}\label{NormalTraceContinuity}
\|\partial_{\bm  n} {\bm  z}_{{\bm g}, h}-(q_{{\bm g}, h} +\lambda({\bm  z}_{{\bm  g}, h},q_{{\bm g}, h})) {\bm  n}\|_{H^s(\Gamma)}\leqslant C \big(\|{\bm  g}\|_{{\bm H}^{s-1/2}(\Omega)} + \|h\|_{H^{s+1/2}(\Omega)/\mathbb R}\big).
\end{equation}
This allows us to give the following well-defined notion of transposition solution for the state equation (again, with $ \bm{f} = \bm{0} $).

\begin{definition}\label{D2.1}Suppose $0\leqslant s<s^\star$ and ${\bm  u}\in {\bm  V}^{-s}(\Gamma)$. We say that ${\bm  y}_{\bm  u}\in {\bm  V}^0(\Omega)$, $p_{\bm u}\in \left(H^{1}(\Omega)\cap L^2_0(\Omega)\right)'$ is a solution in the transposition sense of
\begin{align}\label{StateEquation}
\begin{split}
-\Delta{\bm  y}+\nabla p &={\bm  0} \quad  \text{in}\ \Omega,\\
\nabla\cdot{\bm  y}&=0 \quad  \text{in}\ \Omega,\\
{\bm  y}&={\bm  u} \quad  \text{on}\  \Gamma,\\
(p,1)&=0,
\end{split}
\end{align}
if
	\begin{equation}\label{veryWeakForm}
	({\bm  y}_{\bm  u},{\bm  g}) - \langle {p}_{\bm u},{h}\rangle = \langle{\bm  u},{-\partial_{\bm  n}{\bm  z}_{{\bm g},h}+(q_{{\bm g},h}+\lambda({\bm  z}_{{\bm g},h},q_{{\bm g},h})){\bm  n}}\rangle_\Gamma,
	\end{equation}
	for all ${\bm  g}\in {\bm  L}^2(\Omega)$  and $h\in H^1(\Omega)\cap L^2_0(\Omega)$,
	where $({\bm  z}_{{\bm g},h},q_{{\bm g},h})\in {\bm  H}^1_0(\Omega) \times L^2_0(\Omega)$ is the unique  solution of \eqref{StokesRegularity} and $\lambda({\bm  z}_{{\bm g},h},q_{{\bm g},h})$ is the constant given in \eqref{Ray2.2}.
\end{definition}
This definition can be formally obtained by integrating by parts twice in the equation and also once in the divergence free condition.  We note that this definition can be written in different ways: using two separate equations tested by $ \bm g $ and $ h $ (see \cite{John_Wachsmuth_DBC_NFAO_2009} or \cite{Raymond2007}), or as one equation (see \cite{Conca1987_II} or \cite{Marusic-Paloca2000}).

Furthermore, this definition can be rewritten in different forms when $ \bm u $ is more regular.  First, if ${\bm  u}\in {\bm  V}^0(\Gamma)$, then $(\bm u,\lambda\bm n)_\Gamma = 0$ for every constant $\lambda\in\mathbb{R}$ and therefore \eqref{veryWeakForm} can be written as
\begin{align}\label{veryWeakForm0}
({\bm  y}_{\bm  u},{\bm  g}) - \langle{p}_{\bm u},{h}\rangle = ({\bm  u},{-\partial_{\bm  n}{\bm  z}_{{\bm g},h}+q_{{\bm g},h}{\bm  n}})_\Gamma.
\end{align}
Second, if ${\bm u}\in {\bm V}^{1/2}(\Gamma)$, then the very weak solution is the variational solution of the problem:  Find  $({\bm  y}_{{\bm u}}, p_{{\bm u}})\in {\bm H}^1(\Omega)\times L^2_0(\Omega)$ satisfying
\begin{align}\label{WeakForm}
	\begin{split}
	(\nabla\bm y_{{\bm u}}, \nabla\bm \zeta) - (p_{{\bm u}},\nabla\cdot \bm \zeta) &= 0\ \quad \forall {\bm \zeta}\in {\bm H}^1_0(\Omega), \\
	(\chi,\nabla\cdot\bm y_{{\bm u}}) &= 0\ \quad\forall {\chi}\in L^2(\Omega)/\mathbb R,\\
    \bm y_{\bm u} &= \bm u\quad \mbox{ on }\Gamma.
	\end{split}
\end{align}

Next, we give a regularity result for the state equation \eqref{StateEquation} on polygonal domains from \cite[{Theorem 2.2}]{GongHuMateosSinglerZhang2018}.  We note that an analogous result for smooth domains is found in \cite[Corollary A.1]{Raymond2007}. The limiting cases $s=-1/2$ and $s=3/2$ can be achieved when the domain is smooth; however, this is not possible for polygonal domains.

\begin{theorem}\label{continuous_mapping}
	If ${\bm u}\in {\bm V}^s(\Gamma)$ for $ -s^\star< s <{s^\star+1}$, then the solution of \eqref{StateEquation} satisfies
	\begin{align*}
		{\bm y}_{\bm u}\in {\bm V}^{s+1/2}(\Omega) \:\: \mbox{ and } \:\:
		p_{\bm u}\in
		\begin{cases}
			H^{s-1/2}(\Omega)/\mathbb R&\mbox{ if }s\geqslant 1/2,\\
		    \left(H^{1/2-s}(\Omega)/\mathbb R\right)'&\mbox{ if }s\leqslant 1/2.
		\end{cases}
	\end{align*}
	Moreover, the control-to-state mapping ${\bm u}\mapsto {\bm y}_{\bm u}$ is continuous from ${\bm V}^s(\Gamma)$ to ${\bm V}^{s+1/2}(\Omega)$.
\end{theorem}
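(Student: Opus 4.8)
The plan is to establish \Cref{continuous_mapping} by an interpolation argument anchored at two endpoints, plus a duality/transposition argument to connect the two. First I would treat the regular endpoint: when $\bm u \in \bm V^{1/2}(\Gamma)$, the very weak solution coincides with the variational solution \eqref{WeakForm}, and the standard lifting argument (lift $\bm u$ to a divergence-free $\bm H^1(\Omega)$ function, reduce to the homogeneous problem \eqref{StokesRegularity} with $h=0$ and $\bm g \in \bm H^{-1}(\Omega)$) gives $\bm y_{\bm u} \in \bm V^1(\Omega)$ with continuous dependence. More generally, for $1/2 \le s < s^\star + 1$, I would lift $\bm u \in \bm V^s(\Gamma)$ into $\bm H^{s+1/2}(\Omega)$, subtract the lift, and apply \Cref{Dauge55a} (or \Cref{Dauge55b} in the convex case for the top range of $s$) to the resulting compressible Stokes problem to obtain $\bm y_{\bm u} \in \bm V^{s+1/2}(\Omega)$ and the stated regularity for $p_{\bm u}$ in $H^{s-1/2}(\Omega)/\mathbb{R}$; the constraint $s < s^\star + 1$ is exactly what \Cref{Dauge55a} permits (with $s^\star = \min\{\xi - 1/2, 1/2\}$).

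For the singular endpoint, I would take $\bm u \in \bm V^{-s}(\Gamma)$ with $0 < s < s^\star$ and work from the transposition characterization in \Cref{D2.1}. The identity \eqref{veryWeakForm} expresses $(\bm y_{\bm u}, \bm g) - \langle p_{\bm u}, h\rangle$ as a duality pairing of $\bm u$ against the adjoint data $-\partial_{\bm n}\bm z_{\bm g,h} + (q_{\bm g,h} + \lambda(\bm z_{\bm g,h}, q_{\bm g,h}))\bm n$. The key estimate is \eqref{NormalTraceContinuity}, which bounds the $\bm H^{s}(\Gamma)$-norm of this adjoint boundary term by $C(\|\bm g\|_{\bm H^{s-1/2}(\Omega)} + \|h\|_{H^{s+1/2}(\Omega)/\mathbb{R}})$. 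Combining with $\bm u \in \bm V^{-s}(\Gamma)$ and taking the supremum over $\bm g, h$ yields
\[
\|\bm y_{\bm u}\|_{(\bm H^{s-1/2}(\Omega))'} + \|p_{\bm u}\|_{(H^{s+1/2}(\Omega)/\mathbb{R})'} \le C \|\bm u\|_{\bm V^{-s}(\Gamma)},
\]
which, after identifying $(\bm H^{s-1/2}(\Omega))'$ with the relevant negative-order velocity space intersected with the divergence-free/zero-flux constraints, gives $\bm y_{\bm u} \in \bm V^{-s+1/2}(\Omega)$ and the stated dual-space regularity for $p_{\bm u}$ when $s \le 1/2$. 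One must be careful that the divergence-free and zero-normal-flux conditions persist in the limit, which follows by testing \eqref{veryWeakForm} with appropriate $\bm g$ and $h$ (e.g., $\bm g = \nabla \phi$ and constants).

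With continuous bounds at $s = 1/2$ (say, viewing it from either side) and at $s = -s^\star + \varepsilon$ (and symmetrically near $s = s^\star + 1 - \varepsilon$), I would close the argument by complex or real interpolation of the linear control-to-state operator $\bm u \mapsto \bm y_{\bm u}$ between the scales $\{\bm V^s(\Gamma)\}$ and $\{\bm V^{s+1/2}(\Omega)\}$, using that these spaces form interpolation scales (traces of Sobolev spaces with the zero-flux linear constraint, which is preserved under interpolation). This fills in all intermediate $s \in (-s^\star, s^\star + 1)$ and simultaneously delivers the continuity of the mapping. The main obstacle I anticipate is not the interpolation itself but the bookkeeping at and around $s = 1/2$: the pressure regularity switches character there (from $H^{s-1/2}/\mathbb{R}$ to a dual space), so one has to check that the two descriptions agree at $s = 1/2$ (both reduce to $L^2_0(\Omega)$ up to duality) and that the interpolation of the pressure spaces across this threshold is consistent. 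A secondary technical point is justifying that \Cref{Dauge55a}, stated only for $-1/2 < s < s^\star$, suffices to reach $s$ up to $s^\star + 1$ in \Cref{continuous_mapping} after the $\bm H^{s+1/2}$-lift — i.e., tracking that the shifted index in the compressible problem stays in the admissible range — and invoking \Cref{Dauge55b} in the convex case to cover the top of the interval.
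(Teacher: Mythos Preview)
The paper does not actually prove \Cref{continuous_mapping}: it is quoted verbatim as \cite[Theorem~2.2]{GongHuMateosSinglerZhang2018}, so there is no in-paper proof to compare your proposal against. That said, your strategy is the standard one for such results and is essentially what one expects the cited proof to do: a lifting argument combined with \Cref{Dauge55a} for $s\in(1/2,\,s^\star+1)$, a transposition/duality argument based on \eqref{NormalTraceContinuity} for $s\in(-s^\star,0]$, and interpolation of the control-to-state operator between these ranges to cover $s\in(0,1/2)$, where neither endpoint argument applies directly (the lift would require Dauge's estimate with shifted index $s'=s-1<-1/2$, outside its range).

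Two small points. First, your aside about invoking \Cref{Dauge55b} ``in the convex case for the top range of~$s$'' is unnecessary and slightly misleading: the theorem is stated for general polygonal domains, and after lifting $\bm u\in\bm V^s(\Gamma)$ to $\bm H^{s+1/2}(\Omega)$ you land in the \emph{compressible} Stokes problem with $h=-\nabla\cdot(\text{lift})$, for which \Cref{Dauge55a} (not \Cref{Dauge55b}) applies with index $s'=s-1\in(-1/2,s^\star)$, covering the whole top interval without any convexity assumption. Second, the interpolation step deserves the care you flag: one must check that the constrained scales $\{\bm V^s(\Gamma)\}$ and $\{\bm V^s(\Omega)\}$ are genuine interpolation scales (the zero-flux and divergence-free constraints are closed linear conditions preserved under interpolation), and that the two descriptions of the pressure space match at $s=1/2$. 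These are routine but should be stated.
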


We also recall here the concept of stress force on the boundary as used in \cite{GunzburgerHou1992}. Let $(\bm\psi,\phi)$ be the solution of the \emph{incompressible} Stokes system with source $\bm g\in \bm L^2(\Omega)$ and Dirichlet data $\bm u\in\bm V^{1/2}(\Gamma)$, i.e., $\bm\psi = \bm z(\bm g) + \bm y_{\bm u}$ and $\phi = q(\bm g) + p_{\bm u}$, where $(\bm z(\bm g),q(\bm g))$ is the solution of \eqref{StokesRegularity} with $h=0$, and $ ({\bm  y}_{{\bm u}}, p_{{\bm u}}) $ is the solution of \eqref{WeakForm}.

For $ \bm g $ and $ \bm u $ as above, we define the stress force on the boundary $\bm t(\bm g,\bm u)$ related to $(\bm\psi,\phi)$ to be the unique solution of the variational problem:
\begin{align}\label{def_S}
	\langle\bm t(\bm g,\bm u),\bm \zeta\rangle_\Gamma =
	(\nabla\bm\psi,\nabla\bm \zeta)-(\phi,\nabla\cdot\bm \zeta) - (\bm g,\bm\zeta)\ \quad\forall\bm\zeta\in\bm H^1(\Omega).
\end{align}
Notice that for $\bm u\in \bm V^{r+1/2}(\Gamma)$ with $r>0$, integration by parts shows that
\begin{align}\label{def_S_1}
\bm t(\bm g,\bm u) = \partial_{\bm n}\bm \psi- \phi\bm n.
\end{align}

For $ 0\leqslant s < {s^\star+1}$, we define $\bm E:\bm V^s(\Gamma)\to\bm L^2(\Omega)$ by
\begin{align}\label{def_E}
\bm E \bm u=\bm y_{\bm u}.
\end{align}
 Directly from \eqref{veryWeakForm0} with $h=0$ and \eqref{def_S_1}, the adjoint $\bm E^\star:\bm L^2(\Omega)\to \bm V^{-s}(\Gamma)$ is defined by
 \begin{align}\label{def_Estar}
 \bm E^\star\bm g = -\partial_{\bm n}\bm z(\bm g) + q(\bm g)\bm n = -\bm t(\bm g,\bm 0).
 \end{align}
 By \Cref{continuous_mapping} we know that $\bm E: \bm V^s(\Gamma)\to\bm L^2(\Omega)$ is bounded and hence $\bm E^\star: \bm L^2(\Omega)\to \bm V^{-s}(\Gamma)$ is also bounded. Therefore, $\bm E^\star\bm E: \bm V^s(\Gamma)\to \bm V^{-s}(\Gamma)$ is bounded. Specifically, setting $s=1/2$ gives that for all $\bm u\in \bm H^{1/2}(\Gamma)$ we have
 \begin{align}\label{boundness_of_EstarE}
 \|\bm E^\star\bm E \bm u\|_{\bm H^{-1/2}(\Gamma)} \leqslant C\|\bm u\|_{\bm H^{1/2}(\Gamma)}.
 \end{align}

%For a given control ${\bm u}\in {\bm V}^{0}(\Gamma)$, we will call its related adjoint state to the pair $ (\bm z(\bm y_{\bm u}-\bm y_d),q(\bm y_{\bm u}-\bm y_d))\in {\bm V}^1_0(\Omega) \times L^2_0(\Omega)$. Notice that $\bm z(\bm y_{\bm u}-\bm y_d) \in {\bm  H}^{3/2+s}(\Omega)\cap {\bm  H}_0^1(\Omega)\cap \bm V^0(\Omega)$ and $q(\bm y_{\bm u}-\bm y_d)\in H^{1/2 + s}(\Omega)\cap L_0^2(\Omega)$.

\section{Stokes Dirichlet boundary control in ${\bf V}^0(\Gamma)$}\label{Stokes_L2}

In this section, we investigate the case ${\bm  U} = {\bm  V}^0(\Gamma)$.
For {$\bm y_d\in \bm L^2(\Omega)$} and $\alpha>0$, our control problem reads
\begin{equation}\label{P0}%\tag{$P_0$}
\min_{\bm u\in\bm V^0(\Gamma)}\ J_{0}({\bm u})=\frac 1 2\|{\bm y}_{\bm u}-{\bm y}_d\|^2_{{\bm L}^2(\Omega)}+\frac{\alpha}{2}\|{\bm u}\|^2_{{\bm L}^2(\Gamma)},
\end{equation}
where ${\bm y}_{\bm u}\in {\bm V}^0(\Omega)$ is the solution of the state equation \eqref{veryWeakForm0}. By \eqref{def_E} we have
\begin{align}\label{J0}
	\begin{split}
		J_{0}({\bm u})&=\frac{1}{2}(\bm E^\star\bm E \bm u,\bm u)_\Gamma-(\bm E^\star\bm y_d,\bm u)_\Gamma+ \frac{c_\Omega}2+\frac{\alpha}{2}\|{\bm u}\|^2_{{\bm L}^2(\Gamma)}\\
		&:=	\bm F(\bm u)+ \frac{c_\Omega}2+\frac{\alpha}{2}\|{\bm u}\|^2_{{\bm L}^2(\Gamma)},
	\end{split}
\end{align}
where $c_\Omega = \|\bm y_d\|^2_{\bm L^2(\Omega)}$ and $\bm F(\bm u)=\frac{1}{2}(\bm E^\star\bm E \bm u,\bm u)_\Gamma-(\bm E^\star\bm y_d,\bm u)_\Gamma+ \displaystyle\frac{c_\Omega}{2}$ is the tracking term. It is straightforward to prove that
\begin{align}\label{derivative_F}
\bm F'(\bm u)\bm v = (\bm E^\star\bm E \bm u,\bm v)_\Gamma-(\bm E^\star\bm y_d,\bm v)_\Gamma\ \quad\forall{\bm u\in \bm V^0(\Gamma)\mbox{ and }\bm v\in \bm V^0(\Gamma)}.
\end{align}

Although we are mainly interested in this work in regularization in the energy space ${\bm V}^{1/2}(\Gamma)$, the solution properties of the problem with ${\bm V}^{0}(\Gamma)$-regularization are also of interest in order to more clearly see the advantages and disadvantages of energy space control problem. It is also interesting to see the differences between the Dirichlet boundary control of the Poisson equation (cf. \cite{Apel_Mateos_Pfefferer_Regularity_SICON_2015}) and of the Stokes system.

Using the strict convexity of the functional and the continuity of the control-to-state mapping, which follows from  \Cref{continuous_mapping}, it is standard to prove the existence of a unique solution ${\bm u}_0\in {\bm V}^0(\Gamma)$ of problem ($P_0$).  {We also prove regularity results below, and show that the optimal control can be discontinuous at the corners of a convex polygonal domain.}

{
\begin{theorem}\label{T2.8}Suppose  $\bm y_d\in \bm H^m(\Omega)$ for some $0\leqslant m < s^\star$ and let ${\bm u}_0\in {\bm V}^0(\Gamma)$ be the solution of problem ($P_0$). Then ${\bm u}_0\in {\bm V}^s(\Gamma)$ for all $0\leqslant s<s^\star$ and there exist ${\bm y}_0\in {\bm V}^{s+1/2}(\Omega)$, $p_0\in (H^{1/2-s}(\Omega)\cap L^2_0(\Omega))'$, $\bm z_0\in {\bm V}^{1+t}(\Omega)\cap \bm H^1_0(\Omega)$ and $q_0\in H^t(\Omega)\cap L^2_0(\Omega)$ for all $t\leqslant 1+m$ such that $t<\xi$, that satisfy the state equation
	
	\begin{equation}
	\begin{split}
	-\Delta\bm y_0+\nabla p_0 &=\bm 0 \quad  \text{in}\ \Omega,\\
	\nabla\cdot\bm y_0&=0 \quad  \text{in}\ \Omega,\\
	\bm y_0&=\bm u_0 \quad  \text{on}\  \Gamma,\\
	(p_0,1)&=0,
	\end{split}
	\end{equation}
	the adjoint state equation
	\begin{equation}
	\begin{split}
	-\Delta\bm z_0+\nabla q_0 &=\bm y_0 -\bm y_d \quad  \text{in}\ \Omega,\\
	\nabla\cdot\bm z_0&=0 \quad  \text{in}\ \Omega,\\
	\bm z_0&=0 \quad  \text{on}\  \Gamma,\\
	(q_0,1)&=0,
	\end{split}
	\end{equation}
	and the optimality condition
	\begin{eqnarray}
	(\alpha  {\bm u}_0-(\partial_{\bm n}\bm z_0-q_0{\bm n}),{\bm v})_\Gamma = 0\quad \forall {\bm v}\in {\bm V}^0(\Gamma).\label{opt}
	\end{eqnarray}
	Moreover, there exists $\lambda_0\in\mathbb{R}$ such that
\[\bm u_0 = \frac{1}{\alpha}(\partial_{\bm n}\bm z_0-(q_0+\lambda_0){\bm n}),\]
and
	\[{\bm u}_0\in \prod_{i=1}^{n} {\bm H}^{t-1/2}(\Gamma_i)\quad\mbox{ for all }t\leqslant m+1\mbox{ such that }t<\xi.\]
Finally, if $m > 0$ and $\Omega$ is convex, then $\bm u_0$ is continuous at a corner $x_j$ if and only if $q_0(x_j)+\lambda_0=0$.
\end{theorem}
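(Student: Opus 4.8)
The plan is to (i) write down the first-order optimality system using the solution operator $\bm E$ and its adjoint $\bm E^\star$, (ii) bootstrap once to obtain the regularity of $\bm u_0$, $\bm y_0$ and $p_0$, (iii) upgrade the adjoint-state regularity via the Dauge estimates and read off the sidewise regularity of the control, and (iv) settle the corner behaviour by a pointwise argument. For (i), I would use that the reduced functional $J_0=\bm F+\tfrac{c_\Omega}{2}+\tfrac\alpha2\|\cdot\|^2_{\bm L^2(\Gamma)}$ is strictly convex, coercive and differentiable on $\bm V^0(\Gamma)$, so the unique minimizer $\bm u_0$ satisfies $\bm F'(\bm u_0)\bm v+\alpha(\bm u_0,\bm v)_\Gamma=0$ for all $\bm v\in\bm V^0(\Gamma)$. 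Writing $\bm y_0=\bm E\bm u_0$ and using \eqref{derivative_F}, this reads $(\bm E^\star(\bm y_0-\bm y_d)+\alpha\bm u_0,\bm v)_\Gamma=0$; taking $(\bm z_0,q_0)$ to solve the adjoint Stokes system with datum $\bm y_0-\bm y_d$, formula \eqref{def_Estar} gives $\bm E^\star(\bm y_0-\bm y_d)=-\partial_{\bm n}\bm z_0+q_0\bm n$, so the optimality condition becomes \eqref{opt}, and the state and adjoint equations in the statement are simply the definitions of $(\bm y_0,p_0)$ and $(\bm z_0,q_0)$.

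For (ii), since $\bm u_0\in\bm V^0(\Gamma)$, \Cref{continuous_mapping} with $s=0$ gives $\bm y_0\in\bm V^{1/2}(\Omega)$, hence $\bm y_0-\bm y_d\in\bm L^2(\Omega)$. For each $0<s<s^\star$ (recall $s^\star\le1/2$) I would apply \Cref{Dauge55a} with $h=0$ and $\bm g=\bm y_0-\bm y_d\in\bm L^2(\Omega)\subset\bm H^{s-1/2}(\Omega)$, and then \eqref{NormalTraceContinuity} with $h=0$, to obtain $\partial_{\bm n}\bm z_0-(q_0+\lambda(\bm z_0,q_0))\bm n\in\bm V^s(\Gamma)$. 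Since the $\bm L^2(\Gamma)$-orthogonal complement of $\bm V^0(\Gamma)=\ker\big(\bm v\mapsto(\bm v,\bm n)_\Gamma\big)$ is $\mathrm{span}\{\bm n\}$, condition \eqref{opt} forces $\alpha\bm u_0-\partial_{\bm n}\bm z_0+q_0\bm n$ to be a scalar multiple of $\bm n$; hence $\bm u_0=\tfrac1\alpha\big(\partial_{\bm n}\bm z_0-(q_0+\lambda_0)\bm n\big)$ with $\lambda_0\in\mathbb{R}$ determined uniquely by $(\bm u_0\cdot\bm n,1)_\Gamma=0$ (in fact $\lambda_0=\lambda(\bm z_0,q_0)$ via \eqref{Ray2.2}). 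Since the right-hand side lies in $\bm V^s(\Gamma)$, this yields $\bm u_0\in\bm V^s(\Gamma)$ for all $0\le s<s^\star$, and a second application of \Cref{continuous_mapping} gives $\bm y_0\in\bm V^{s+1/2}(\Omega)$ and $p_0\in(H^{1/2-s}(\Omega)\cap L^2_0(\Omega))'$ for $s<s^\star$.

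For (iii), because $m<s^\star\le1/2$ the state $\bm y_0$ is at least as regular as $\bm y_d$, so $\bm y_0-\bm y_d\in\bm H^{m}(\Omega)\subset\bm H^{t-1}(\Omega)$ for every $t\le m+1$. Applying \Cref{Dauge55b} when $\Omega$ is convex — and \Cref{Dauge55a} with $h=0$ in the general polygonal case, where $\xi<1$ already makes the constraint $t\le m+1$ nonbinding — to the adjoint equation gives $\bm z_0\in\bm V^{1+t}(\Omega)\cap\bm H^1_0(\Omega)$ and $q_0\in H^{t}(\Omega)\cap L^2_0(\Omega)$ for all $t\le m+1$ with $t<\xi$. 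Restricting $\bm u_0=\tfrac1\alpha(\partial_{\bm n}\bm z_0-(q_0+\lambda_0)\bm n)$ to a side $\Gamma_i$, on which $\bm n$ is constant, and applying the trace theorem to $\nabla\bm z_0,\,q_0\in\bm H^{t}(\Omega)$ then gives $\bm u_0\in\bm H^{t-1/2}(\Gamma_i)$ for all such $t$.

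For (iv), when $m>0$ and $\Omega$ is convex we have $\xi>\pi/\omega>1$, so I would fix $t\in(1,\min\{1+m,\xi\})$; then $\bm z_0\in\bm H^{1+t}(\Omega)$ has $\nabla\bm z_0$ continuous on $\overline\Omega$, and $q_0\in H^{t}(\Omega)\hookrightarrow C^0(\overline\Omega)$ (Sobolev embedding in two dimensions). At a corner $x_j$ whose two incident sides $\Gamma_j,\Gamma_{j+1}$ have distinct outward unit normals $\bm n_j\ne\bm n_{j+1}$ (the sides are not collinear at a genuine corner), differentiating $\bm z_0=0$ along each side gives $\nabla\bm z_0(x_j)\bm\tau_j=\nabla\bm z_0(x_j)\bm\tau_{j+1}=\bm 0$; the tangents being linearly independent, $\nabla\bm z_0(x_j)=\bm 0$, so $\partial_{\bm n}\bm z_0\to\bm 0$ at $x_j$ from either side. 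Hence the one-sided limits of $\bm u_0$ at $x_j$ are $-\tfrac1\alpha(q_0(x_j)+\lambda_0)\bm n_j$ and $-\tfrac1\alpha(q_0(x_j)+\lambda_0)\bm n_{j+1}$, and since $\bm u_0$ is continuous on each open side, $\bm u_0$ is continuous at $x_j$ if and only if $q_0(x_j)+\lambda_0=0$. I expect the main obstacle to be the bookkeeping in (ii)–(iii): verifying the hypotheses of \Cref{Dauge55a}/\Cref{Dauge55b} and of \eqref{NormalTraceContinuity} with exactly the right Sobolev exponents, noticing that a single bootstrapping round already saturates the control regularity because $s^\star\le1/2$, and recognizing that the adjoint regularity is ultimately capped by $\bm y_d\in\bm H^m$ rather than by $\bm y_0$; the $\mathrm{span}\{\bm n\}$ identification of $\lambda_0$ and the use of $\bm z_0|_\Gamma=0$ to annihilate $\nabla\bm z_0$ at the corner are the remaining delicate points, but short.
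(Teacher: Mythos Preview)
Your proof is correct and follows essentially the same route as the paper: derive the optimality system via $\bm E$ and $\bm E^\star$, bootstrap once using \Cref{continuous_mapping} and the Dauge regularity results for the adjoint, then take traces sidewise and analyze the corners. Two places where you are in fact slightly more careful than the paper: you distinguish the convex and nonconvex cases when invoking \Cref{Dauge55a} versus \Cref{Dauge55b} (the paper cites only the latter, which as stated assumes convexity), and your self-contained argument that $\nabla\bm z_0(x_j)=\bm 0$ at a genuine corner---via continuity of $\nabla\bm z_0\in\bm H^{t}(\Omega)\hookrightarrow C^0(\overline\Omega)$ and linear independence of the two tangent directions---replaces the paper's appeal to \cite{Casas_Mateos_Raymond_Penalization_ESAIM_Control_Optim_2009}.
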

Here, the state equation must be understood in the very weak sense \eqref{veryWeakForm0}, while the adjoint state equation must be understood in the variational sense.
\begin{proof}
	By the definition of $J_{0}(\bm u)$ in \eqref{J0} and \eqref{derivative_F}, the derivative of the objective functional $J_{0}(\bm u)$ for $\bm u, \bm v\in\bm V^0(\Gamma)$ can be written as
	\begin{align*}
	J_{0}'(\bm u)\bm v &= (\alpha \bm u + \bm E^\star\bm E \bm u,\bm v)_\Gamma-(\bm E^\star\bm y_d,\bm v)_\Gamma\\
	&= (\alpha \bm u + \bm E^\star(\bm E \bm u-\bm y_d),\bm v)_\Gamma\\
	& = (\alpha  {\bm u}-(\partial_{\bm n}\bm z(\bm y_{\bm u}-\bm y_d)-q(\bm y_{\bm u}-\bm y_d){\bm n}),{\bm v})_\Gamma,
	\end{align*}
	where we used \eqref{def_E} and \eqref{def_Estar} in the last equality.  The optimality conditions follow in a standard way.
	For $\bm v\in \bm V^0(\Gamma)$ we have that $(\lambda\bm u,\bm v)=0$ for any $\lambda\in\mathbb{R}$. Taking {$\lambda_0 $ to equal the constant $ \lambda(\bm z(\bm y_{\bm u}-\bm y_d),q(\bm y_{\bm u}-\bm y_d))$, which is defined in \eqref{Ray2.2},} we also have that
	\[(\alpha  {\bm u}_0-(\partial_{\bm n}\bm z_0-(q_0+\lambda_0){\bm n}),{\bm v})_\Gamma = 0\quad \forall {\bm v}\in {\bm V}^0(\Gamma).\]
	
	This implies that $\alpha \bm u_0$ is the {$\bm L^2(\Gamma)$-projection} of $\partial_{\bm n}\bm z_0-(q_0+\lambda_0){\bm n}$ onto $\bm V^0(\Gamma)$. Since $\partial_{\bm n}\bm z_0-(q_0+\lambda_0){\bm n}\in\bm V^0(\Gamma)$, we have \[\bm u_0 = \frac{1}{\alpha}(\partial_{\bm n}\bm z_0-(q_0+\lambda_0){\bm n}).\]
		
	The regularity follows from a bootstrapping argument:	
	From  \Cref{continuous_mapping} we have that ${\bm y}_0\in {\bm V}^{1/2}(\Omega)$. Using this and taking into account that ${\bm y}_d\in \bm H^m(\Omega)$, we have from \Cref{Dauge55b} that
	$\bm z_0\in {\bm V}^{1+t}(\Omega)$, $q_0\in H^{t}(\Omega)\cap L_0^2(\Omega)$ for all $t\leqslant 1+m$ such that $t<\xi$.
	
	From trace theory, and since $1/2<t$, it is clear that
	\[ \partial_{\bm n}\bm z_0-(q_0+\lambda_0)\bm n\in \prod_{i=1}^n {\bm H}^{t-1/2}(\Gamma_i)\quad \mbox{ for all } t\leqslant m+ 1\mbox{ such that }t<\xi.\]
	For $t<1$, and taking $s=t-1/2$, we have that $s<s^\star$ and that
	$\prod_{i=1}^n {\bm H}^{s}(\Gamma_i) = {\bm H}^{s}(\Gamma)$. Therefore, \eqref{opt} gives that $u_0\in {\bm H}^{s}(\Gamma)$ for all $s < s^\star$.
	The regularity of the optimal state follows from  \Cref{continuous_mapping}.
	
	If $m > 0$ and $\Omega$ is convex, then
	 the gradient of the dual pressure $q_0$ is a function in $H^{t-1}(\Omega)$ with $t-1>0$. So we have that each component $z^i$, $i=1,2$ of $\bm z_0$, satisfies $\-\Delta z^i\in H^{t-1}(\Omega)$ and $z^i=0$ on $\Gamma$. {Therefore, we have that $\partial_{\bm n}z^i(x_j)=0$, $i=1,2$, for every convex corner $x_j$ (cf. \cite[Appendix A]{Casas_Mateos_Raymond_Penalization_ESAIM_Control_Optim_2009}); also, from \cite[Lemma A2]{Casas_Mateos_Raymond_Penalization_ESAIM_Control_Optim_2009} and the Sobolev imbedding theorem we have that the normal derivative of $\bm z_0$ is a continuous function.}
	For the pressure, the situation is slightly different. From trace theory we have that $q_0\in H^{t-1/2}(\Gamma),$ and by Sobolev imbeddings we know $q_0$ is a continuous function.
	Nevertheless, the vector ${\bm n}$ is discontinuous at the corners, and hence the $(q_0+\lambda_0)\bm n$ can only be continuous at $x_j$ if $q_0(x_j)=-\lambda_0$.
\end{proof}
\begin{remark}\label{Remark2}
This regularity of the optimal control in a convex domain is essentially different from the regularity achieved by the optimal control of problems related to the Poisson equation.  The solution of a problem governed by the Poisson equation must be a continuous function, which is also zero at the corners. In our case, the optimal control may show discontinuities. See \Cref{Figure_02} for an example with a continuous control and  \Cref{Figure_03} for a problem example with \emph{discontinuous} control.
\end{remark}
\begin{remark}\label{Remark3}
Notice that the pressure is determined up to a constant. We choose the pressure such that $(q_0,1)=0$, but any other representative is of course possible. The value of $\lambda_0$ would change accordingly, so that $q_0+\lambda_0$ does not vary.
\end{remark}
}

%%%%%%%%%%%%%%%%%%%%%%%%%%%%%%%%%%%%%%%%%%%%%%%%%%%%%%%%%%%%%%%%%%%%%%%%%%%%%%%%%%%%%%%%%
\section{Stokes Dirichlet boundary control in the energy space}\label{Stokes_enegy_space}

Next, we consider Stokes Dirichlet boundary control with a different regularization term:
\begin{align}\label{OCP_Stokes_obj}
\min\limits_{{\bm u}\in {\bm V}^{1/2}(\Gamma)} J_{1/2}({\bm u})=\frac 1 2 \|{\bm y}_{\bm u}-{\bm y}_d\|^2_{{\bm L}^2(\Omega)}+
\frac{\alpha}{2}|{\bm u}|^2_{{\bm H}^{1/2}(\Gamma)},
\end{align}
{where again we assume $\bm y_d\in \bm L^2(\Omega)$ and $\alpha>0$.}

There are different kinds of definitions for the $\bm H^{1/2}(\Gamma)$-norm, e.g., one may use the Sobolev-Slobodeckii norm or the Fourier transform. The key point to the study of the optimization problem (\ref{OCP_Stokes_obj}) is to find an appropriate representation for the $\bm H^{1/2}(\Gamma)$-norm that enables us to derive the first order optimality condition. Here we follow the idea of \cite{Of_Phan_Steinbach_Energy_NM_2015} and {introduce a Stokes version of the Steklov-Poincar\'e operator (cf. \cite{BadeaDiscacciatiQuarteroni,DiscacciatiQuarteroni2003}) associated with (\ref{veryWeakForm0}).}

It follows from  \Cref{continuous_mapping} that  for any given control $\bm u\in \bm V^{1/2}(\Gamma)$,  there exists a unique state $(\bm y_{\bm u},p_{\bm u})\in \bm V^1(\Omega)\times L_0^2(\Omega)$ that satisfies
\begin{align}\label{bound_yu_pu}
\|{\bm y}_{\bm u}\|_{\bm H^1(\Omega)}+\|p_{\bm u}\|_{L^2(\Omega)}\leqslant C\|{\bm u}\|_{\bm H^{1/2}(\Gamma)}.
\end{align}
%Due to the compact embedding of $\bm V^1(\Omega)$ into $\bm L^2(\Omega)$, we may introduce the operator ${\bm E}:{\bm V}^{1/ 2}({\Gamma})\rightarrow {\bm L}^2(\Omega)$ by ${\bm E}{\bm u}={\bm y}_{\bm u}$, and also the operator $P:{\bm V}^{1/ 2}({\Gamma})\rightarrow L^2_0(\Omega)$ by $P{\bm u}=p_{\bm u}$.
%
% For the following purpose of deriving the first order optimality condition, we follow \cite{Of_Phan_Steinbach_Energy_NM_2015} to introduce a Stokes version of the Steklov-Poincar\'e operator (cf. \cite{BadeaDiscacciatiQuarteroni,DiscacciatiQuarteroni2003}) associated with (\ref{veryWeakForm0}).
%
Given ${\bm u}\in {\bm V}^{1/2}(\Gamma)$, we define
 $\bm D\bm u\in \bm H^{-1/2}(\Gamma)$ by
	\begin{align}\label{Steklov-Poincare}
	\langle{\bm D}{\bm u},\bm v \rangle_\Gamma=(\nabla \bm y_{\bm u},\nabla {\bm R}\bm v)-(p_{\bm u},\nabla\cdot {\bm R}\bm v)\ \quad\forall \bm v\in \bm H^{1/2}(\Gamma),
	\end{align}
	where ${\bm R}$ is any continuous extension operator from $\bm H^{1/2}(\Gamma)$ to $\bm H^1(\Omega)$.
\begin{lemma}
	The definition of $\bm D$ is independent of the chosen extension $\bm R$ and
	\begin{subequations}
	\begin{align}
	\bm D\bm u &= \bm t (\bm 0,\bm u),\label{charac_of_D}\\
	\|\bm  D \bm u\|_{\bm H^{-1/2}(\Gamma)} &\leqslant C \|\bm u\|_{\bm H^{1/2}(\Gamma)}\ \quad\forall \bm u\in \bm V^{1/2}(\Gamma). \label{boundness_of_D}
	\end{align}		
	\end{subequations}
\end{lemma}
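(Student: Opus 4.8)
The plan is to establish the three claims in turn, exploiting the variational characterization of the state and the fact that we already have good control over the state from \Cref{continuous_mapping}. First I would prove independence from the extension operator. Let $\bm R_1$ and $\bm R_2$ be two continuous extension operators from $\bm H^{1/2}(\Gamma)$ to $\bm H^1(\Omega)$, and fix $\bm v\in \bm H^{1/2}(\Gamma)$. Then $\bm R_1\bm v - \bm R_2\bm v \in \bm H^1_0(\Omega)$, so testing the first equation of the weak form \eqref{WeakForm} (which $(\bm y_{\bm u},p_{\bm u})$ satisfies since $\bm u\in\bm V^{1/2}(\Gamma)$) with $\bm\zeta = \bm R_1\bm v-\bm R_2\bm v$ gives
\[
(\nabla\bm y_{\bm u},\nabla(\bm R_1\bm v-\bm R_2\bm v)) - (p_{\bm u},\nabla\cdot(\bm R_1\bm v-\bm R_2\bm v)) = 0,
\]
which is exactly the statement that the right-hand side of \eqref{Steklov-Poincare} is the same for $\bm R_1$ and $\bm R_2$. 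Hence $\bm D\bm u$ is well defined.

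Next I would identify $\bm D\bm u$ with the stress force $\bm t(\bm 0,\bm u)$. Recall from \eqref{def_S} that $\bm t(\bm 0,\bm u)$ is defined via $\langle\bm t(\bm 0,\bm u),\bm\zeta\rangle_\Gamma = (\nabla\bm\psi,\nabla\bm\zeta) - (\phi,\nabla\cdot\bm\zeta)$ for all $\bm\zeta\in\bm H^1(\Omega)$, where $(\bm\psi,\phi)$ is the solution of the incompressible Stokes system with source $\bm g=\bm 0$ and Dirichlet data $\bm u$; with $\bm g=\bm 0$ this solution is precisely $(\bm y_{\bm u},p_{\bm u})$, the solution of \eqref{WeakForm}. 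So $\langle\bm t(\bm 0,\bm u),\bm\zeta\rangle_\Gamma = (\nabla\bm y_{\bm u},\nabla\bm\zeta)-(p_{\bm u},\nabla\cdot\bm\zeta)$ for all $\bm\zeta\in\bm H^1(\Omega)$. Given $\bm v\in\bm H^{1/2}(\Gamma)$, choosing $\bm\zeta = \bm R\bm v$ in this identity and comparing with \eqref{Steklov-Poincare} yields $\langle\bm D\bm u,\bm v\rangle_\Gamma = \langle\bm t(\bm 0,\bm u),\bm R\bm v\rangle_\Gamma$; since $\bm R\bm v$ has trace $\bm v$ on $\Gamma$ and the duality pairing on $\Gamma$ only sees the trace, this equals $\langle\bm t(\bm 0,\bm u),\bm v\rangle_\Gamma$, proving \eqref{charac_of_D}. (If one prefers to avoid invoking the boundary pairing directly, the same conclusion follows because $\bm v\mapsto\langle\bm D\bm u,\bm v\rangle_\Gamma$ and $\bm v\mapsto\langle\bm t(\bm 0,\bm u),\bm v\rangle_\Gamma$ agree on all of $\bm H^{1/2}(\Gamma)$ by the computation above.)

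Finally, the bound \eqref{boundness_of_D} follows by estimating the right-hand side of \eqref{Steklov-Poincare}: by Cauchy--Schwarz,
\[
|\langle\bm D\bm u,\bm v\rangle_\Gamma| \leqslant \|\nabla\bm y_{\bm u}\| \, \|\nabla\bm R\bm v\| + \|p_{\bm u}\| \, \|\nabla\cdot\bm R\bm v\| \leqslant C\big(\|\bm y_{\bm u}\|_{\bm H^1(\Omega)} + \|p_{\bm u}\|_{L^2(\Omega)}\big)\|\bm R\bm v\|_{\bm H^1(\Omega)}.
\]
Using the a priori bound \eqref{bound_yu_pu} for the state and the continuity of the fixed extension operator $\bm R$, i.e. $\|\bm R\bm v\|_{\bm H^1(\Omega)}\leqslant C\|\bm v\|_{\bm H^{1/2}(\Gamma)}$, gives $|\langle\bm D\bm u,\bm v\rangle_\Gamma|\leqslant C\|\bm u\|_{\bm H^{1/2}(\Gamma)}\|\bm v\|_{\bm H^{1/2}(\Gamma)}$, and taking the supremum over $\bm v\in\bm H^{1/2}(\Gamma)$ with $\|\bm v\|_{\bm H^{1/2}(\Gamma)}\leqslant 1$ yields \eqref{boundness_of_D}. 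I do not anticipate a serious obstacle here; the only point requiring a little care is the first step (well-definedness), where one must be sure that the difference of two extensions lands in $\bm H^1_0(\Omega)$ so that \eqref{WeakForm} applies — but this is immediate from the trace characterization of $\bm H^1_0(\Omega)$.
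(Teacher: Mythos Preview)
Your proof is correct, and the argument for \eqref{boundness_of_D} is essentially identical to the paper's. The route to independence and to \eqref{charac_of_D}, however, differs from the paper's in an instructive way.

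For independence of $\bm R$, the paper does not compare two extensions. Instead it first argues (via the $\bm H(\mathrm{div};\Omega)$ structure of $(\nabla+\nabla^T)\bm y_{\bm u}-p_{\bm u}\mathcal I$ and the variational normal derivative of \cite[Lemma A6]{Casas_Mateos_Raymond_Penalization_ESAIM_Control_Optim_2009}) that $\partial_{\bm n}\bm y_{\bm u}-p_{\bm u}\bm n$ is a well-defined element of $\bm H^{-1/2}(\Gamma)$, and then integrates by parts in \eqref{Steklov-Poincare} to obtain the explicit identity
\[
\langle\bm D\bm u,\bm v\rangle_\Gamma=\langle\partial_{\bm n}\bm y_{\bm u}-p_{\bm u}\bm n,\bm v\rangle_\Gamma,
\]
which is manifestly $\bm R$-independent; \eqref{charac_of_D} then follows by comparing with \eqref{def_S_1}. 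Your argument is more elementary: you bypass the normal-trace machinery entirely by testing \eqref{WeakForm} with the difference of extensions (which lies in $\bm H^1_0(\Omega)$), and you obtain \eqref{charac_of_D} by matching \eqref{Steklov-Poincare} directly against the variational definition \eqref{def_S} of $\bm t$. The trade-off is that the paper's integration-by-parts step yields the explicit formula $\bm D\bm u=\partial_{\bm n}\bm y_{\bm u}-p_{\bm u}\bm n$ (equation \eqref{char_Du}), which is used again later when computing $J_{1/2}'(\bm u)\bm v$; your route proves the lemma cleanly but does not record that formula along the way.
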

\begin{proof}
First of all, writing the partial differential equation in divergence form as
	\[-\nabla\cdot\big((\nabla+\nabla^T){\bm y}_{\bm u} -p_{\bm u} \mathcal{I}\big) = 0\]
	gives $(\nabla+\nabla^T){\bm y}_{\bm u} -p_{\bm u} \mathcal{I}\in {\bm H}({\rm div}; \Omega)$, and so this function has a well defined {normal} trace in $\bm H^{-1/2}(\Gamma)$. It is remarkable too that it is possible to define a variational normal derivative $\partial_{\bm n} {\bm y}_{\bm u}\in {\bm H}^{-1/2}(\Gamma)$, cf. \cite[Lemma A6]{Casas_Mateos_Raymond_Penalization_ESAIM_Control_Optim_2009}, and hence  $p_{\bm u}{\bm n}$ is also a well defined element in ${\bm H}^{-1/2}(\Gamma)$.
	
	Next, for all $\bm u, \bm v\in\bm H^{1/2}(\Gamma)$, integrating by parts in the definition of $ \bm{ D u } $ gives
	\begin{align}\label{char_Du}
	\langle{\bm D}{\bm u},\bm v \rangle_\Gamma&=  \int_\Omega
	\big(\nabla \bm y_{\bm u}\nabla {\bm R}\bm v- p_{\bm u}\nabla\cdot {\bm R}\bm v\big) \nonumber\\
	&=   \int_\Omega\big(-\Delta {\bm y}_{\bm u}+\nabla p_{ {\bm u}}\big)\bm R\bm v +\langle \partial_{\bm n} {\bm y}_{\bm u}-p_{\bm u}{\bm n},\bm v\rangle_\Gamma\nonumber\\
	&=  \langle \partial_{\bm n} {\bm y}_{\bm u}-p_{\bm u}{\bm n},\bm v\rangle_\Gamma,
	\end{align}
	where we used $-\Delta {\bm y}_{\bm u}+\nabla p_{\bm u} = 0$. This proves that the definition of $\bm D$ is independent of the chosen extension $\bm R$, and \eqref{charac_of_D} holds by \eqref{def_S_1} and \eqref{char_Du}.
	
	Finally, we prove \eqref{boundness_of_D}. Using the definition of $\bm D$ in \eqref{Steklov-Poincare}, the bound in \eqref{bound_yu_pu}, and the continuity of $\bm R: \bm H^{1/2}(\Gamma)\to \bm H^{1}(\Omega)$ gives
	\begin{align*}
		\|\bm  D \bm u\|_{\bm H^{-1/2}(\Gamma)} &= \sup_{\bm 0\neq \bm v\in\bm H^{1/2}(\Gamma)}\frac{	\langle{\bm D}{\bm u},\bm v \rangle_\Gamma}{\|\bm  v\|_{\bm H^{1/2}(\Gamma)}} \\
		&\leqslant C \sup_{\bm 0\neq \bm v\in\bm H^{1/2}(\Gamma)}\frac{	(\|{\bm y}_{\bm u}\|_{\bm H^1(\Omega)}+\|p_{\bm u}\|_{L^2(\Omega)}) |{\bm R}\bm v|_{\bm H^1(\Omega)}}{\|\bm  v\|_{\bm H^{1/2}(\Gamma)}}\\
		&\leqslant C(\|{\bm y}_{\bm u}\|_{\bm H^1(\Omega)}+\|p_{\bm u}\|_{L^2(\Omega)})\\
		&\leqslant C \|\bm u\|_{\bm H^{1/2}(\Gamma)}.
	\end{align*}
\end{proof}

\begin{lemma} $\langle{\bm D}{\bm u},\bm u \rangle_\Gamma^{{1/2}}$ is a seminorm in ${\bm V}^{1/2}(\Gamma)$ equivalent to the ${\bm H}^{1/2}(\Gamma)$ seminorm.
\end{lemma}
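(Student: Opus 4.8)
The plan is to show that $\langle \bm D \bm u, \bm u\rangle_\Gamma^{1/2}$ defines a seminorm on $\bm V^{1/2}(\Gamma)$ equivalent to the standard $\bm H^{1/2}(\Gamma)$ seminorm. First I would establish that the quadratic form $\bm u \mapsto \langle \bm D\bm u, \bm u\rangle_\Gamma$ is nonnegative. Testing \eqref{Steklov-Poincare} with $\bm v = \bm u$ and choosing the extension $\bm R\bm u = \bm y_{\bm u}$ itself (legitimate since $\bm y_{\bm u} \in \bm H^1(\Omega)$ is an extension of $\bm u$, and $\bm D$ is extension-independent), we get $\langle \bm D\bm u, \bm u\rangle_\Gamma = (\nabla \bm y_{\bm u}, \nabla \bm y_{\bm u}) - (p_{\bm u}, \nabla\cdot \bm y_{\bm u}) = |\bm y_{\bm u}|^2_{\bm H^1(\Omega)} \geqslant 0$, where the pressure term vanishes because $\nabla\cdot \bm y_{\bm u} = 0$. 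This identity is the linchpin. Then the triangle/Cauchy--Schwarz argument for the quadratic form $\langle \bm D(\cdot),\cdot\rangle_\Gamma$ (symmetry of $\bm D$ follows from the same substitution: $\langle \bm D\bm u,\bm v\rangle_\Gamma = (\nabla \bm y_{\bm u}, \nabla \bm y_{\bm v})$, which is symmetric) gives that $\langle \bm D\bm u,\bm u\rangle_\Gamma^{1/2}$ is a genuine seminorm.

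Next I would prove the two-sided equivalence with $|\bm u|_{\bm H^{1/2}(\Gamma)}$. The upper bound $\langle \bm D\bm u,\bm u\rangle_\Gamma^{1/2} = |\bm y_{\bm u}|_{\bm H^1(\Omega)} \leqslant \|\bm y_{\bm u}\|_{\bm H^1(\Omega)} \leqslant C\|\bm u\|_{\bm H^{1/2}(\Gamma)}$ follows directly from \eqref{bound_yu_pu}; but since the left side is a seminorm vanishing on constants (constants are not in $\bm V^{1/2}(\Gamma)$ unless zero, so I should be slightly careful — on $\bm V^{1/2}(\Gamma)$ the full norm and seminorm are equivalent by a Poincaré-type / quotient argument, or one can invoke that the trace map and its right inverse respect the seminorm structure). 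For the lower bound I would use that $\bm y_{\bm u}$ is the \emph{harmonic-type} (Stokes) extension, hence minimizes $|\bm v|_{\bm H^1(\Omega)}$ among all divergence-free $\bm H^1$ extensions $\bm v$ of $\bm u$ with the appropriate pressure pairing — more precisely, for any extension $\bm R\bm u$, $\langle \bm D\bm u,\bm u\rangle_\Gamma = (\nabla \bm y_{\bm u},\nabla \bm R\bm u) \leqslant |\bm y_{\bm u}|_{\bm H^1}|\bm R\bm u|_{\bm H^1}$, so $|\bm y_{\bm u}|_{\bm H^1} \leqslant |\bm R\bm u|_{\bm H^1}$; taking the infimum over continuous extensions $\bm R$ and using that this infimum is comparable to $|\bm u|_{\bm H^{1/2}(\Gamma)}$ (the quotient-norm characterization of $\bm H^{1/2}(\Gamma)$ as traces of $\bm H^1(\Omega)$) yields $|\bm u|_{\bm H^{1/2}(\Gamma)} \leqslant C|\bm y_{\bm u}|_{\bm H^1(\Omega)} = C\langle \bm D\bm u,\bm u\rangle_\Gamma^{1/2}$. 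Here one needs that divergence-free extensions are available and that restricting to them does not change the trace seminorm up to constants, which is a standard consequence of the right inverse of the divergence (Bogovskii operator) combined with the trace lift.

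The main obstacle I anticipate is the lower bound: one must produce, for a given $\bm u \in \bm V^{1/2}(\Gamma)$, a divergence-free extension whose $\bm H^1$-seminorm controls $|\bm u|_{\bm H^{1/2}(\Gamma)}$ from above, so that $|\bm y_{\bm u}|_{\bm H^1}$ (which is dominated by every such extension) still controls $|\bm u|_{\bm H^{1/2}(\Gamma)}$. This is where the condition $(\bm u\cdot\bm n,1)_\Gamma = 0$ defining $\bm V^{1/2}(\Gamma)$ is used — it guarantees a solenoidal lifting exists. A clean route is: take any continuous (not necessarily solenoidal) right inverse of the trace $\bm R$, correct it by solving $\nabla\cdot\bm w = -\nabla\cdot\bm R\bm u$ in $\bm H^1_0(\Omega)$ (solvable since $\int_\Omega \nabla\cdot\bm R\bm u = \langle\bm u\cdot\bm n,1\rangle_\Gamma = 0$), and use $\bm R\bm u + \bm w$ as the extension, with norm bounded by $C\|\bm u\|_{\bm H^{1/2}(\Gamma)}$. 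Alternatively, since this is stated as a Lemma whose proof the reader will see next, I expect the authors simply combine the identity $\langle \bm D\bm u,\bm u\rangle_\Gamma = |\bm y_{\bm u}|^2_{\bm H^1(\Omega)}$ with \eqref{bound_yu_pu} (upper bound) and the trace inequality $|\bm u|_{\bm H^{1/2}(\Gamma)} \leqslant C\|\bm y_{\bm u}\|_{\bm H^1(\Omega)}$ together with a Poincaré-type bound relating $\|\bm y_{\bm u}\|_{\bm H^1(\Omega)}$ to $|\bm y_{\bm u}|_{\bm H^1(\Omega)}$ (valid because $\bm y_{\bm u}$ ranges in a space on which the seminorm is a norm), closing the loop.
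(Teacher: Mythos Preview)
Your approach is essentially the same as the paper's: both establish the key identity $\langle \bm D\bm u,\bm v\rangle_\Gamma = (\nabla\bm y_{\bm u},\nabla\bm y_{\bm v})$ by choosing the Stokes extension $\bm R\bm v = \bm y_{\bm v}$ (the paper composes with a projection onto $\bm V^{1/2}(\Gamma)$ to make $\bm R$ defined on all of $\bm H^{1/2}(\Gamma)$), and from $\langle\bm D\bm u,\bm u\rangle_\Gamma = |\bm y_{\bm u}|^2_{\bm H^1(\Omega)}$ the seminorm equivalence follows. You actually supply more justification for the two-sided bound than the paper's terse proof; one small slip is the aside that constants are not in $\bm V^{1/2}(\Gamma)$ --- in fact constant vectors \emph{do} lie in $\bm V^{1/2}(\Gamma)$ since $\int_\Gamma \bm c\cdot\bm n = 0$ by the divergence theorem --- but this is harmless because the statement concerns seminorms, and both vanish on constants.
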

\begin{proof}
Let $\bm Q$ be the projection of $\bm H^{1/2}(\Gamma)$ onto $\bm V^{1/2}(\Gamma)$ and set  $\bm R\bm v=\bm y_{\bm Q\bm v}$. Notice that $\nabla\cdot \bm R{\bm v} =0$  and if $\bm v\in\bm V^{1/2}(\Gamma)$ then $\bm R\bm v=\bm y_{\bm v}$.
      By \eqref{Steklov-Poincare} we have
\begin{align}\label{EM2}
\langle{\bm D}{\bm u},\bm v \rangle_\Gamma=(\nabla \bm y_{\bm u},\nabla {\bm y}_{\bm v})\ \quad\forall \bm v\in \bm V^{1/2}(\Gamma),
\end{align}
and thus we have that $\langle{\bm D}{\bm u},\bm u \rangle_\Gamma^{{1/2}}$ is a seminorm in ${\bm V}^{1/2}(\Gamma)$  equivalent to the ${\bm H}^{1/2}(\Gamma)$ seminorm.
\end{proof}

Proceeding similarly to the derivation of \eqref{J0}, the precise formulation of our control problem is given by
\begin{align}\label{P12}%\tag{$P_{1/2}$}\label{P12}
	\begin{split}
		\min\limits_{{\bm u}\in {\bm V}^{1/2}(\Gamma)} J_{1/2}({\bm u}) &=\frac{1}{2}\|{\bm y}_{\bm u}-{\bm y}_d\|^2_{{\bm L}^2(\Omega)}+
		\frac{\alpha}{2}\langle \bm D{\bm u},{\bm u}\rangle_\Gamma\\
		& = \frac 1 2\langle {\Talpha} {\bm u},{\bm u}\rangle_\Gamma-\langle {\gzero},{\bm u}\rangle_\Gamma + \frac{c_\Omega}{2},
	\end{split}
\end{align}
where $c_\Omega = \|{\bm y}_d\|_{\bm L^2(\Omega)}^2$ and
\begin{align}\label{def_T}
{\Talpha} = \alpha {\bm D}+{\bm E}^\star{\bm E}, \quad {\gzero}={\bm E}^\star{\bm y}_d\in {\bm V}^{-{1/ 2}}(\Gamma).
\end{align}
The functional being convex and coercive implies that problem \eqref{P12} has a unique solution $\bar {\bm u}\in \bm V^{1/2}(\Gamma)$.

We also note that, by \eqref{EM2}, an alternative way to write the functional for ${\bm u}\in {\bm V}^{1/2}(\Gamma)$ is
\begin{align*}
J_{1/2}(\bm u) = \frac{1}{2}\|{\bm y}_{\bm u}-{\bm y}_d\|^2_{{\bm L}^2(\Omega)}+
\frac{\alpha}{2}\| \nabla {\bm y}_{\bm u}\|^2_{\bm L^2(\Omega)}.
\end{align*}

\begin{lemma}\label{property_T}
There exist constants $C_1,C_2>0$ such that for every $\bm u,\bm v\in \bm V^{1/2}(\Gamma)$
\[\langle {\Talpha} {\bm u},{\bm v}\rangle_\Gamma\leq C_1\|\bm u\|_{\bm V^{1/2}(\Gamma)}\|\bm v\|_{\bm V^{1/2}(\Gamma)}\]
and
\[\langle {\Talpha} {\bm u},{\bm u}\rangle_\Gamma\geq C_2\|\bm u\|^2_{\bm V^{1/2}(\Gamma)}.\]

%The operator ${\Talpha}$ is self-adjoint, linear, continuous,  and  ${\bm H}^{1/2}(\Gamma)$-elliptic on ${\bm V}^{1/2}(\Gamma)$.
%, i.e., there exists a positive constant $c$ such that
%\begin{eqnarray}
%\langle {\Talpha} {\bm u},{\bm u}\rangle_\Gamma\geq c\|{\bm u}\|_{{\bm H}^{1/2}(\Gamma)}^2\quad \forall {\bm u}\in {\bm V}^{1/2}(\Gamma).
%\end{eqnarray}
\end{lemma}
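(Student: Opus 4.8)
The plan is to establish both inequalities by decomposing $\Talpha = \alpha\bm D + \bm E^\star\bm E$ and handling each summand separately, using the mapping properties collected in Section~\ref{S2}. For the upper bound, I would bound $\langle \bm E^\star\bm E\bm u,\bm v\rangle_\Gamma$ and $\langle\bm D\bm u,\bm v\rangle_\Gamma$. The first term is controlled by $\|\bm E^\star\bm E\bm u\|_{\bm H^{-1/2}(\Gamma)}\|\bm v\|_{\bm H^{1/2}(\Gamma)}$; by the boundedness \eqref{boundness_of_EstarE} this is $\leqslant C\|\bm u\|_{\bm H^{1/2}(\Gamma)}\|\bm v\|_{\bm H^{1/2}(\Gamma)} \leqslant C\|\bm u\|_{\bm V^{1/2}(\Gamma)}\|\bm v\|_{\bm V^{1/2}(\Gamma)}$. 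The $\bm D$ term is handled the same way using \eqref{boundness_of_D}, giving $\langle\bm D\bm u,\bm v\rangle_\Gamma \leqslant \|\bm D\bm u\|_{\bm H^{-1/2}(\Gamma)}\|\bm v\|_{\bm H^{1/2}(\Gamma)} \leqslant C\|\bm u\|_{\bm H^{1/2}(\Gamma)}\|\bm v\|_{\bm H^{1/2}(\Gamma)}$. Summing yields the first claim with $C_1$ depending on $\alpha$ and the embedding constants.

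For the lower (coercivity) bound I would first use that $\langle\bm E^\star\bm E\bm u,\bm u\rangle_\Gamma = (\bm E\bm u,\bm E\bm u) = \|\bm y_{\bm u}\|^2_{\bm L^2(\Omega)} \geqslant 0$, so this term only helps and can be dropped. Thus it suffices to show $\alpha\langle\bm D\bm u,\bm u\rangle_\Gamma \geqslant C_2\|\bm u\|^2_{\bm V^{1/2}(\Gamma)}$. By the preceding lemma, $\langle\bm D\bm u,\bm u\rangle_\Gamma^{1/2}$ is a seminorm on $\bm V^{1/2}(\Gamma)$ equivalent to the $\bm H^{1/2}(\Gamma)$-seminorm, i.e., $\langle\bm D\bm u,\bm u\rangle_\Gamma \geqslant c\,|\bm u|^2_{\bm H^{1/2}(\Gamma)}$. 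To upgrade the seminorm to the full $\bm V^{1/2}(\Gamma)$-norm I would invoke a Poincar\'e--Friedrichs-type (or Deny--Lions) argument: on the space $\bm V^{1/2}(\Gamma)$ the seminorm $|\cdot|_{\bm H^{1/2}(\Gamma)}$ is in fact a norm equivalent to $\|\cdot\|_{\bm H^{1/2}(\Gamma)}$, because the only constant vector fields $\bm c$ satisfying the constraint $(\bm c\cdot\bm n,1)_\Gamma = 0$ would have to... actually one must be slightly careful, since a nonzero constant vector field need not satisfy $(\bm c\cdot\bm n,1)_\Gamma = 0$ on a general polygon. The clean route is: if $|\bm u|_{\bm H^{1/2}(\Gamma)} = 0$ then $\bm u$ is constant on $\Gamma$, say $\bm u \equiv \bm c$; then $\bm y_{\bm u} \equiv \bm c$ as well (the constant is the Stokes solution with that boundary data and zero pressure), so $\nabla\cdot\bm y_{\bm u} = 0$ forces nothing new, but $\bm u\in\bm V^{1/2}(\Gamma)$ and the trace of a divergence-free field... rather, observe directly that $\bm y_{\bm c}=\bm c$ has $\langle\bm y_{\bm u}\cdot\bm n,1\rangle_\Gamma = (\bm c\cdot\bm n,1)_\Gamma = 0$ by definition of $\bm V^{1/2}(\Gamma)$, which does not immediately give $\bm c = 0$. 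The correct observation is that on a polygon $\Gamma$ is not contained in any line, so... I will instead argue by compactness: the embedding $\bm V^{1/2}(\Gamma)\hookrightarrow\bm L^2(\Gamma)$ is compact, so if the asserted equivalence failed there would be a sequence $\bm u_k$ with $\|\bm u_k\|_{\bm V^{1/2}(\Gamma)} = 1$ and $|\bm u_k|_{\bm H^{1/2}(\Gamma)}\to 0$; a subsequence converges in $\bm L^2(\Gamma)$ to some $\bm u$, which is constant (zero seminorm) and lies in $\bm V^{1/2}(\Gamma)$, and the convergence is actually in $\bm H^{1/2}(\Gamma)$ since the seminorms vanish, so $\|\bm u\|_{\bm V^{1/2}(\Gamma)}=1$; but a constant field in $\bm V^{1/2}(\Gamma)$ must be $\bm 0$ since $(\bm c\cdot\bm n,1)_\Gamma = \bm c\cdot\int_\Gamma\bm n\,ds = \bm c\cdot\bm 0 = 0$ gives no info, so I must use instead that the only $\bm H^{1/2}(\Gamma)$ function with vanishing seminorm \emph{whose harmonic/Stokes extension is divergence-free and has the boundary normal-flux constraint} — hmm.

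Let me state the obstacle plainly and give the fix I would actually use. \textbf{Main obstacle.} The delicate point is passing from "$\langle\bm D\bm u,\bm u\rangle_\Gamma$ controls the $\bm H^{1/2}(\Gamma)$-\emph{seminorm}" to "it controls the full $\bm V^{1/2}(\Gamma)$-\emph{norm}." The resolution is that $\bm V^{1/2}(\Gamma)$, as defined in the excerpt, satisfies $(\bm u\cdot\bm n,1)_\Gamma = 0$; combined with the fact that $\bm y_{\bm u}$ is the divergence-free Stokes extension, one shows $\langle\bm D\bm u,\bm u\rangle_\Gamma = \|\nabla\bm y_{\bm u}\|^2_{\bm L^2(\Omega)}$ by \eqref{EM2}, and then $\|\nabla\bm y_{\bm u}\|^2_{\bm L^2(\Omega)} \geqslant c\|\bm y_{\bm u}\|^2_{\bm H^1(\Omega)}$ by the Poincar\'e inequality on $\bm H^1_0$-complement after subtracting that $\bm y_{\bm u}$ with zero gradient is a constant $\bm c$ with $\bm c = \bm u$ on $\Gamma$ forcing $\bm u$ constant, and a constant in $\bm V^{1/2}(\Gamma)$ — here one genuinely uses that the polygon is nondegenerate so that the trace-zero-average plus divergence-free conditions, or more simply a direct Poincar\'e--Friedrichs inequality on $\{\bm v\in\bm H^1(\Omega): \nabla\cdot\bm v = 0\}$ modulo the (finite-dimensional) kernel — eliminates the constant. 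Finally, the trace inequality $\|\bm u\|_{\bm H^{1/2}(\Gamma)} = \|\bm y_{\bm u}\|_{\bm H^{1/2}(\Gamma)}\cdot(\text{something}) \leqslant C\|\bm y_{\bm u}\|_{\bm H^1(\Omega)}$ together with \eqref{bound_yu_pu} closes the loop, yielding $\alpha\langle\bm D\bm u,\bm u\rangle_\Gamma = \alpha\|\nabla\bm y_{\bm u}\|^2_{\bm L^2(\Omega)} \geqslant C\|\bm y_{\bm u}\|^2_{\bm H^1(\Omega)} \geqslant C\|\bm u\|^2_{\bm V^{1/2}(\Gamma)}$, which is the desired $C_2$ bound (with $C_2$ proportional to $\alpha$).
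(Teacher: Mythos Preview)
Your continuity argument is fine and matches the paper. The coercivity argument, however, has a genuine gap: you cannot drop the $\bm E^\star\bm E$ term and hope that $\alpha\langle\bm D\bm u,\bm u\rangle_\Gamma$ alone controls the full $\bm V^{1/2}(\Gamma)$ norm. You correctly noticed the obstruction yourself: for any nonzero constant vector $\bm c$ one has $(\bm c\cdot\bm n,1)_\Gamma=\bm c\cdot\int_\Gamma\bm n\,ds=0$, so $\bm c\in\bm V^{1/2}(\Gamma)$, while $\bm y_{\bm c}\equiv\bm c$ gives $\langle\bm D\bm c,\bm c\rangle_\Gamma=\|\nabla\bm y_{\bm c}\|_{\bm L^2(\Omega)}^2=0$. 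Thus $\bm D$ has nontrivial kernel in $\bm V^{1/2}(\Gamma)$, the seminorm is \emph{not} equivalent to the norm on that space, and the final inequality you write, $\alpha\|\nabla\bm y_{\bm u}\|_{\bm L^2(\Omega)}^2\geqslant C\|\bm y_{\bm u}\|_{\bm H^1(\Omega)}^2$, is simply false for general $\bm y_{\bm u}$ (take $\bm u$ constant).

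The fix, which is exactly the paper's route, is \emph{not} to drop the $\bm E^\star\bm E$ term. Keep both contributions and use \eqref{EM2} and $\bm E\bm u=\bm y_{\bm u}$ to write
\[
\langle\Talpha\bm u,\bm u\rangle_\Gamma=\|\bm y_{\bm u}\|_{\bm L^2(\Omega)}^2+\alpha\|\nabla\bm y_{\bm u}\|_{\bm L^2(\Omega)}^2\geqslant\min(1,\alpha)\,\|\bm y_{\bm u}\|_{\bm H^1(\Omega)}^2,
\]
and then apply the trace inequality $\|\bm u\|_{\bm H^{1/2}(\Gamma)}\leqslant C\|\bm y_{\bm u}\|_{\bm H^1(\Omega)}$. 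The $L^2$ piece coming from $\bm E^\star\bm E$ is precisely what handles the constants that $\bm D$ cannot see; no Poincar\'e or compactness argument on $\bm V^{1/2}(\Gamma)$ is needed (or even available).
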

\begin{proof}
%	It is obvious that $\bm  D$ is linear and self-adjoint, $\bm D$ is continuous by \eqref{boundness_of_D}.
The first property follows immediately from the definition of $\Talpha$. Notice that $\bm D$ maps $\bm V^{1/2}(\Gamma)$ into $\bm H^{-1/2}(\Gamma)$ which is continuously embedded in $\bm V^{-1/2}(\Gamma)$ by duality.

Next, by  \eqref{def_T}, \eqref{def_E} and  \eqref{EM2} we have
	\begin{align*}
		\langle {\Talpha} {\bm u},{\bm u}\rangle_\Gamma&=\|{\bm E}{\bm u}\|_{\bm L^2(\Omega)}^2+\alpha\langle {\bm D} {\bm u},{\bm u}\rangle_\Gamma \\
		&=\|{\bm y}_{\bm u}\|_{\bm L^2(\Omega)}^2+\alpha \|\nabla {\bm y}_{\bm u}\|_{\bm L^2(\Omega)}^2\\
		&\geqslant \min(1,\alpha)\|{\bm y}_{\bm u}\|_{{\bm H}^1(\Omega)}^2\\
		&\geqslant C_2\|{\bm u}\|_{{\bm H}^{1/2}(\Gamma)}^2 = C_2\|{\bm u}\|_{{\bm V}^{1/2}(\Gamma)}^2,
	\end{align*}
	where we  used the trace theorem in the last inequality.
\end{proof}

Next,  we  give more insights into the structure of the solution to problem ($P_{1/2}$).
The functional $J_{1/2}$ in problem (\ref{P12}) is Frech\'et differentiable with respect to $\bm u$. Furthermore, for all $\bm u, {\bm v}\in {\bm V}^{1/2}(\Gamma)$,  by \eqref{P12} and \eqref{def_T} we have
\begin{align*}
J_{1/2}'({\bm u}){\bm v} &=\langle {\Talpha} {\bm u} -{\gzero},{\bm v}\rangle_\Gamma\\
& =\langle \alpha \bm D\bm u + \bm E^\star(\bm E \bm u-{\bm y}_d), \bm v \rangle_{\Gamma}\\
& = \langle \alpha(\partial_{\bm n} {\bm y}_{\bm u}-p_{\bm u}{\bm n})-(\partial_{\bm n} \bm z(\bm y_{\bm u}-\bm y_d)-q(\bm y_{\bm u}-\bm y_d){\bm n}),{\bm v}\rangle_\Gamma,
\end{align*}
where we used \eqref{char_Du}, \eqref{def_E} and  \eqref{def_Estar} in the last equality.

Now we are in the position to derive the regularity of the solution to the minimization problem \eqref{P12}.
\begin{theorem}\label{Theorem4.1}
	Assume ${\bm y}_d\in {\bm H}^{m}(\Omega)$ for some $0\leqslant m <\min\{2,1+\xi\}$, and
	let $\bar{\bm u}\in \bm V^{1/2}(\Gamma)$ be the optimal solution of problem (\ref{P12}). Then $\bar{\bm u}\in {\bm V}^{{1/ 2}+{r}}(\Gamma)$ for all $r<\min\{1,\xi\}$ and there exist $\bar {\bm y}\in {\bm V}^{1+r}(\Omega)$, $\bar p\in H^{r}(\Omega)\cap L_0^2(\Omega)$, $\bar {\bm z}\in {\bm V}^{1+t}(\Omega)\cap \bm H^1_0(\Omega)$ and $\bar q\in H^t(\Omega)\cap L_0^2(\Omega)$ for all $t\leqslant 1+m$ such that $t<\xi$ that satisfy the state equation
	\begin{align*}
	-\Delta\bar{\bm y}+\nabla \bar p &=\bm 0 \quad  \text{in}\ \Omega,\\
	\nabla\cdot\bar{\bm y}&=0 \quad  \text{in}\ \Omega,\\
	\bar{\bm y}&=\bar{\bm u} \quad  \text{on}\  \Gamma,\\
	(\bar p,1)&=0,
	\end{align*}
	the adjoint state equation
	\begin{align*}
	-\Delta\bar{\bm z}+\nabla \bar q &=\bar{\bm y} -\bm y_d \quad  \text{in}\ \Omega,\\
	\nabla\cdot\bar{\bm z}&=0 \quad  \text{in}\ \Omega,\\
	\bar{\bm z}&=0 \quad  \text{on}\  \Gamma,\\
	(\bar q,1)&=0,
	\end{align*}
	and the optimality condition
\begin{equation*}\langle\alpha(\partial_{\bm n} {\bar{\bm y}}-\bar p{\bm n}) - ( \partial_{\bm n} {\bar{\bm z}}-\bar q{\bm n}), \bm v\rangle_\Gamma = 0\ \quad\forall \bm v\in \bm V^{1/2}(\Gamma).\end{equation*}
Moreover, there exists $\bar\lambda\in\mathbb{R}$ such that
\begin{equation}\label{OCP_Stokes_u_Neumann}\alpha(\partial_{\bm n} {\bar{\bm y}}-\bar p{\bm n}) = \partial_{\bm n} {\bar{\bm z}}-(\bar q+\bar \lambda){\bm n}.
\end{equation}
\end{theorem}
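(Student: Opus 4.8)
The plan is to mirror the proof of \Cref{T2.8}. Since $J_{1/2}$ is strictly convex and coercive on $\bm V^{1/2}(\Gamma)$ (\Cref{property_T}) and Fr\'echet differentiable, the optimal $\bar{\bm u}$ is characterized by $J_{1/2}'(\bar{\bm u})\bm v=0$ for all $\bm v\in\bm V^{1/2}(\Gamma)$; inserting the expression for $J_{1/2}'$ derived just before the theorem (which uses \eqref{char_Du}, \eqref{def_E}, \eqref{def_Estar}) yields the stated optimality condition, with $(\bar{\bm y},\bar p)\in\bm V^1(\Omega)\times L^2_0(\Omega)$ the state associated to $\bar{\bm u}$ via \eqref{veryWeakForm0} (cf. \eqref{bound_yu_pu}) and $(\bar{\bm z},\bar q)=(\bm z(\bar{\bm y}-\bm y_d),q(\bar{\bm y}-\bm y_d))$ the adjoint state, i.e. the solution of \eqref{StokesRegularity} with $h=0$ and right-hand side $\bar{\bm y}-\bm y_d$. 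To extract $\bar\lambda$, note that every $\bm v\in\bm V^{1/2}(\Gamma)$ satisfies $\langle\bm n,\bm v\rangle_\Gamma=(\bm v\cdot\bm n,1)_\Gamma=0$, where $\bm n\in\bm L^2(\Gamma)\hookrightarrow\bm H^{-1/2}(\Gamma)$ since $\Gamma$ is polygonal; hence $\bm V^{1/2}(\Gamma)$ has codimension one in $\bm H^{1/2}(\Gamma)$ and its annihilator in $\bm H^{-1/2}(\Gamma)$ equals $\mathbb{R}\bm n$. The optimality condition says $\partial_{\bm n}\bar{\bm z}-\bar q\bm n-\alpha(\partial_{\bm n}\bar{\bm y}-\bar p\bm n)\in\bm H^{-1/2}(\Gamma)$ annihilates $\bm V^{1/2}(\Gamma)$, so it equals $\bar\lambda\bm n$ for a unique $\bar\lambda\in\mathbb{R}$, which is \eqref{OCP_Stokes_u_Neumann}; as in \Cref{T2.8} one may take $\bar\lambda=\lambda\big(\bm z(\bar{\bm y}-\bm y_d),q(\bar{\bm y}-\bm y_d)\big)$ from \eqref{Ray2.2}.

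For the regularity I would introduce $\widetilde{\bm y}:=\alpha\bar{\bm y}-\bar{\bm z}$, $\widetilde p:=\alpha\bar p-\bar q$. Combining the state and adjoint equations with \eqref{OCP_Stokes_u_Neumann} shows that $(\widetilde{\bm y},\widetilde p)$ solves
\[
-\Delta\widetilde{\bm y}+\nabla\widetilde p=\bm y_d-\bar{\bm y},\qquad \nabla\cdot\widetilde{\bm y}=0\ \ \text{in }\Omega,\qquad \partial_{\bm n}\widetilde{\bm y}-\widetilde p\bm n=-\bar\lambda\bm n\ \ \text{on }\Gamma,
\]
a Stokes problem with a traction (Neumann) boundary condition, and that $\bar{\bm u}=\bar{\bm y}|_\Gamma=\tfrac1\alpha\widetilde{\bm y}|_\Gamma$ because $\bar{\bm z}|_\Gamma=\bm 0$. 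Now bootstrap. From $\bar{\bm u}\in\bm V^{1/2}(\Gamma)$ and $\bar{\bm y}\in\bm V^1(\Omega)$ we get $\bar{\bm y}-\bm y_d\in\bm H^{\min\{1,m\}}(\Omega)$, so \Cref{Dauge55b} (and \Cref{Dauge55a} with $h=0$ when $\Omega$ is not convex, in which case $\xi<1$) gives $\bar{\bm z}\in\bm V^{1+t}(\Omega)\cap\bm H^1_0(\Omega)$, $\bar q\in H^t(\Omega)\cap L^2_0(\Omega)$ for $t\le1+\min\{1,m\}$ with $t<\xi$. Elliptic regularity for the Stokes traction problem on a polygon applied to $(\widetilde{\bm y},\widetilde p)$ then yields $\widetilde{\bm y}\in\bm H^{1+r}(\Omega)$, $\widetilde p\in H^r(\Omega)$ for every $r<\min\{1,\xi\}$, whence $\bar{\bm y}=\tfrac1\alpha(\widetilde{\bm y}+\bar{\bm z})\in\bm V^{1+r}(\Omega)$, $\bar p\in H^r(\Omega)\cap L^2_0(\Omega)$, and $\bar{\bm u}=\tfrac1\alpha\widetilde{\bm y}|_\Gamma\in\bm V^{1/2+r}(\Gamma)$ for such $r$. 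Feeding $\bar{\bm y}\in\bm H^{1+r}(\Omega)$ back in---choosing $r$ close to $\min\{1,\xi\}$, which exceeds $m-1$ since $m<\min\{2,1+\xi\}$---improves $\bar{\bm y}-\bm y_d$ to $\bm H^m(\Omega)$, so \Cref{Dauge55b} now gives $\bar{\bm z}\in\bm V^{1+t}(\Omega)$, $\bar q\in H^t(\Omega)\cap L^2_0(\Omega)$ for all $t\le1+m$, $t<\xi$; a last application of the traction estimate saturates at $\bar{\bm u}\in\bm V^{1/2+r}(\Gamma)$, $\bar{\bm y}\in\bm V^{1+r}(\Omega)$, $\bar p\in H^r(\Omega)\cap L^2_0(\Omega)$ for all $r<\min\{1,\xi\}$.

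The main obstacle is precisely the regularity statement used for the Stokes system with the non-symmetric traction boundary operator $\partial_{\bm n}\widetilde{\bm y}-\widetilde p\bm n$ on a polygonal domain: one must set up (or cite) the corner singular-function expansion for this boundary value problem and combine it with the fact that the datum $-\bar\lambda\bm n$ is only piecewise constant, jumping across every corner because $\bm n$ does. The jump forces the ceiling $r<1$ and the corner singularities force $r<\xi$; together they explain why the energy-space optimal control is strictly more regular than the $\bm V^0(\Gamma)$-control of \Cref{T2.8} (which merely inherits the regularity $\prod_i\bm H^{t-1/2}(\Gamma_i)$ of the adjoint traction) while still failing, in general, to be globally $\bm H^2(\Omega)$.
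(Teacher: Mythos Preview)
Your proof is correct and follows the same overall strategy as the paper: derive the Euler--Lagrange equation, extract the constant $\bar\lambda$, use \Cref{Dauge55a}/\Cref{Dauge55b} for the adjoint regularity, and then lift the state regularity by reducing to a Stokes problem with \emph{Neumann} (traction) boundary conditions, whose singular exponents coincide with the Dirichlet ones (the paper cites \cite{OrltSandig1995} for this last fact).

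The only noteworthy difference is how the Neumann problem is set up. The paper reads off the Neumann datum $\bm e=\partial_{\bm n}\bar{\bm z}-(\bar q+\bar\lambda)\bm n\in\prod_i\bm H^{r-1/2}(\Gamma_i)$ from the adjoint, invokes the polygonal trace theorem to find $\bm Y\in\bm H^{1+r}(\Omega)$ with $\partial_{\bm n}\bm Y=\bm e/\alpha$, and then applies the Neumann regularity to the \emph{compressible} pair $(\bar{\bm y}-\bm Y,\bar p)$ with homogeneous traction data and divergence $-\nabla\cdot\bm Y\in H^r(\Omega)$. You instead form $\widetilde{\bm y}=\alpha\bar{\bm y}-\bar{\bm z}$, $\widetilde p=\alpha\bar p-\bar q$, which solves an \emph{incompressible} Neumann problem with nonhomogeneous datum $-\bar\lambda\bm n$. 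Your route is a bit more direct (no extension, no compressibility term), while the paper's route reduces everything to a homogeneous-traction problem for which the regularity statement is cleanest to quote; both routes hit the same ceiling $r<\min\{1,\xi\}$, coming from the corner jumps of $\bm n$ (forcing $r<1$) together with the Stokes corner singularities (forcing $r<\xi$), exactly as you explain.
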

Here, both the state equation and the adjoint state equation must be understood in the variational sense.
\begin{proof}
The minimization problem, being a convex problem, is equivalent to the following Euler-Lagrange equation
\begin{align}\label{Euler_Lagrange}
J_{1/2}'({\bm u}){\bm v}=\langle {\Talpha} {\bm u}-{\gzero},{\bm v}\rangle_\Gamma =0\quad\forall {\bm v}\in {\bm V}^{1/2}(\Gamma).
\end{align}
The existence of a unique solution follows immediately from the Lax-Milgram theorem and \Cref{property_T}.
First order optimality conditions follow in a standard way. Taking $\bar\lambda = \lambda(\bar z,\bar q)$, we deduce relation \eqref{OCP_Stokes_u_Neumann} as we did for the $\bm L^2(\Gamma)$-regularized problem.

Since $\bar {\bm u}\in {\bm V}^{1/2}(\Gamma)$, by  \Cref{continuous_mapping} we have that $\bar {\bm y}\in \bm V^1(\Omega)$. From  \Cref{Dauge55a,Dauge55b}, we obtain $\bar {\bm z}\in {\bm V}^{1+t}(\Omega)$ and $\bar q\in H^t(\Omega)\cap L_0^2(\Omega)$ for all $t\leqslant \min\{2,1+m\}$ with $t<\xi$. Using the trace theorem (see \cite[Theorem 1.5.2.1]{Grisvard1985}) we arrive at
	\begin{eqnarray}
	{\bm e}:=\partial_{\bm n} \bar {\bm z}-(\bar q+\bar\lambda){\bm n}\in \prod_{i=1}^{n} {\bm H}^{t-1/2}(\Gamma_i)\subset
\prod_{i=1}^{n} {\bm H}^{r-1/2}(\Gamma_i)\ \quad\forall r<\min\{1,\xi\}.\nonumber
	\end{eqnarray}

		From the trace theorem again on polygons, see \cite[Theorem 1.5.2.1]{Grisvard1985} and also \cite[Remark 1.1, Chapter 1]{GR}, we know that there exists some ${\bm Y}\in {\bm H}^{1+r}(\Omega)$ such that $\partial_{\bm n} {\bm Y}={\bm e}/\alpha$ on $\Gamma$. So we have that ${\bm F}=\Delta {\bm Y}\in {\bm H}^{r-1}(\Omega)$ and $H=-\nabla\cdot {\bm Y}\in H^r(\Omega)$. Using the state equation and the optimality condition (\ref{OCP_Stokes_u_Neumann}), we deduce that the pair $(\bar{\bm y}-{\bm Y},\bar p)$ satisfies
		\begin{align*}
		-\Delta (\bar {\bm y}-{\bm Y})+ \nabla\bar p={\bm F}\ {\rm in}\ \Omega,\quad \nabla\cdot (\bar{\bm y}-{\bm Y})=H\ {\rm in}\ \Omega,\quad \partial_{\bm n}(\bar{\bm y}-{\bm Y})-\bar p{\bm n}=0\ {\rm on}\ \Gamma.
		\end{align*}
		This problem has a variational solution, which is unique up to a constant. Noticing that the singular exponents for the Stokes problem with Neumann boundary conditions are the same as those for Dirichlet boundary conditions, see e.g. \cite[pp.\ 191--192]{OrltSandig1995}, we deduce from  \Cref{Dauge55a} that $\bar {\bm y}\in {\bm H}^{1+r}(\Omega)$. From the standard trace theorem, we have that $\bar {\bm u}\in {\bm H}^{r+1/2}(\Gamma)$.
\end{proof}

{
\begin{remark}		
In this case, the optimal control is a continuous function even for problems posed on nonconvex domains; see the second subfigure  of \Cref{Figure_04} in  \Cref{Example3} below.
\end{remark}

In order to use the Aubin-Nitsche technique to obtain error estimates in $\bm L^2(\Gamma)$ for the control variable, we are also going to study, for any given $\bm \eta\in \bm L^2(\Gamma)$, the regularity of the unique solution $\bm u_{\bm\eta}\in \bm V^{1/2}(\Gamma)$ of the problem
\[\langle \Talpha \bm u_{\bm\eta},\bm v\rangle_\Gamma = (\bm\eta,\bm v)_\Gamma\ \quad\forall \bm v\in\bm V^{1/2}(\Gamma).\]
A straightforward computation using the definitions of ${\Talpha}$, ${\bm D}$ and ${\bm E}$, gives
\begin{align*}
\langle {\Talpha} {\bm u}_{\bm\eta} -{\bm\eta},{\bm v}\rangle_\Gamma =\langle \alpha(\partial_{\bm n} {\bm y}_{\bm u_{\bm\eta}}-p_{\bm u_{\bm\eta}}{\bm n})-(\partial_{\bm n} {\bm z}({\bm y}_{\bm u_{\bm\eta}})-q({\bm y}_{\bm u_{\bm\eta}}){\bm n})-\bm\eta ,{\bm v}\rangle_\Gamma
\end{align*}
for all ${\bm v}\in {\bm V}^{1/2}(\Gamma)$.
So we have that there exists some $\lambda\in\mathbb{R}$ such that $({\bm y}_{\bm u_{\bm\eta}},p_{\bm u_{\bm\eta}})$ solves the following Neumann problem
\begin{equation*}
\left\{\begin{array}{llr}
-\Delta {\bm y}_{\bm u_{\bm\eta}}+\nabla p_{\bm u_{\bm\eta}}=0\quad &\mbox{in}\ \Omega, \\
\ \qquad \nabla\cdot {\bm y}_{\bm u_{\bm\eta}}=0\quad &\mbox{in}\  \Omega,\\
\alpha(\partial_{\bm n} {\bm y}_{\bm u_{\bm\eta}}-p_{\bm u_{\bm\eta}}{\bm n})=\partial_{\bm n} {\bm z}({\bm y}_{\bm u_{\bm\eta}})-(q({\bm y}_{\bm u_{\bm\eta}})+\lambda)\bm n+\bm\eta\quad  &\mbox{on}\ \Gamma.
\end{array} \right.
\end{equation*}
Now we can follow the reasoning of \Cref{Theorem4.1}. In this case
\begin{align*}
	{\bm e}:=\partial_{\bm n} {\bm z}({\bm y}_{\bm u_{\bm\eta}})-(q({\bm y}_{\bm u_{\bm\eta}})+\lambda)\bm n+\bm\eta\in  {\bm L}^{2}(\Gamma),\nonumber
	\end{align*}
so we are in the same situation as before, but with $t=1/2$, which leads to $\bm u_{\bm\eta}\in \bm H^1(\Gamma)$. Notice that we do not need convexity to obtain this result.

\section{FEM for the Stokes Dirichlet energy space control problem}\label{FEM_in_E}

In this section,  we consider finite element approximations to the optimal control problem (\ref{P12}). {We also briefly mention finite element approximations to the problem (\ref{P0}) in Remarks \ref{Remark6}, \ref{remark:L2__control_problem_FEM}, \ref{remark:L2_control_problem_FEM_analysis}.}

First, we assume that the finite dimensional spaces $\bm Y_h\subset\bm H^1(\Omega)$ and $W_h\subset L^2(\Omega)$ satisfy the inf-sup condition:
For each $p_h\in W_h$ there exists a $\bm y_h\in \bm Y_h$ such that
\begin{align*}
\int_\Omega p_h\nabla\cdot \bm y_h dx = \|p_h\|^2_{L^2(\Omega)}\mbox{ and }\|\bm y_h\|_{\bm H^1(\Omega)}\leqslant C\|p_h\|_{L^2(\Omega)}.
\end{align*}
It is well known that the $\mathcal P_1+$ bubble -$\mathcal P_1$ ``Mini'' element or the $\mathcal P_{k+1}-\mathcal P_k$, $k\geqslant 1$, ``Taylor-Hood'' element satisfy the inf-sup condition.

Let  ${\bm Y}_h^0:={\bm Y}_h\cap {\bm H}_0^1(\Omega)$, $W^0_h=W_h\cap L_0^2(\Omega)$ and ${\bm Y}_h(\Gamma)\subset \bm H^{1/ 2}(\Gamma)$ be the trace of ${\bm Y}_h$. Let the discrete control space be given by
\begin{eqnarray}
{\bm U}_h:=\{\bm u_h\in {\bm Y}_h(\Gamma):\ (\bm u_h\cdot \bm n, 1)_\Gamma =0\}.\label{control_set}
\end{eqnarray}

Next, we define the discrete optimization problem:
\begin{equation}\label{P12h}%\tag{$P_{1/2,h}$}\label{P12h}
\min\limits_{{\bm u}_h\in {\bm U}_h} J_h({\bm u}_h)=\frac 1 2\|{\tildeE} \bm u_h -\bm y_{d,h}\|_{\bm L^2(\Omega)}^2+\frac{\alpha}{2} ( {\tildeD}{\bm u}_h,{\bm u}_h )_\Gamma,
\end{equation}
where $\bm y_{d,h}\in\bm Y_h$ is a suitable approximation of $\bm y_d$ in the sense that $\|\bm y_{d,h}-\bm y_d\|_{\bm L^2(\Omega)}\leqslant {Ch^r}$,  and the discrete operators $\bm D_h$ and $\bm E_h$ are given below. Here, and in the rest of the paper, $r<\min\{1,\xi\}$ is the exponent obtained in  \Cref{Theorem4.1}.

We define the operators $\bm E_h: \bm H^{1/2}(\Gamma) \to {\bm Y_h}$ and $P_h:  \bm H^{1/2}(\Gamma) \to W_h^0$ by
\begin{align}\label{def_Eh_Ph}
\bm E_h \bm u = \bm y_h,\quad P_h \bm u = p_h.
\end{align}
Here $(\bm y_h, p_h)$ is the finite element approximation of $(\bm y_{\bm u}, p_{\bm u})$, i.e., $(\bm y_h, p_h)$  satisfies
\begin{align}\label{yh_ph}
\begin{split}
(\nabla\bm y_h,\nabla\bm \zeta_h) -(p_h,\nabla\cdot \bm \zeta_h)  &= 0\ \qquad\forall \bm \zeta_h\in \bm Y_h^0, \\
(\chi_h,\nabla\cdot \bm y_h) &=   0\ \qquad\forall \chi_h\in W_h^0, \\
\bm y_h &=  \projLdh\bm u\ \mbox{ on }\Gamma,
\end{split}
\end{align}
where $\projLdh\bm u$ is the $\bm L^2$ projection of $\bm u$ onto $\bm U_h$. We note for later that $ \bm Q_h $ satisfies the following standard estimate
\begin{align}\label{esti_Q}
\|\bm Q_h\bm u - \bm u\|_{\bm H^{1/2}(\Gamma)}\leqslant  {C h^r \|\bm u\|_{\bm H^{r+1/2}(\Gamma)}\ \forall \bm u\in\bm V^{1/2}(\Gamma).}
\end{align}

Next, we give the discrete approximation of the stress force on the boundary, as introduced in \cite[Section 3]{GunzburgerHou1992}. For any $\bm g\in\bm L^2(\Omega)$, we define $(\bm z_h(\bm g),q_h(\bm g))\in \bm Y_h^0\times W_h^0$ to be the unique solution of
\begin{align*}
(\nabla\bm z_h(\bm g),\nabla\bm \zeta_h) -(q_h(\bm g),\nabla\cdot \bm \zeta_h)  &= (\bm g,\bm \zeta_h)\ \qquad\forall \bm \zeta_h\in \bm Y_h^0, \\
(\chi_h,\nabla\cdot \bm z_h(\bm g))  &=  0\ \qquad\qquad\forall \chi_h\in W_h^0.
\end{align*}
For $\bm g\in \bm L^2(\Omega)$ and $\bm u\in \bm V^{1/2}(\Gamma)$, let $\bm \psi_h = \bm z_h(\bm g)+\tildeE \bm u$ and $\phi_h = q_h(\bm g) + P_h\bm u$.
We define $\bm t_h (\bm g,\bm u)\in \bm Y_h(\Gamma)$ as the approximation of the stress force on the boundary of the pair $(\bm\psi_h,\phi_h)$:
\begin{align}\label{def_Sh}
(\bm t_h( \bm g,\bm u),\bm \zeta_h)_\Gamma = (\nabla\bm \psi_h,\nabla\bm \zeta_h) -(\phi_h,\nabla\cdot \bm \zeta_h)-(\bm g,\bm \zeta_h)\ \qquad\forall \bm \zeta_h\in \bm Y_h.
\end{align}

Notice that this is exactly the concept of discrete normal derivative; see \cite{Casas_Raymond_First_SICON_2006} or, better suited for our purposes, \cite{Winkler2020}. It is also important to notice that, for $\bm v_h\in \bm Y_h(\Gamma)$, we have that
\begin{align}\label{stress_th}
	(\bm t_h(\bm g,\bm u),\bm v_h)_\Gamma = (\nabla\bm \psi_h,\nabla\bm R_h \bm v_h) -(\phi_h,\nabla\cdot \bm R_h \bm v_h)-(\bm g,\bm R_h \bm v_h)\ \qquad\forall \bm v_h\in \bm Y_h(\Gamma)
\end{align}
for any linear extension operator $\bm R_h:\bm Y_h(\Gamma)\to \bm Y_h$. For instance, $\bm R_h$ could be the discrete harmonic extension, the operator $\bm E_h$, or the zero extension.

For $\bm u\in \bm V^{1/2}(\Gamma)$ we define $\bm D_h$ as the approximation of the stress force on the boundary of the pair $(\tildeE\bm u,P_h\bm u)$:
\begin{align}\label{def_Dh}
\bm D_h \bm u = \bm t_h(\bm 0,\bm u).
\end{align}

\begin{lemma}\label{semi_norm_D_h}
	$(	\bm D_h \bm u_h, \bm u_h )_{\Gamma}^{{1/2}}$ is a  seminorm in $\bm U_h$ equivalent to the $\bm H^{1/2}(\Gamma)$ norm.
\end{lemma}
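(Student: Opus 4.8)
The plan is to mirror the continuous argument in \Cref{semi_norm_D_h}'s continuous counterpart, namely the lemma establishing that $\langle \bm D\bm u,\bm u\rangle_\Gamma^{1/2}$ is a seminorm on $\bm V^{1/2}(\Gamma)$ equivalent to the $\bm H^{1/2}(\Gamma)$ seminorm, but now working entirely at the discrete level. First I would obtain a discrete analogue of the identity \eqref{EM2}: for $\bm u_h,\bm v_h\in\bm U_h$, using the extension $\bm R_h=\bm E_h$ in \eqref{stress_th} together with \eqref{def_Dh} and the Galerkin orthogonality built into \eqref{yh_ph} (testing against $\bm\zeta_h\in\bm Y_h^0$ and $\chi_h\in W_h^0$, and noting $\nabla\cdot\bm y_h$ is tested to zero), I expect to get
\[
(\bm D_h\bm u_h,\bm v_h)_\Gamma=(\nabla\bm E_h\bm u_h,\nabla\bm E_h\bm v_h)-(P_h\bm u_h,\nabla\cdot\bm E_h\bm v_h).
\]
The subtlety is that, unlike the continuous case where $\nabla\cdot\bm y_{\bm v}=0$ exactly, here $\nabla\cdot\bm E_h\bm v_h$ need not vanish pointwise; however the pressure term $(P_h\bm u_h,\nabla\cdot\bm E_h\bm v_h)$ can be absorbed because $P_h\bm u_h\in W_h^0$ and one can use the second equation of \eqref{yh_ph} for the pair $(\bm E_h\bm v_h,P_h\bm v_h)$ tested with $\chi_h=P_h\bm u_h$, giving $(P_h\bm u_h,\nabla\cdot\bm E_h\bm v_h)=0$. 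This yields the clean symmetric form $(\bm D_h\bm u_h,\bm v_h)_\Gamma=(\nabla\bm E_h\bm u_h,\nabla\bm E_h\bm v_h)$, from which symmetry, bilinearity, and nonnegativity of the associated quadratic form are immediate, so $(\bm D_h\bm u_h,\bm u_h)_\Gamma^{1/2}=\|\nabla\bm E_h\bm u_h\|_{\bm L^2(\Omega)}$ is a seminorm.

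Next I would prove the two-sided equivalence with $\|\bm u_h\|_{\bm H^{1/2}(\Gamma)}$ (which on $\bm U_h\subset\bm V^{1/2}(\Gamma)$ is a genuine norm by the $(\bm u_h\cdot\bm n,1)_\Gamma=0$ condition, as in \Cref{property_T}). For the upper bound, the discrete stability estimate for \eqref{yh_ph} — the finite element analogue of \eqref{bound_yu_pu}, which follows from the inf-sup condition on $\bm Y_h^0\times W_h^0$ and the boundedness of the $\bm L^2$ projection $\bm Q_h$ in $\bm H^{1/2}(\Gamma)$ — gives $\|\bm E_h\bm u_h\|_{\bm H^1(\Omega)}\leqslant C\|\bm Q_h\bm u_h\|_{\bm H^{1/2}(\Gamma)}=C\|\bm u_h\|_{\bm H^{1/2}(\Gamma)}$ (the last equality since $\bm u_h\in\bm U_h$ means $\bm Q_h\bm u_h=\bm u_h$). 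For the lower bound, the trace theorem gives $\|\bm u_h\|_{\bm H^{1/2}(\Gamma)}=\|\bm E_h\bm u_h\|_{\bm H^{1/2}(\Gamma)}\leqslant C\|\bm E_h\bm u_h\|_{\bm H^1(\Omega)}$; then I need a discrete Poincaré-type inequality bounding $\|\bm E_h\bm u_h\|_{\bm H^1(\Omega)}$ by $\|\nabla\bm E_h\bm u_h\|_{\bm L^2(\Omega)}$, which does \emph{not} follow from the usual Poincaré inequality since $\bm E_h\bm u_h$ need not vanish on $\Gamma$. Instead I would split $\bm E_h\bm u_h=\bm w_h+\bm R_h\bm u_h$ where $\bm w_h\in\bm Y_h^0$ and $\bm R_h$ is, say, the discrete harmonic extension (or use an abstract argument): since the map $\bm v_h\mapsto\|\nabla\bm v_h\|_{\bm L^2(\Omega)}$ is a norm on the quotient space and $\bm E_h$ is injective on $\bm U_h$ into a finite-dimensional space, a compactness/equivalence-of-norms argument on the fixed finite-dimensional space $\bm E_h(\bm U_h)$ closes it — but to keep $h$-uniformity one argues via the continuous equivalence: $\|\bm E_h\bm u_h\|^2_{\bm H^1}\leqslant C(\|\bm E_h\bm u_h\|^2_{\bm L^2}+\|\nabla\bm E_h\bm u_h\|^2_{\bm L^2})$ and $\|\bm E_h\bm u_h\|_{\bm L^2(\Omega)}\leqslant C\|\bm u_h\|_{\bm H^{1/2}(\Gamma)}$, so one actually works with $(\bm D_h\bm u_h,\bm u_h)_\Gamma=\|\nabla\bm E_h\bm u_h\|^2_{\bm L^2}\geqslant\min(1,\alpha)^{-1}\cdot$(something) — more directly, mimicking \Cref{property_T}: since $(\bm D_h\bm u_h,\bm u_h)_\Gamma=\|\nabla\bm E_h\bm u_h\|^2_{\bm L^2}$ and $\bm E_h\bm u_h$ solves a discrete Stokes problem, the discrete inf-sup condition yields a lower bound $\|\bm E_h\bm u_h\|_{\bm H^1(\Omega)}\leqslant C\|\nabla\bm E_h\bm u_h\|_{\bm L^2(\Omega)}$ uniformly in $h$ (this is the discrete counterpart of the fact that on divergence-constrained spaces the $\bm H^1$ norm and the seminorm are equivalent; here it holds because $\bm E_h\bm u_h$ lies in the kernel of the discrete divergence paired against $W_h^0$ and the trace is controlled).

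The step I expect to be the main obstacle is precisely this last $h$-uniform lower bound, i.e., showing $\|\nabla\bm E_h\bm u_h\|_{\bm L^2(\Omega)}\geqslant c\|\bm u_h\|_{\bm H^{1/2}(\Gamma)}$ with $c>0$ independent of $h$. The danger is that $\bm E_h\bm u_h$ could have a large constant or nearly-constant component that the gradient does not see, and the condition $(\bm u_h\cdot\bm n,1)_\Gamma=0$ is exactly what rules this out — I would make this precise by noting that if $\|\nabla\bm E_h\bm u_h\|_{\bm L^2(\Omega)}=0$ then $\bm E_h\bm u_h$ is constant, hence $\bm u_h$ is a constant boundary function, but a constant $\bm c$ with $(\bm c\cdot\bm n,1)_\Gamma=0$ forces $\bm c=0$ (as $(\bm c\cdot\bm n,1)_\Gamma=\bm c\cdot\int_\Gamma\bm n=0$ is automatic, so this argument needs the stronger observation that a constant solving the discrete Stokes system with that boundary data and zero body force must be zero once we also use that $\bm u_h$ lies in the trace space $\bm Y_h(\Gamma)$ of a conforming space and the constant is excluded by the quotient structure of $W_h^0$); more robustly, I would invoke the continuous estimate $\|\bm y_{\bm v}\|_{\bm H^1(\Omega)}\leqslant C\|\bm v\|_{\bm H^{1/2}(\Gamma)}\leqslant C'\|\nabla\bm y_{\bm v}\|_{\bm L^2(\Omega)}$ from \Cref{property_T}'s proof applied at the continuous level together with a Céa-type comparison showing $\bm E_h\bm u_h$ is quasi-optimal, or simply repeat the \Cref{property_T} computation verbatim with $\bm E_h,P_h$ in place of $\bm E,(\cdot)$ and the discrete inf-sup replacing the continuous one. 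In the writeup I would present the clean chain: $(\bm D_h\bm u_h,\bm u_h)_\Gamma=\|\bm E_h\bm u_h\|_{\bm L^2}^2\!\cdot\!0+\|\nabla\bm E_h\bm u_h\|_{\bm L^2}^2$ is not quite it — rather $(\bm D_h\bm u_h,\bm u_h)_\Gamma=\|\nabla\bm E_h\bm u_h\|^2_{\bm L^2(\Omega)}\geqslant C_2\|\bm u_h\|^2_{\bm H^{1/2}(\Gamma)}$ by the discrete inf-sup and trace theorem, and the upper bound $(\bm D_h\bm u_h,\bm u_h)_\Gamma\leqslant C_1\|\bm u_h\|^2_{\bm H^{1/2}(\Gamma)}$ by discrete stability, concluding equivalence of $(\bm D_h\bm u_h,\bm u_h)_\Gamma^{1/2}$ with the $\bm H^{1/2}(\Gamma)$ norm on $\bm U_h$.
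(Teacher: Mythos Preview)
Your derivation of the key identity $(\bm D_h\bm u_h,\bm u_h)_\Gamma=\|\nabla\bm E_h\bm u_h\|_{\bm L^2(\Omega)}^2$ is correct and is exactly what the paper does: take $\bm\zeta_h=\bm E_h\bm u_h$ in \eqref{def_Sh} with $\bm g=\bm 0$, then kill the pressure term using the second line of \eqref{yh_ph} tested with $\chi_h=P_h\bm u_h\in W_h^0$. The paper's proof consists of nothing more than this computation.

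Where you diverge from the paper is in the equivalence argument, and the difficulty you ran into is real but stems from the wording of the statement rather than a mathematical obstacle. The lemma says ``equivalent to the $\bm H^{1/2}(\Gamma)$ norm'', but this cannot be meant literally: as you yourself noticed, any constant vector $\bm c$ satisfies $(\bm c\cdot\bm n,1)_\Gamma=\bm c\cdot\int_\Gamma\bm n\,ds=0$, so constants lie in $\bm U_h$, and for these $\bm E_h\bm c=\bm c$, $\nabla\bm E_h\bm c=\bm 0$, hence $(\bm D_h\bm c,\bm c)_\Gamma=0$ while $\|\bm c\|_{\bm H^{1/2}(\Gamma)}\neq 0$. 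The continuous counterpart in the paper states equivalence with the $\bm H^{1/2}(\Gamma)$ \emph{seminorm}, and that is what is meant and needed here as well (coercivity of the full functional $J_h$ in the $\bm H^{1/2}$ norm comes later by adding the $\|\bm E_h\bm u_h\|_{\bm L^2(\Omega)}^2$ term, exactly as in \Cref{property_T}). So all the work you invested in an $h$-uniform lower bound $\|\nabla\bm E_h\bm u_h\|_{\bm L^2(\Omega)}\geqslant c\|\bm u_h\|_{\bm H^{1/2}(\Gamma)}$ was chasing a statement that is false.

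For the seminorm equivalence, both directions are short and $h$-uniform. Upper bound: discrete stability of \eqref{yh_ph} (inf--sup plus a stable lifting of the boundary data) gives $\|\bm E_h\bm u_h\|_{\bm H^1(\Omega)}\leqslant C\|\bm u_h\|_{\bm H^{1/2}(\Gamma)}$, hence $\|\nabla\bm E_h\bm u_h\|_{\bm L^2(\Omega)}\leqslant C|\bm u_h|_{\bm H^{1/2}(\Gamma)}+C\|\bm u_h\|_{\bm L^2(\Gamma)}$, but in fact one can subtract the mean and use Poincar\'e--Wirtinger to get the seminorm on the right. Lower bound: writing $\bar{\bm y}$ for the mean of $\bm E_h\bm u_h$ over $\Omega$, the trace theorem and Poincar\'e--Wirtinger give
\[
|\bm u_h|_{\bm H^{1/2}(\Gamma)}=|\bm u_h-\bar{\bm y}|_{\bm H^{1/2}(\Gamma)}\leqslant C\|\bm E_h\bm u_h-\bar{\bm y}\|_{\bm H^1(\Omega)}\leqslant C'\|\nabla\bm E_h\bm u_h\|_{\bm L^2(\Omega)},
\]
with constants depending only on $\Omega$. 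The paper leaves all of this implicit.
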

\begin{proof}
	Notice that for $\bm u_h\in \bm U_h\subset \bm V^{1/2}(\Gamma)$, using that $\tildeE\bm u_h\in \bm Y_h$ and $P_h\bm u_h\in W_h^0$, by  \eqref{def_Dh}, \eqref{def_Eh_Ph}  and \eqref{def_Sh}  we have
	\begin{align*}
	(\tildeD \bm u_h,\bm u_h)_\Gamma &=  ( \bm t_h(\bm 0,\bm u_h), \tildeE\bm u_h)_\Gamma \\
	% \textcolor{red}{ ( \tildeD \bm u_h,\bm u_h )_\Gamma } &=  ( \bm t_h(\bm 0,\bm u_h), \tildeE\bm u_h)_\Gamma \\
	%
	%
	%
	&=  (\nabla\tildeE \bm u_h,\nabla\tildeE \bm u_h) -(P_h\bm u_h,\nabla\cdot \tildeE \bm u_h) \\
	&=  (\nabla\tildeE \bm u_h,\nabla\tildeE \bm u_h),
	\end{align*}
	where we used  $(q_h,\nabla\cdot \tildeE \bm u_h) =   0$ for all $q_h\in W_h^0$ in the last equality.	This proves that $(	\bm D_h \bm u_h, \bm u_h )_{\Gamma}^{{1/2}}$ is a seminorm on $\bm H^{1/2}(\Gamma)$ for any $\bm u_h\in \bm U_h$.
\end{proof}

\begin{lemma}
	For every $\bm y_h\in \bm Y_h$ and $\bm v_h\in \bm U_h$, we have that
\[(\bm y_h,{\tildeE}\bm v_h) = ( - \bm t_h(\bm y_h,\bm 0) , v_h)_\Gamma,
\]
and the adjoint of the restriction of $\tildeE$ to $\bm U_h$ is given by
	\begin{align}\label{def_Eh_star}
	\tildeE^\star\bm y_h = - \bm t_h(\bm y_h,\bm 0).
	\end{align}
\end{lemma}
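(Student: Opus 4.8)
The plan is to unfold the definitions of $\tildeE$, $P_h$, $\bm z_h(\cdot)$, $q_h(\cdot)$ and $\bm t_h(\cdot,\cdot)$ and then cross-test, against each other, the two discrete Stokes systems defining $\tildeE\bm v_h$ and $\bm z_h(\bm y_h)$. First I would record that, since $\bm v_h\in\bm U_h$, we have $\bm Q_h\bm v_h=\bm v_h$, so by \eqref{yh_ph} the pair $(\tildeE\bm v_h,P_h\bm v_h)\in\bm Y_h\times W_h^0$ satisfies the discrete Stokes equations with boundary data $\bm v_h$; in particular $\tildeE\bm v_h$ is an admissible test function in $\bm Y_h$, it equals $\bm v_h$ on $\Gamma$, and it is discretely divergence-free against $W_h^0$. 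Also, taking $\bm u=\bm 0$ in \eqref{def_Eh_Ph} gives $\tildeE\bm 0=\bm 0$ and $P_h\bm 0=0$, so the definition \eqref{def_Sh} of $\bm t_h(\bm y_h,\bm 0)$ (with $\bm g=\bm y_h$) reads
\[
(\bm t_h(\bm y_h,\bm 0),\bm\zeta_h)_\Gamma=(\nabla\bm z_h(\bm y_h),\nabla\bm\zeta_h)-(q_h(\bm y_h),\nabla\cdot\bm\zeta_h)-(\bm y_h,\bm\zeta_h)\qquad\forall\bm\zeta_h\in\bm Y_h.
\]

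The key step is to take $\bm\zeta_h=\tildeE\bm v_h$ in this identity. The left-hand side becomes $(\bm t_h(\bm y_h,\bm 0),\bm v_h)_\Gamma$ because $\tildeE\bm v_h=\bm v_h$ on $\Gamma$. On the right-hand side I would eliminate the first two terms: testing the momentum equation of \eqref{yh_ph} for $\tildeE\bm v_h$ with the admissible test function $\bm z_h(\bm y_h)\in\bm Y_h^0$ gives $(\nabla\tildeE\bm v_h,\nabla\bm z_h(\bm y_h))=(P_h\bm v_h,\nabla\cdot\bm z_h(\bm y_h))$, which vanishes since $P_h\bm v_h\in W_h^0$ and $\bm z_h(\bm y_h)$ is discretely divergence-free against $W_h^0$; similarly $(q_h(\bm y_h),\nabla\cdot\tildeE\bm v_h)=0$ because $q_h(\bm y_h)\in W_h^0$ and $\tildeE\bm v_h$ is discretely divergence-free against $W_h^0$. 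What remains is $(\bm t_h(\bm y_h,\bm 0),\bm v_h)_\Gamma=-(\bm y_h,\tildeE\bm v_h)$, which is exactly the first claim. The same computation can alternatively be read off from \eqref{stress_th} with the extension operator chosen as $\bm R_h=\tildeE$, which is legitimate on $\bm U_h$ since the restriction of $\tildeE$ to $\bm U_h$ is a genuine extension.

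Finally, the adjoint formula is immediate from the identity just proved: by definition the adjoint of the restriction $\tildeE|_{\bm U_h}\colon(\bm U_h,(\cdot,\cdot)_\Gamma)\to(\bm Y_h,(\cdot,\cdot))$ is the operator characterized by $(\bm y_h,\tildeE\bm v_h)=(\tildeE^\star\bm y_h,\bm v_h)_\Gamma$ for all $\bm v_h\in\bm U_h$ and $\bm y_h\in\bm Y_h$, and $-\bm t_h(\bm y_h,\bm 0)$ satisfies precisely this relation; hence $\tildeE^\star\bm y_h=-\bm t_h(\bm y_h,\bm 0)$, with the usual understanding that the pairing is tested only against $\bm U_h$.

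I do not expect a genuine obstacle, since everything reduces to finite-dimensional linear algebra once the definitions are unwound; the only points demanding care are the observation that $\bm v_h\in\bm U_h$ makes $\tildeE\bm v_h$ simultaneously an admissible test function in $\bm Y_h$ and a true extension of $\bm v_h$, and keeping careful track of which cross-terms vanish thanks to the incompressibility constraints being imposed over $W_h^0$ (rather than over all of $W_h$).
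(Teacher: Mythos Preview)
Your proposal is correct and follows essentially the same route as the paper: both test the definition of $\bm t_h(\bm y_h,\bm 0)$ with $\bm\zeta_h=\tildeE\bm v_h$, use the momentum equation of \eqref{yh_ph} with $\bm z_h(\bm y_h)\in\bm Y_h^0$ to rewrite $(\nabla\bm z_h(\bm y_h),\nabla\tildeE\bm v_h)$ as $(P_h\bm v_h,\nabla\cdot\bm z_h(\bm y_h))$, and then kill both cross-terms via the discrete incompressibility constraints in $W_h^0$. Your write-up is slightly more explicit about why $\tildeE\bm v_h|_\Gamma=\bm v_h$ (via $\bm Q_h\bm v_h=\bm v_h$) and why the $\bm\psi_h,\phi_h$ in \eqref{def_Sh} reduce to $\bm z_h(\bm y_h),q_h(\bm y_h)$, but the argument is the same.
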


\begin{proof}
	We define $\bm G_h\bm y_h$ as the discrete approximation of the negative stress force on the boundary of the pair $(\bm z_h(\bm y_h),q_h(\bm y_h))$:
	\begin{align*}
	\bm G_h\bm y_h = - \bm t_h(\bm y_h,\bm 0).
	\end{align*}
	
	Consider $\bm v_h\in \bm U_h$, notice that ${\tildeE}\bm v_h\in\bm Y_h$ and by definition it equals $\bm v_h$ on the boundary. Using the definition of approximate stress force on the boundary, the facts that both $(q_h(\bm y_h),\nabla\cdot \tildeE \bm v_h) =   0$ and $(P_h\bm v_h,\nabla\cdot z_h(\bm y_h)) = 0$, and also $q_h(\bm y_h),\,P_h\bm v_h\in W_h^0$, we obtain
	\begin{align*}
	(\bm G_h\bm y_h, \bm v_h)_\Gamma &=  - (\nabla\bm z_h(\bm y_h),\nabla{\tildeE}\bm v_h)+(q_h(\bm y_h),\nabla\cdot {\tildeE}\bm v_h)+(\bm y_h,{\tildeE}\bm v_h)\\
	&=  -(P_h\bm v_h,\nabla\cdot\bm z_h(\bm y_h)) +(\bm y_h,{\tildeE}\bm v_h)\\
	&=  (\bm y_h,{\tildeE}\bm v_h)
	\end{align*}
	and the proof is complete.
\end{proof}

\begin{lemma}
	Problem \eqref{P12h} has a unique solution $\bar {\bm u}_h$.
\end{lemma}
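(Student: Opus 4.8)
The plan is to carry out, at the discrete level, exactly the reduction used for the continuous problem \eqref{P12}: rewrite $J_h$ as a coercive quadratic functional on the finite–dimensional space $\bm U_h$ and then apply the Lax--Milgram lemma to the associated Euler--Lagrange equation (equivalently, the direct method for strictly convex, coercive functionals). First I would use the adjoint identity \eqref{def_Eh_star}, i.e.\ $(\bm E_h\bm u_h,\bm y_{d,h})=(-\bm t_h(\bm y_{d,h},\bm 0),\bm u_h)_\Gamma$ and $(\bm E_h^\star\bm E_h\bm u_h,\bm u_h)_\Gamma=\|\bm E_h\bm u_h\|^2_{\bm L^2(\Omega)}$ for $\bm u_h\in\bm U_h$, to put $J_h$ in the form
\[ J_h(\bm u_h)=\tfrac12\,(\bm T_h\bm u_h,\bm u_h)_\Gamma-(\bm w_h,\bm u_h)_\Gamma+\tfrac12\|\bm y_{d,h}\|^2_{\bm L^2(\Omega)}, \]
with $\bm T_h:=\alpha\bm D_h+\bm E_h^\star\bm E_h$ and $\bm w_h:=\bm E_h^\star\bm y_{d,h}=-\bm t_h(\bm y_{d,h},\bm 0)$, the discrete counterpart of \eqref{P12}--\eqref{def_T}. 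Arguing exactly as in the proof of \Cref{semi_norm_D_h} but keeping a general test function (take $\bm\zeta_h=\bm E_h\bm v_h$ in \eqref{def_Sh} and use $(P_h\bm u_h,\nabla\cdot\bm E_h\bm v_h)=0$, which holds because $\bm E_h\bm v_h$ satisfies the second equation in \eqref{yh_ph} and $P_h\bm u_h\in W_h^0$, while $\bm E_h\bm v_h|_\Gamma=\bm Q_h\bm v_h=\bm v_h$ for $\bm v_h\in\bm U_h$), one obtains $(\bm D_h\bm u_h,\bm v_h)_\Gamma=(\nabla\bm E_h\bm u_h,\nabla\bm E_h\bm v_h)$ for all $\bm u_h,\bm v_h\in\bm U_h$; hence $(\bm T_h\cdot,\cdot)_\Gamma$ is a symmetric bilinear form on $\bm U_h\times\bm U_h$, and its boundedness on this finite-dimensional space is automatic.

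The only point requiring genuine attention is the coercivity, which is the discrete analogue of \Cref{property_T}. For $\bm u_h\in\bm U_h$ one has $\bm E_h\bm u_h=\bm Q_h\bm u_h=\bm u_h$ on $\Gamma$, since $\bm Q_h$ is a projection onto $\bm U_h$; therefore the trace theorem gives $\|\bm u_h\|_{\bm H^{1/2}(\Gamma)}\leqslant C\,\|\bm E_h\bm u_h\|_{\bm H^1(\Omega)}$. Combining this with the identity above for $\bm v_h=\bm u_h$ yields, just as in \Cref{property_T},
\[ (\bm T_h\bm u_h,\bm u_h)_\Gamma=\|\bm E_h\bm u_h\|^2_{\bm L^2(\Omega)}+\alpha\|\nabla\bm E_h\bm u_h\|^2_{\bm L^2(\Omega)}\geqslant\min(1,\alpha)\,\|\bm E_h\bm u_h\|^2_{\bm H^1(\Omega)}\geqslant C'\,\|\bm u_h\|^2_{\bm H^{1/2}(\Gamma)}. \]
In particular $(\bm D_h\bm u_h,\bm u_h)_\Gamma=\|\nabla\bm E_h\bm u_h\|^2_{\bm L^2(\Omega)}\geqslant0$, so $J_h$ is indeed a convex quadratic functional whose Hessian $(\bm T_h\cdot,\cdot)_\Gamma$ is positive definite on $\bm U_h$.

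With boundedness and coercivity established, the Lax--Milgram lemma produces a unique $\bar{\bm u}_h\in\bm U_h$ solving the Euler--Lagrange equation $(\bm T_h\bar{\bm u}_h,\bm v_h)_\Gamma=(\bm w_h,\bm v_h)_\Gamma$ for all $\bm v_h\in\bm U_h$, and by convexity with positive definite Hessian this $\bar{\bm u}_h$ is the unique global minimizer of $J_h$; alternatively, coercivity forces $J_h(\bm u_h)\to+\infty$ as $\|\bm u_h\|_{\bm U_h}\to\infty$ and strict convexity forces uniqueness, so the direct method gives the same conclusion. I do not expect any real obstacle: the argument is routine and entirely parallel to \Cref{property_T}. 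The two points that deserve a moment's care are (i) recognizing, via the proof of \Cref{semi_norm_D_h}, that $(\bm D_h\bm u_h,\bm u_h)_\Gamma=\|\nabla\bm E_h\bm u_h\|^2_{\bm L^2(\Omega)}$, so the energy-space term is nonnegative and $J_h$ is convex, and (ii) using $\bm E_h\bm u_h|_\Gamma=\bm u_h$ for $\bm u_h\in\bm U_h$, which is what lets the trace theorem deliver the coercivity constant.
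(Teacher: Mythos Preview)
Your proposal is correct and follows essentially the same approach as the paper: both rely on \Cref{semi_norm_D_h} to identify $(\bm D_h\bm u_h,\bm u_h)_\Gamma=\|\nabla\bm E_h\bm u_h\|^2_{\bm L^2(\Omega)}$, from which coercivity and strict convexity of $J_h$ on $\bm U_h$ follow, giving existence and uniqueness. The paper's proof is a two-sentence sketch (``By \Cref{semi_norm_D_h}, it is standard to deduce that $J_h$ is coercive in $\bm U_h$. Since it is also strictly convex\ldots''), whereas you spell out the coercivity step via the trace theorem in exact parallel to \Cref{property_T} and then invoke Lax--Milgram; this is simply a more detailed rendering of the same argument.
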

\begin{proof}
	By \Cref{semi_norm_D_h}, it is standard to deduce that $J_h$ is coercive in $\bm U_h$. Since it is also strictly convex, problem \eqref{P12h} has a unique solution $\bar {\bm u}_h$.
\end{proof}

Following the same notation in \Cref{Stokes_enegy_space}, we define
\begin{align}\label{def_Th_wh}
{\tildeTalpha} = \alpha{\bm D_h} +\bm E_h^\star\bm E_h,\qquad \bm w_h =\bm E_h^\star\bm y_{d,h}.
\end{align}
Then the problem \eqref{P12h} can be rewritten as:
\begin{align}\label{dis_op_2}
J_h({\bm u}_h) = \frac 1 2( \bm T_h {\bm u_h}, {\bm u_h} )_\Gamma- ( \bm w_h, \bm u_h )_\Gamma + \frac{1}{2}\|\bm y_{d,h}\|_{\bm L^2(\Omega)}^2,
\end{align}
and the unique solution $\bar {\bm u}_h$ of the discrete problem satisfies the first order optimality condition
\begin{align}\label{Euler_Lagrange_pert}
( {\tildeTalpha}\bar{\bm u}_h,\bm v_h )_\Gamma=( {\tildegzero},\bm v_h)_\Gamma\quad \forall\bm v_h\in {\bm U}_h.
\end{align}

\begin{remark}\label{Remark6}
	The discretization of the problem \eqref{P0} is done in the same way. The solution of the discrete problem satisfies
	\[(\alpha \bm u_{0h} + \tildeE^\star\tildeE \bm u_{0h}, \bm v_h)_\Gamma = ({\tildegzero},\bm v_h) _\Gamma\quad \forall\bm v_h\in {\bm U}_h.\]
	Thanks to the remarkable result \cite[Theorem 5.2]{Berggren_Approximations_SINUM_2004}, the approximation of the transposition solution can be done using the discrete weak formulation given to compute $\bm E_h$.
\end{remark}

\subsection{Matrix representation of \eqref{P12h}}

% Let ${\bm Y}_h={\rm span}\{{ \bm\zeta}_n \}_{n=1}^N$ with ${ \bm\zeta}_n$ the nodal basis functions and ${\bm Y}_h^0={\rm span}\{{ \bm\zeta}_n \}_{n=1}^{N_0}$, where $N$ is the number of degrees of freedom in ${\bm Y}_h$ while $N_0$ is the number of interior degrees of freedom. Then we may define ${\bm Y}_h(\Gamma)={\rm span} \{{ \bm\zeta}_n \}_{n=N_0+1}^N$. Let $W_h={\rm span} \{\chi_n\}_{n=1}^M$ with $\chi_n$ the nodal basis functions and $M$ the number of degrees of freedom for $W_h$.

{Let ${\bm Y}_h={\rm span}\{{ \bm\zeta}_n \}_{n=1}^N$ and ${\bm Y}_h^0={\rm span}\{{ \bm\zeta}_n \}_{n=1}^{N_0}$, where $ \{ \bm\zeta_n \}_n$ are nodal basis functions for ${\bm Y}_h$ ordered so that the first $N_0$ basis functions all vanish on the boundary and the remaining $ N - N_0 $ basis functions do not.} Then we have ${\bm Y}_h(\Gamma)={\rm span} \{{ \bm\zeta}_n \}_{n=N_0+1}^N$. Let $W_h={\rm span} \{\chi_n\}_{n=1}^M$, {where $ \{ \chi_n \}_n $ are nodal basis functions for $W_h$.}
For any ${\bm y}_h\in \bm Y_h$, $\bm u_h\in \bm Y_h(\Gamma)$,  ${\bm z}_h\in \bm Y_{h0}$, $p_h\in W_h$, we write
\begin{gather*}
\bm y_h(x)=\sum\limits_{n=1}^N y_n{ \bm\zeta}_n(x)=\sum\limits_{n=1}^{N_0} y_{n}{ \bm\zeta}_n(x)+\sum\limits_{n=N_0+1}^N y_n{ \bm\zeta}_n(x),\\
\bm u_h(x)=\sum\limits_{n=N_0+1}^N  u_n{ \bm\zeta}_n(x),\quad {\bm z}_h=\sum\limits_{n=1}^{N_0}{z}_n{ \bm\zeta}_n(x),\quad p_h(x)=\sum\limits_{n=1}^M p_n\chi_n(x).
\end{gather*}

We denote $\underline{ y}=( y_1, y_2,\cdots,y_N)^T$, $\underline{ y}_I=( y_1, y_2,\cdots,y_{N_0})^T$, $\underline{ u}=( u_{N_0+1}, u_{N_0+2},\cdots,u_N)^T$, $\underline{z}=(z_1,z_2,\cdots,z_{N_0})^T$, and $\underline{p}=(p_1,p_2,\cdots,p_M)^T$. Instead of imposing the condition $(p_h,1)=0$, we choose $p_h$ such that $p_M=0$ and denote $\tilde{\underline{p}} =(p_1,p_2,\cdots,p_{M-1})^T$; see  \Cref{Remark3}.

We also let $\matriz{M}$ denote the mass matrix representing the standard inner product in $\bm L^2(\Omega)$, and let
$\matriz{K}$ denote the stiffness matrix representing the vector Laplace operator on the finite
element space ${\bm Y}_h$. Additionally, $\matriz{B}$ denotes the matrix representation of the divergence operator on the involved finite element spaces ${\bm Y}_h$ and $W_h$. We have
\begin{align*}
\matriz{M}= \left[\left({\bm\zeta}_j,{ \bm\zeta}_i\right)\right]_{i,j=1}^N;\quad
\matriz{K}=\left[\left(\nabla{ \bm\zeta}_j,\nabla{ \bm\zeta}_i\right)\right]_{i,j=1}^N;\quad
\matriz{B}=-\left[\left(\chi_i,\nabla\cdot{ \bm\zeta}_j\right)\right]_{i,j=1}^{i=M,j=N}.
\end{align*}
We also use the following submatrices
\begin{align*}
\matriz{M}_{0}&= \left[\left({ \bm\zeta}_j,{ \bm\zeta}_i\right)\right]_{i,j=1}^{i=N_0,j=N}; &
\matriz{M}_{00}&= \left[\left({ \bm\zeta}_j,{ \bm\zeta}_i\right)\right]_{i,j=1}^{N_0}; \\
\matriz{M}_{\Gamma 0} &= \left[\left({ \bm\zeta}_j,{ \bm\zeta}_i\right)\right]_{i=N_0+1,j=1}^{i=N,j=N_0}; &
\matriz{M}_{\Gamma\Gamma} &= \left[\left({ \bm\zeta}_j,{ \bm\zeta}_i\right)\right]_{i,j=N_0+1}^{N};\\
\matriz{K}_{0}&= \left[\left(\nabla{ \bm\zeta}_j,\nabla{ \bm\zeta}_i\right)\right]_{i,j=1}^{i=N_0,j=N}; &
\matriz{K}_{00}&= \left[\left(\nabla{ \bm\zeta}_j,\nabla{ \bm\zeta}_i\right)\right]_{i,j=1}^{N_0}; \\
\matriz{K}_{\Gamma 0}&= \left[\left(\nabla{ \bm\zeta}_j,\nabla{ \bm\zeta}_i\right)\right]_{i=N_0+1,j=1}^{i=N,j=N_0}; &
\matriz{K}_{\Gamma \Gamma}&= \left[\left(\nabla{ \bm\zeta}_j,\nabla{ \bm\zeta}_i\right)\right]_{i,j = N_0+1}^{N};\\
\matriz{B}_0&=- \left[\left(\chi_i,\nabla\cdot{ \bm\zeta}_j\right)\right]_{i,j=1}^{i=M,j=N_0}; &
\matriz{B}_\Gamma&=- \left[\left(\chi_i,\nabla\cdot{ \bm\zeta}_j\right)\right]_{i=1,j=N_0+1}^{i=M,j=N}.
\end{align*}
Since we impose the condition $p_M=0$, instead of $\matriz{B}$, we use
\begin{align*}
\tilde{\matriz{B}}=-\left[\left(\chi_i,\nabla\cdot{ \bm\zeta}_j\right)\right]_{i,j=1}^{i={M-1},j=N}, \
\tilde{\matriz{B}}_0=-\left[\left(\chi_i,\nabla\cdot{ \bm\zeta}_j\right)\right]_{i,j=1}^{i=M-1,j=N_0},\
\tilde{\matriz{B}}_\Gamma=-\left[\left(\chi_i,\nabla\cdot{ \bm\zeta}_j\right)\right]_{i=1,j=N_0+1}^{i=M-1,j=N}.
\end{align*}

Now,  we give the implementation details for the discrete optimization problem \eqref{P12h}. By \eqref{dis_op_2}, it is equivalent to solve the following equality constrained quadratic programming problem:
\begin{align}\label{dis_op_3}
	\begin{cases}
		\min\ \dfrac{1}{2}\langle\tildeTalpha \bm u_h,\bm u_h\rangle_\Gamma- (\bm w_h,\bm u_h)_\Gamma,\\
		\bm u_h\in \bm Y_h(\Gamma),\ (\bm u_h,\bm n)_{\Gamma}=0.
	\end{cases}
\end{align}

Let $N_\Gamma = \dim \bm Y_h(\Gamma)$.  Define $\matriz{T}\in \mathbb{R}^{N_\Gamma\times N_\Gamma}$ to be the matrix representation of $\tildeTalpha$, i.e., $\underline{v}^T\matriz{T} \underline{ u}  = (\tildeTalpha \bm u_h,\bm v_h)_\Gamma$ for all $\bm u_h$ and $\bm v_h\in \bm Y_h(\Gamma)$, $\underline{w}\in \mathbb{R}^{N_\Gamma}$ to be the vector representation of $\tildegzero$, i.e., $\underline{ u}^T\underline{w}=(\bm w_h,\bm u_h)_\Gamma$ for all $\bm u_h\in \bm Y_h(\Gamma)$, and $\underline{ c}\in \mathbb{R}^{N_\Gamma}$ to be the vector representation of the constraint, i.e., $\underline{ u}^T\underline{ c}=(\bm u_h,\bm n)_\Gamma$ for all $\bm u_h\in \bm Y_h(\Gamma)$. Then the problem \eqref{dis_op_3} can be rewritten as
\begin{align}\label{dis_op_4}
	\begin{cases}
		\min\ \dfrac{1}{2}\underline{ u}^T\matriz{T} \underline{ u}-\underline{ u}^T\underline{w},\\
		\underline{ u}\in \mathbb{R}^{N_\Gamma},\quad \underline{ u}^T\underline{ c}=0.
	\end{cases}
\end{align}

Next, we show how to get the vector representations of $\underline w$ and $\matriz{T} \underline{ u}$. To do this,  we first compute $\bm E_h^\star\bm y_{d,h}$. Consider the discrete extension operator $\bm R_h\bm v_h\in \bm Y_h$ such that $\bm R_h\bm v_h=\bm v_h$ on $\Gamma$ and $\bm R_h\bm v_h = \bm 0$ in the interior nodes of $\Omega$. By the definition of $\bm w_h$ in \eqref{def_Th_wh} and using \eqref{def_Eh_star} and \eqref{stress_th} we have
\begin{align*}
	\langle  \bm w_h,\bm v_h\rangle_\Gamma &= \langle \tildeE^\star\bm y_{d,h},\bm v_h\rangle_\Gamma\\
	&=-(\nabla {\bm z}_h({\bm y}_{d,h}),\nabla {\bm R}_h\bm v_h)+(q_h({\bm y}_{d,h}),\nabla\cdot {\bm R}_h\bm v_h)+({\bm y}_{d,h},{\bm R}_h\bm v_h)\\
	 &= (-\matriz{K}_{\Gamma 0}\underline{z}_d-\tilde{\matriz{B}}^T_{\Gamma}\tilde {\underline{q}}_d+{\matriz{M}_{\Gamma}}\underline{ y}_d )\cdot \underline{ v}.
\end{align*}
\LinesNumbered
\begin{algorithm}[H]
	\caption{computation of $\underline{ w} $}
	\DontPrintSemicolon
	compute  $(\underline z_d, \tilde{\underline{q}}_d)$  by solving
	\begin{align*}
	\matriz{K}_{00}\underline{z}_d+\tilde{\matriz{B}}^T_0 \tilde{\underline{q}}_d&=\matriz{M}_{0}\underline{ y}_d,\\
	\tilde{\matriz{B}}_{0}\underline{z}&=\underline{ 0}.
	\end{align*}
	\;
	set $\underline{ w} = -\matriz{K}_{\Gamma 0}\underline{z}_d-\tilde{\matriz{B}}^T_{\Gamma}\tilde {\underline{q}}_d+{\matriz{M}_{\Gamma}}\underline{ y}_d $.\;
\end{algorithm}
\vspace{1em}

By the definiton $\bm T_h$ in \eqref{def_Th_wh}, we need to compute $( \bm E_h^\star\bm E_h \bm u_h,\bm v_h)_\Gamma $ and $( {\tildeD}{\bm u}_h,\bm v_h )_\Gamma$. Using \eqref{def_Eh_Ph}, \eqref{def_Eh_star} and \eqref{stress_th} we have
\begin{align*}
( \bm E_h^\star\bm E_h \bm u_h,\bm v_h)_\Gamma &= ( \tildeE^\star\bm y_{h},\bm v_h)_\Gamma\\
&=-(\nabla {\bm z}_h({\bm y}_h),\nabla {\bm R}_h\bm v_h)+(q_h({\bm y}_h),\nabla\cdot {\bm R}_h\bm v_h)+({\bm y}_h,{\bm R}_h\bm v_h)\\
&=(-\matriz{K}_{\Gamma 0}\underline{z}-\tilde{\matriz{B}}^T_{\Gamma}\tilde {\underline{q}}+{\matriz{M}_{\Gamma}}\underline{ y} )\cdot \underline{ v}.
\end{align*}

Now we are in the position to derive the matrix representation of the perturbed Steklov-Poincar\'e operator ${\tildeD}$. Using  \eqref{def_Dh} and \eqref{stress_th} we have
\begin{align*}
 ( {\tildeD}{\bm u}_h,\bm v_h )_\Gamma& =(\nabla {\tildeE}\bm u_h,\nabla  {\bm R_h} \bm v_h)-( P_h{\bm u}_h,\nabla\cdot  {\bm R_h} \bm v_h)\\
&=(\matriz{K}_{\Gamma} \underline{ y} +\tilde{\matriz{B}}_\Gamma^T  \tilde{\underline{p}})\cdot \underline{v}.
\end{align*}
\LinesNumbered
\begin{algorithm}[H]
	\caption{computation of $\matriz{T} \underline{ u}$}
	\DontPrintSemicolon
	compute $(\underline y_I, \tilde{\underline{p}})$ by solving
	\begin{align*}
		\matriz{K}_{00}\underline{ y}_I+\tilde{\matriz{B}}^T_0  \tilde{\underline{p}}&=-\matriz{K}_{\Gamma0}^T\underline{ u},\\
		\tilde{\matriz{B}}_{0}\underline{ y}_I&=-\tilde{\matriz{B}}_\Gamma \underline{ u}.
	\end{align*}\;
	recover $\underline{ y} = (\underline{ y}_I,\underline{ u})^T$, and then compute $(\underline z, \tilde{\underline{q}})$ by solving
	\begin{align*}
		\matriz{K}_{00}\underline{z}+\tilde{\matriz{B}}^T_0 \tilde{\underline{q}}&=\matriz{M}_{0}\underline{ y},\\
		\tilde{\matriz{B}}_{0}\underline{z}&=\underline{ 0}.
	\end{align*}\;
	set $\matriz{T} \underline{ u} =
	\alpha(\matriz{K}_{\Gamma} \underline{ y}+\tilde{\matriz{B}}_\Gamma^T\tilde{\underline{p}}) + \matriz{M}_{\Gamma}\underline{ y} -\matriz{K}_{\Gamma0}\underline{z} -\tilde{\matriz{B}}^T_{\Gamma}\tilde{\underline{q}}$.
\end{algorithm}
\vspace{1cm}
The problem \eqref{dis_op_4} can  be easily transformed into an unconstrained problem following the so-called null space method; see e.g. \cite[page 462]{Nocedal-Wright1999}. We denote $\underline{c}^T = (c_1,\ldots,c_{N_\Gamma})$ and assume, without loss of generality, that $c_1\neq 0$. The columns of the null space of $\underline{c}^T$ form the matrix $\matriz{Z}\in\mathbb{R}^{N_\Gamma\times (N_\Gamma-1)}$ such that $\matriz{Z}_{1,j} = -c_{j+1}/c_1$ and $\matriz{Z}_{i+1,j} = \delta_{i,j}$ for $1\leqslant i,j\leqslant N_{\Gamma}-1$. We solve
\begin{align*}
\begin{cases}
\min\ \dfrac{1}{2}\underline{ x}^T\matriz{Z}^T\matriz{T}\matriz{Z} \underline{ x}-\underline{ x}^T\matriz{Z}^T\underline{w}\\
\underline{ x}\in \mathbb{R}^{N_\Gamma-1}	
\end{cases}
\end{align*}
and then recover $\underline{ u} = \matriz{Z}\underline{ x}$. The Lagrange multiplier related to the constraint can also be recovered by means of
\[\lambda = \frac{\underline{ c}^T(\matriz{T}\underline{ u} -\underline{w})}{\underline{ c}^T\underline{ c}}.\]

We can also write the ``big'' optimality system. Noticing that
\begin{align*}
( \tildeE^\star(\bm y_{h}-\bm y_{d,h}),\bm v_h)_\Gamma&=-(\nabla {\bm z}_h(\bm y_{h}-\bm y_{d,h}),\nabla {\bm R}_h\bm v_h)+(q_h(\bm y_{h}-\bm y_{d,h}),\nabla\cdot {\bm R}_h\bm v_h)\\
&\quad +(\bm y_{h}-\bm y_{d,h},{\bm R}_h\bm v_h)\\
&=(\matriz{M}_{\Gamma}(\underline{ y}-\underline{y_d})-\matriz{K}_{\Gamma0}\underline{z} -\tilde{\matriz{B}}^T_{\Gamma}\tilde {\underline{q}})\cdot \underline{ v},
\end{align*}
using that $\matriz{M}_{\Gamma}\underline{ y} = \matriz{M}_{\Gamma0}\underline{ y}_I+\matriz{M}_{\Gamma\Gamma}\underline{ u}$ and $\matriz{K}_{\Gamma}\underline{ y} = \matriz{K}_{\Gamma0}\underline{ y}_I+\matriz{K}_{\Gamma\Gamma}\underline{ u}$, and taking into account that $\underline{u} = \matriz{Z}\underline{x}$, we have
\[
\left(
\begin{array}{ccccc}
\matriz{K}_{00}           & \matriz{K}_{\Gamma 0}^T \matriz{Z}        & \tilde{\matriz{B}}_0^T  &                      &                    \\
\tilde{\matriz{B}}_0      & \tilde{\matriz{B}}_\Gamma \matriz{Z}      &                    &                      &                    \\
-\matriz{M}_{00}           & -\matriz{M}_{\Gamma 0}^T \matriz{Z}        &                    & \matriz{K}_{00}           & \tilde{\matriz{B}}_0^T  \\
&                                    &                    & \tilde{\matriz{B}}_0      &                    \\
{\matriz{Z}}^T {\matriz{A}_{\Gamma 0}} &
{\matriz{Z}}^T  {\matriz{A}_{\Gamma 0}} \matriz{Z}  &
{\matriz{Z}}^T\alpha \tilde{\matriz{B}}_\Gamma^T  &
-{\matriz{Z}}^T \matriz{K}_{\Gamma 0} &
-{\matriz{Z}}^T\tilde{\matriz{B}}_\Gamma^T
\end{array}
\right)
\left(
\begin{array}{c}
\underline{ y}_I \\
\underline{ x} \\
\tilde {\underline{p}} \\
\underline{z} \\
\tilde{\underline{q}}
\end{array}
\right)=
\left(
\begin{array}{c}
\underline{ 0} \\
\underline{ 0} \\
-\matriz{M}_0\underline{ y}_{d} \\
\underline{ 0} \\
{\matriz{Z}}^T\matriz{M}_\Gamma\underline{ y}_{d}
\end{array}
\right),
\]
where
\[ { \matriz{A}_{\Gamma 0} = \alpha \matriz{K}_{\Gamma 0} + \matriz{M}_{\Gamma 0}  \:\: \mbox{and} \:\:   { \matriz{A}_{\Gamma \Gamma} = \alpha \matriz{K}_{\Gamma \Gamma} + \matriz{M}_{\Gamma \Gamma}.} } \]
The above system is not symmetric. There exist alternative symmetric formulations, at the price of the inversion of the stiffness matrix; see e.g. \cite[eq. (3.32)]{Of_Phan_Steinbach_Energy_NM_2015} for an antisymmetric version.
\begin{remark}\label{remark:L2__control_problem_FEM}
	To solve the $\bm L^2$-regularized problem, the procedure is very similar. The only difference, cf. \cite{Mateos2017}, is the computation of $\matriz{T}\underline{u}$, which is done in the following way
	\[\matriz{T} \underline{ u} =
	\alpha\matriz{S}_{\Gamma\Gamma}\underline{ u}  + \matriz{M}_{\Gamma}\underline{ y} -\matriz{K}_{\Gamma0}\underline{z} -\tilde{\matriz{B}}^T_{\Gamma}\tilde{\underline{q}},\]
	where $S_{\Gamma\Gamma}$ is the mass matrix on the boundary,
	\[\matriz{S}_{\Gamma\Gamma}={( {\bm \zeta}_j,{\bm \zeta}_i)_{\Gamma}}_{i,j=N_0+1}^{ N}.\]
	An approximation of the quantity $\lambda_0$ can be done using the Lagrange multiplier by means of $\lambda_0 = -\lambda/|\Gamma|$.
\end{remark}

\subsection{Error analysis}

First, we state the main result in this section.
\begin{theorem}\label{error_analysis}
	Let $\bar{\bm u}\in \bm V^{r+1/2}(\Gamma)$, with $r<\min\{1,\xi\}$, be the unique solution of problem (\ref{P12}) and let $\bar{\bm u}_h\in {\bm U}_h$ be the solution of \eqref{P12h}. If the conditions in  \Cref{Theorem4.1} are all fulfilled, then
	\begin{align*}
	\|\bar{\bm u}-\bar{\bm u}_h\|_{\bm H^{1/2}(\Gamma)}\leqslant C h^r\|\bar{\bm u}\|_{\bm H^{r+1/2}(\Gamma)}.
	\end{align*}
\end{theorem}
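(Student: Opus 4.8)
The plan is to follow the classical Céa-type argument for optimal control problems, comparing the continuous optimality condition from \Cref{Theorem4.1} with the discrete one in \eqref{Euler_Lagrange_pert}, and to control the consistency errors introduced by the discrete operators $\bm D_h$, $\bm E_h$, and the data approximation $\bm y_{d,h}$. First I would exploit the uniform coercivity: since $\bm D_h$ induces a seminorm equivalent to the $\bm H^{1/2}(\Gamma)$ seminorm on $\bm U_h$ (\Cref{semi_norm_D_h}), the bilinear form $(\bm T_h\cdot,\cdot)_\Gamma$ is uniformly coercive on $\bm U_h$ with a constant independent of $h$, exactly mirroring \Cref{property_T}; I would need this uniformity to be genuinely $h$-independent, which follows from the trace theorem applied to the discrete harmonic (or Stokes) extension together with the discrete inf-sup stability of $(\bm Y_h, W_h)$. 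With this in hand, for the projection $\bm Q_h\bar{\bm u}\in\bm U_h$ one writes
\[
C_2\|\bar{\bm u}_h-\bm Q_h\bar{\bm u}\|_{\bm H^{1/2}(\Gamma)}^2 \leqslant (\bm T_h(\bar{\bm u}_h-\bm Q_h\bar{\bm u}),\bar{\bm u}_h-\bm Q_h\bar{\bm u})_\Gamma,
\]
and then inserts $\pm\bar{\bm u}$ and uses \eqref{Euler_Lagrange} and \eqref{Euler_Lagrange_pert} to turn the right-hand side into (i) an approximation term $\|\bar{\bm u}-\bm Q_h\bar{\bm u}\|_{\bm H^{1/2}(\Gamma)}$, estimated by \eqref{esti_Q} using $\bar{\bm u}\in\bm V^{r+1/2}(\Gamma)$, and (ii) a consistency term of the form $\langle(\bm T_h-\bm T)\bar{\bm u},\bm v_h\rangle_\Gamma + \langle\bm w-\bm w_h,\bm v_h\rangle_\Gamma$ tested against $\bm v_h=\bar{\bm u}_h-\bm Q_h\bar{\bm u}$.

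The heart of the proof is bounding this consistency term. Writing $\bm T_h-\bm T = \alpha(\bm D_h-\bm D) + (\bm E_h^\star\bm E_h - \bm E^\star\bm E)$, I would treat each piece via finite element error estimates for the underlying Stokes and adjoint Stokes problems. For $\bm D_h-\bm D$: using the characterizations $(\bm D_h\bar{\bm u},\bm v_h)_\Gamma = (\nabla\bm E_h\bar{\bm u},\nabla\bm R_h\bm v_h) - (P_h\bar{\bm u},\nabla\cdot\bm R_h\bm v_h)$ from \eqref{def_Dh}/\eqref{stress_th} and the analogous continuous identity \eqref{Steklov-Poincare}, the difference is governed by the velocity-pressure approximation error $\|\bm y_{\bar{\bm u}}-\bm E_h\bar{\bm u}\|_{\bm H^1(\Omega)} + \|p_{\bar{\bm u}}-P_h\bar{\bm u}\|_{L^2(\Omega)}$, which by standard mixed FEM estimates and the regularity $\bar{\bm y}\in\bm V^{1+r}(\Omega)$, $\bar p\in H^r(\Omega)$ from \Cref{Theorem4.1} is $O(h^r\|\bar{\bm u}\|_{\bm H^{r+1/2}(\Gamma)})$; crucially, $\bm v_h$ appears only through its $\bm H^{1/2}(\Gamma)$ norm since the extension $\bm R_h$ is $\bm H^{1/2}(\Gamma)\to\bm H^1(\Omega)$ bounded uniformly, so the term contributes $Ch^r\|\bar{\bm u}\|_{\bm H^{r+1/2}(\Gamma)}\|\bm v_h\|_{\bm H^{1/2}(\Gamma)}$. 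For $\bm E_h^\star\bm E_h - \bm E^\star\bm E$: I would write $\bm E_h^\star\bm E_h - \bm E^\star\bm E = \bm E_h^\star(\bm E_h-\bm E) + (\bm E_h^\star - \bm E^\star)\bm E$ and use the adjoint representations \eqref{def_Estar}, \eqref{def_Eh_star} together with the adjoint Stokes regularity $\bar{\bm z}\in\bm V^{1+t}(\Omega)$ ($t\leqslant 1+m$, $t<\xi$, so in particular the relevant power exceeds $r$) and the data error $\|\bm y_{d,h}-\bm y_d\|_{\bm L^2(\Omega)}\leqslant Ch^r$; each factor is again $O(h^r)$ in the appropriate norm, with $\bm v_h$ entering only via $\|\bm v_h\|_{\bm H^{1/2}(\Gamma)}$.

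Combining, the right-hand side is bounded by $Ch^r\|\bar{\bm u}\|_{\bm H^{r+1/2}(\Gamma)}\|\bar{\bm u}_h-\bm Q_h\bar{\bm u}\|_{\bm H^{1/2}(\Gamma)}$, and dividing gives $\|\bar{\bm u}_h-\bm Q_h\bar{\bm u}\|_{\bm H^{1/2}(\Gamma)}\leqslant Ch^r\|\bar{\bm u}\|_{\bm H^{r+1/2}(\Gamma)}$; the triangle inequality with \eqref{esti_Q} then finishes the estimate. The main obstacle I anticipate is not the abstract argument but establishing the $h$-uniform \emph{stability} bounds for the discrete Stokes problem with inhomogeneous Dirichlet data $\bm Q_h\bar{\bm u}$ on a polygonal domain together with sharp $L^2(\Omega)$ and $H^1(\Omega)$ finite element error estimates of order $h^r$ for the reduced-regularity solution (only $\bar{\bm y}\in\bm H^{1+r}(\Omega)$ with $r<1$, $\bar p\in H^r(\Omega)$); in particular one must ensure the boundary-data perturbation $\bm Q_h\bar{\bm u}-\bar{\bm u}$ does not pollute the interior estimate beyond $O(h^r)$, which requires a careful trace/lifting argument (this is where the estimate \eqref{esti_Q} and a discrete trace inequality enter), and one must confirm that the Neumann-type regularity invoked in \Cref{Theorem4.1} via \cite{OrltSandig1995} transfers to the discrete approximation of the stress force $\bm t_h$. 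A secondary technical point is verifying that all the ``$\bm v_h$ enters only through $\|\bm v_h\|_{\bm H^{1/2}(\Gamma)}$'' claims hold with constants independent of $h$, which hinges on choosing $\bm R_h$ to be a uniformly $\bm H^{1/2}(\Gamma)\to\bm H^1(\Omega)$-stable discrete extension.
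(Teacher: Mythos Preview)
Your approach is correct and runs parallel to the paper's, with a minor organizational difference. Instead of comparing $\bar{\bm u}_h$ directly to $\bm Q_h\bar{\bm u}$ via a Strang-type bound, the paper introduces an auxiliary Galerkin element $\widehat{\bm u}_h\in\bm U_h$ defined by $(\bm T\widehat{\bm u}_h,\bm v_h)_\Gamma=(\bm w,\bm v_h)_\Gamma$ for all $\bm v_h\in\bm U_h$ (the \emph{continuous} operator $\bm T$ on the discrete space). A C\'ea argument gives $\|\bar{\bm u}-\widehat{\bm u}_h\|_{\bm H^{1/2}(\Gamma)}\leqslant Ch^r\|\bar{\bm u}\|_{\bm H^{r+1/2}(\Gamma)}$, and then coercivity of $\bm T_h$ yields $\kappa\|\bar{\bm u}_h-\widehat{\bm u}_h\|_{\bm H^{1/2}(\Gamma)}^2\leqslant(\bm w_h-\bm w,\cdot)_\Gamma+((\bm T-\bm T_h)\widehat{\bm u}_h,\cdot)_\Gamma$, so the consistency term lands on $\widehat{\bm u}_h$ rather than $\bar{\bm u}$; the paper then inserts $\bar{\bm u}$ and $\bm Q_h\bar{\bm u}$ back into $(\bm D-\bm D_h)\widehat{\bm u}_h$ and $(\bm E^\star\bm E-\bm E_h^\star\bm E_h)\widehat{\bm u}_h$ via triangle inequalities. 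Your route is arguably more direct, since $\bm T_h\bm Q_h=\bm T_h$ makes the consistency term land immediately on $\bar{\bm u}$, whose regularity is known, avoiding that extra bookkeeping (and your ``approximation term (i)'' then only appears in the final triangle inequality). Either way, the technical core is identical: the stability and error bounds you flag as the main obstacle are exactly what the paper packages into two preparatory lemmas, one establishing $\|\bm t(\bm g,\bm v)-\bm t_h(\bm g,\bm v)\|_{\bm H^{-1/2}(\Gamma)}\leqslant Ch^r(\|\bm g\|_{\bm L^2(\Omega)}+\|\bm v\|_{\bm H^{r+1/2}(\Gamma)})$ by invoking \cite[Proposition~17]{GunzburgerHou1992}, and one deriving the operator-level errors for $\bm E_h$, $\bm E_h^\star$, $\bm D_h$ from it. Your concern about the Neumann regularity from \cite{OrltSandig1995} transferring to $\bm t_h$ is misplaced: that ingredient is used only in the continuous regularity argument of \Cref{Theorem4.1}, not in the discrete error analysis.
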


To prove \Cref{error_analysis}, we assume that the following approximation properties are satisfied (see \cite[Chapter II. Section 1.3]{GR}):
\begin{itemize}
  \item[(H1)] ({\em Approximation property of $\bm Y_h$}). There exists an operator $r_h\in\mathcal{L}(\bm H^2(\Omega),\bm Y_h)$ such that
   \begin{align*}
	   \|\bm y-r_h\bm y\|_{\bm H^1(\Omega)}\leqslant C h \|\bm y\|_{\bm H^2(\Omega)}\ \quad\forall\bm y\in \bm H^2(\Omega),
   \end{align*}
     $r_h$ preserves the boundary conditions, and
   \begin{align*}
   		\|\bm u - r_h\bm u\|_{\bm H^{1/2}(\Gamma)}\leqslant C h \|\bm u\|_{\bm H^{3/2}(\Gamma)}\ \quad\forall\bm u\in \bm H^{3/2}(\Gamma).
   \end{align*}
  \item[(H2)] ({\em Approximation property of $W_h$}). There exists an operator $S_h\in\mathcal{L}(L^2(\Omega),W_h)$ such that
    \begin{align*}
    \|p-S_h p\|_{L^2(\Omega)}\leqslant Ch \|p\|_{H^1(\Omega)}\ \quad\forall p\in H^1(\Omega).
    \end{align*}
\end{itemize}

These assumptions are satisfied by typical finite element spaces used to solve the Stokes equation, such as the $\mathcal P_1+$ bubble -$\mathcal P_1$ ``Mini'' element or the $\mathcal P_{k+1}-\mathcal P_k$, $k\geqslant 1$, ``Taylor-Hood'' element; see \cite[Chap. II, Secs. 4.1 and 4.2]{GR}, where we take $r_h$ to be the corresponding Lagrange interpolation operator and $S_h$ the $L^2(\Omega)$ projection.
}

\begin{lemma}\label{Lemma3}
	There exists a constant $C>0$ independent of $h$ such that for any $\bm g\in \bm L^2(\Omega)$ and $\bm v\in \bm V^{1/2}(\Gamma)$ we have
	\begin{align*}
	\|\bm t_h(\bm g,\bm v)\|_{\bm H^{-1/2}(\Gamma)}\leqslant C (\|\bm g\|_{\bm L^2(\Omega)}+\|\bm v\|_{\bm H^{1/2}(\Gamma)}).
	\end{align*}
	Moreover, if $\bm v\in \bm V^{r+1/2}(\Gamma)$, we have the error estimate
	\begin{align*}
	\|\bm t(\bm g,\bm v)-\bm t_h(\bm g,\bm v)\|_{\bm H^{-1/2}(\Gamma)}\leqslant C h^r (\|\bm g\|_{\bm L^2(\Omega)}+\|\bm v\|_{\bm H^{r+1/2}(\Gamma)}).
	\end{align*}
\end{lemma}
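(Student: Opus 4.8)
The plan is to estimate $\bm t_h(\bm g,\bm v)$ in $\bm H^{-1/2}(\Gamma)$ by testing against $\bm\mu\in\bm H^{1/2}(\Gamma)$, using two structural facts: $\bm t_h(\bm g,\bm v)\in\bm Y_h(\Gamma)$, and the pair $(\bm\psi_h,\phi_h)=(\bm z_h(\bm g)+\bm E_h\bm v,\,q_h(\bm g)+P_h\bm v)$ entering the representation \eqref{stress_th} is precisely the inf-sup-stable mixed finite element approximation of the incompressible Stokes pair $(\bm\psi,\phi)=(\bm z(\bm g)+\bm y_{\bm v},\,q(\bm g)+p_{\bm v})$ with boundary datum replaced by $\bm Q_h\bm v$. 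I would first collect the ingredients: (a) the quasi-optimality estimate $\|\bm\psi-\bm\psi_h\|_{\bm H^1(\Omega)}+\|\phi-\phi_h\|_{L^2(\Omega)}\le Ch^r(\|\bm g\|_{\bm L^2(\Omega)}+\|\bm v\|_{\bm H^{r+1/2}(\Gamma)})$ for $\bm v\in\bm V^{r+1/2}(\Gamma)$, obtained from the discrete inf-sup condition, (H1), (H2), \eqref{esti_Q} (to absorb the boundary-datum mismatch $r_h\bm v-\bm Q_h\bm v$ through a stable discrete lifting), and the regularity $\|\bm\psi\|_{\bm H^{1+r}(\Omega)}+\|\phi\|_{H^r(\Omega)}\le C(\|\bm g\|_{\bm L^2(\Omega)}+\|\bm v\|_{\bm H^{r+1/2}(\Gamma)})$ coming from \Cref{continuous_mapping} (for $\bm y_{\bm v},p_{\bm v}$) and \Cref{Dauge55a,Dauge55b} (for $\bm z(\bm g),q(\bm g)$); (b) a discrete extension $\bm R_h:\bm Y_h(\Gamma)\to\bm Y_h$ with $\|\bm R_h\bm w_h\|_{\bm H^1(\Omega)}\le C\|\bm w_h\|_{\bm H^{1/2}(\Gamma)}$ (e.g. the discrete harmonic extension); and (c) the $h$-uniform $\bm H^{1/2}(\Gamma)$-stability of the $\bm L^2(\Gamma)$-projection $\Pi_h$ onto $\bm Y_h(\Gamma)$, together with the negative-norm bound $\|\bm\mu-\Pi_h\bm\mu\|_{\bm H^{1/2-r}(\Gamma)}\le Ch^r\|\bm\mu\|_{\bm H^{1/2}(\Gamma)}$ (interpolation if $r\le1/2$, Aubin--Nitsche duality if $r>1/2$).

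For the stability bound, given $\bm\mu\in\bm H^{1/2}(\Gamma)$, since $\bm t_h(\bm g,\bm v)\in\bm Y_h(\Gamma)$ we have $(\bm t_h(\bm g,\bm v),\bm\mu)_\Gamma=(\bm t_h(\bm g,\bm v),\Pi_h\bm\mu)_\Gamma$, and \eqref{stress_th} with test function $\Pi_h\bm\mu$ and extension $\bm R_h$ gives
\[
(\bm t_h(\bm g,\bm v),\bm\mu)_\Gamma=(\nabla\bm\psi_h,\nabla\bm R_h\Pi_h\bm\mu)-(\phi_h,\nabla\cdot\bm R_h\Pi_h\bm\mu)-(\bm g,\bm R_h\Pi_h\bm\mu).
\]
The right-hand side is bounded by $(\|\bm\psi_h\|_{\bm H^1(\Omega)}+\|\phi_h\|_{L^2(\Omega)}+C\|\bm g\|_{\bm L^2(\Omega)})\|\bm R_h\Pi_h\bm\mu\|_{\bm H^1(\Omega)}$; using (b), (c) to bound $\|\bm R_h\Pi_h\bm\mu\|_{\bm H^1(\Omega)}\le C\|\bm\mu\|_{\bm H^{1/2}(\Gamma)}$ and the discrete Stokes stability $\|\bm\psi_h\|_{\bm H^1(\Omega)}+\|\phi_h\|_{L^2(\Omega)}\le C(\|\bm g\|_{\bm L^2(\Omega)}+\|\bm v\|_{\bm H^{1/2}(\Gamma)})$ (which also uses the $\bm H^{1/2}(\Gamma)$-stability of $\bm Q_h$ on $\bm V^{1/2}(\Gamma)$), then dividing by $\|\bm\mu\|_{\bm H^{1/2}(\Gamma)}$ and taking a supremum yields the first claim; only $\bm v\in\bm V^{1/2}(\Gamma)$ is used here.

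For the error estimate, fix $\bm\mu\in\bm H^{1/2}(\Gamma)$, let $\bm R\bm\mu\in\bm H^1(\Omega)$ be a continuous extension with $\|\bm R\bm\mu\|_{\bm H^1(\Omega)}\le C\|\bm\mu\|_{\bm H^{1/2}(\Gamma)}$, and set $\bm\eta=\bm R\bm\mu-\bm R_h\Pi_h\bm\mu\in\bm H^1(\Omega)$, whose trace on $\Gamma$ is $\bm\mu-\Pi_h\bm\mu$. Combining \eqref{def_S} (with $\bm\zeta=\bm R\bm\mu$ and, separately, $\bm\zeta=\bm\eta$) with \eqref{stress_th} (test function $\Pi_h\bm\mu$, extension $\bm R_h$) and regrouping via $\bm R\bm\mu=\bm\eta+\bm R_h\Pi_h\bm\mu$ gives the identity
\[
\langle\bm t(\bm g,\bm v)-\bm t_h(\bm g,\bm v),\bm\mu\rangle_\Gamma=\langle\bm t(\bm g,\bm v),\bm\mu-\Pi_h\bm\mu\rangle_\Gamma+(\nabla(\bm\psi-\bm\psi_h),\nabla\bm R_h\Pi_h\bm\mu)-(\phi-\phi_h,\nabla\cdot\bm R_h\Pi_h\bm\mu).
\]
The last two terms are controlled by $(\|\bm\psi-\bm\psi_h\|_{\bm H^1(\Omega)}+\|\phi-\phi_h\|_{L^2(\Omega)})\|\bm R_h\Pi_h\bm\mu\|_{\bm H^1(\Omega)}\le Ch^r(\|\bm g\|_{\bm L^2(\Omega)}+\|\bm v\|_{\bm H^{r+1/2}(\Gamma)})\|\bm\mu\|_{\bm H^{1/2}(\Gamma)}$ by (a) and (b). For the first term, $\bm v\in\bm V^{r+1/2}(\Gamma)$ with $r>0$ so \eqref{def_S_1} yields $\bm t(\bm g,\bm v)=\partial_{\bm n}\bm\psi-\phi\bm n$; edgewise trace theory together with $r-1/2<1/2$ (hence $\prod_i\bm H^{r-1/2}(\Gamma_i)=\bm H^{r-1/2}(\Gamma)$) gives $\bm t(\bm g,\bm v)\in\bm H^{r-1/2}(\Gamma)$ with norm $\le C(\|\bm\psi\|_{\bm H^{1+r}(\Omega)}+\|\phi\|_{H^r(\Omega)})\le C(\|\bm g\|_{\bm L^2(\Omega)}+\|\bm v\|_{\bm H^{r+1/2}(\Gamma)})$, and then $|\langle\bm t(\bm g,\bm v),\bm\mu-\Pi_h\bm\mu\rangle_\Gamma|\le\|\bm t(\bm g,\bm v)\|_{\bm H^{r-1/2}(\Gamma)}\|\bm\mu-\Pi_h\bm\mu\|_{\bm H^{1/2-r}(\Gamma)}\le Ch^r(\|\bm g\|_{\bm L^2(\Omega)}+\|\bm v\|_{\bm H^{r+1/2}(\Gamma)})\|\bm\mu\|_{\bm H^{1/2}(\Gamma)}$ by (c). Dividing by $\|\bm\mu\|_{\bm H^{1/2}(\Gamma)}$ and taking a supremum over $\bm\mu$ completes the proof.

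I expect the main obstacle to be ingredient (a): establishing the quasi-optimal $O(h^r)$ mixed approximation estimate with inhomogeneous Dirichlet data, which requires splitting the boundary datum, constructing an $h$-uniformly stable discrete lifting of $r_h\bm v-\bm Q_h\bm v$ into $\bm Y_h$, and — crucially — verifying that $\bm\psi\in\bm H^{1+r}(\Omega)$, $\phi\in H^r(\Omega)$ up to the corners with constants independent of $h$; on nonconvex polygons this forces the use of \Cref{Dauge55a} with a fixed arbitrarily small loss and is exactly why $r<\min\{1,\xi\}$ is assumed. The remaining mesh-dependent facts — $\bm H^{1/2}(\Gamma)$-stability of $\Pi_h$ and of $\bm Q_h$ on $\bm V^{1/2}(\Gamma)$, the negative-norm estimate for $\bm\mu-\Pi_h\bm\mu$, and the existence of a stable discrete extension — are standard on the quasi-uniform, shape-regular meshes under consideration and can be invoked as such.
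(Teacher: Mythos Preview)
Your argument is correct and considerably more detailed than the paper's own proof, which consists of a single sentence invoking \cite[Proposition~17]{GunzburgerHou1992} for the approximation of the stress force on the boundary together with the regularity results in \Cref{Dauge55b,continuous_mapping}. What you have written is, in effect, a self-contained reconstruction of that cited proposition adapted to the present Stokes setting: the decomposition $\langle\bm t-\bm t_h,\bm\mu\rangle_\Gamma=\langle\bm t,\bm\mu-\Pi_h\bm\mu\rangle_\Gamma+(\nabla(\bm\psi-\bm\psi_h),\nabla\bm R_h\Pi_h\bm\mu)-(\phi-\phi_h,\nabla\cdot\bm R_h\Pi_h\bm\mu)$ and the treatment of each term via (a) quasi-optimality of the mixed approximation with inhomogeneous boundary data, (b) a stable discrete lifting, and (c) stability and approximation properties of the boundary $\bm L^2$ projection, is exactly the standard route. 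The paper simply absorbs all of this into the citation.

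One small point to tighten: your edgewise trace argument for $\bm t(\bm g,\bm v)\in\bm H^{r-1/2}(\Gamma)$ is clean only when $r>1/2$, since for $r\le 1/2$ the pressure $\phi\in H^r(\Omega)$ does not have a classical trace. This is harmless in practice because the regime of interest is $r$ close to $\min\{1,\xi\}>1/2$, and for smaller $r$ the bound follows either by monotonicity in $r$ or by interpolating the linear map $\bm v\mapsto\bm t(\bm 0,\bm v)$ between $\bm V^{1/2}(\Gamma)\to\bm H^{-1/2}(\Gamma)$ and $\bm V^{r'+1/2}(\Gamma)\to\bm H^{r'-1/2}(\Gamma)$ for some $r'\in(1/2,\min\{1,\xi\})$.
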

\begin{proof}
	The error estimate follows directly from \cite[Proposition 17]{GunzburgerHou1992} and  \Cref{Dauge55b,continuous_mapping}.
\end{proof}

In the next lemma, we collect the approximation properties of ${\tildeE}$, $\tildeE^\star$ and ${\tildeD}$ that will be used to obtain the final error estimate.
\begin{lemma}\label{Lemma2}
	The approximate solution operators $\bm E_h:   {\bm V}^{1/ 2}(\Gamma)\rightarrow \bm L^2(\Omega)$, $\bm E_h^\star: \bm L^2(\Omega)\to  {\bm V}^{-1/ 2}(\Gamma)$, $\bm D_h: \bm H^{1/2}(\Gamma)\to \bm H^{-1/2}(\Gamma)$ are bounded, i.e., there exists a constant $C>0$ independent of $h$ such that
	\begin{subequations}
		\begin{align}
			\|\bm E_h \bm u \|_{\bm L^2(\Omega)}&\leqslant C \|\bm u\|_{\bm H^{1/2}(\Gamma)},\label{boundness_of_E_h}\\
			\|\bm E_h^\star \bm g \|_{\bm H^{-1/2}(\Gamma)}&\leqslant C \|\bm g\|_{\bm L^2(\Omega)},\label{boundness_of_E_h_star}\\
			\|\bm D_h \bm u\|_{\bm H^{-1/2}(\Gamma)}&\leqslant C \|\bm u\|_{\bm H^{1/2}(\Gamma)}.\label{boundness_of_D_h}
		\end{align}
	\end{subequations}
Moreover, for $\bm u\in    {\bm V}^{r+1/2}(\Gamma)$ and $\bm g\in \bm H^r(\Omega)$, the following error estimates hold:
	\begin{subequations}
		\begin{align}
		\|{\bm E}\bm u-{\tildeE}\bm u\|_{\bm L^2(\Omega)} &\leqslant  Ch^{r}\|\bm u\|_{ \bm H^{r+1/2}(\Gamma)},\label{E_error}\\
		\|{\bm E^\star}\bm g-\bm E_h^\star\bm g\|_{\bm H^{-1/2}(\Gamma)}&\leqslant  Ch^{r}\|\bm g\|_{ \bm H^{r}\bm(\Omega)},\label{Estar_error}\\
		\|\bm D \bm u - \bm D_h \bm u\|_{\bm H^{-1/2}(\Gamma)} &\leqslant C h^r \|\bm u\|_{\bm H^{r+1/2}(\Gamma)}.\label{error_boundness_of_D_h}
		\end{align}	
	\end{subequations}

\end{lemma}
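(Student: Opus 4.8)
The plan is to split the eight estimates into the six that reduce at once to \Cref{Lemma3} and the defining identities, and the two — the $\bm L^2(\Omega)$ stability \eqref{boundness_of_E_h} and the state error \eqref{E_error} — that require a (routine) finite element argument for the Stokes system with inhomogeneous Dirichlet data. For the immediate ones: by \eqref{def_Eh_star} and \eqref{def_Dh} we have $\bm E_h^\star\bm g=-\bm t_h(\bm g,\bm 0)$ and $\bm D_h\bm u=\bm t_h(\bm 0,\bm u)$, so \eqref{boundness_of_E_h_star} and \eqref{boundness_of_D_h} are precisely the stability bound of \Cref{Lemma3} specialized to $\bm v=\bm 0$ and to $\bm g=\bm 0$, respectively. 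Subtracting these identities from \eqref{def_Estar} and \eqref{charac_of_D} gives $\bm E^\star\bm g-\bm E_h^\star\bm g=-\big(\bm t(\bm g,\bm 0)-\bm t_h(\bm g,\bm 0)\big)$ and $\bm D\bm u-\bm D_h\bm u=\bm t(\bm 0,\bm u)-\bm t_h(\bm 0,\bm u)$, so the error bound of \Cref{Lemma3} (with $\bm v=\bm 0\in\bm V^{r+1/2}(\Gamma)$, resp. $\bm g=\bm 0$) yields \eqref{Estar_error} and \eqref{error_boundness_of_D_h}; for \eqref{Estar_error} one also uses the trivial bound $\|\bm g\|_{\bm L^2(\Omega)}\leqslant\|\bm g\|_{\bm H^r(\Omega)}$.

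For \eqref{boundness_of_E_h}: $\bm E_h\bm u=\bm y_h$ solves the discrete Stokes system \eqref{yh_ph} with Dirichlet data $\bm Q_h\bm u$. Picking any uniformly $\bm H^1$-bounded discrete lifting $\bm L_h:\bm Y_h(\Gamma)\to\bm Y_h$ (e.g. the discrete harmonic extension), the difference $\bm y_h-\bm L_h\bm Q_h\bm u$ lies in $\bm Y_h^0$ and solves a homogeneous-Dirichlet discrete Stokes problem with right-hand side determined by $\bm L_h\bm Q_h\bm u$; the discrete inf-sup condition then gives $\|\bm E_h\bm u\|_{\bm H^1(\Omega)}\leqslant C\|\bm L_h\bm Q_h\bm u\|_{\bm H^1(\Omega)}\leqslant C\|\bm Q_h\bm u\|_{\bm H^{1/2}(\Gamma)}\leqslant C\|\bm u\|_{\bm H^{1/2}(\Gamma)}$, the last step being the $\bm H^{1/2}(\Gamma)$-stability of the $\bm L^2(\Gamma)$-projection $\bm Q_h$ on quasi-uniform meshes. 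Then $\|\bm E_h\bm u\|_{\bm L^2(\Omega)}\leqslant\|\bm E_h\bm u\|_{\bm H^1(\Omega)}$ gives \eqref{boundness_of_E_h}.

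Finally, for \eqref{E_error}: since $\bm u\in\bm V^{r+1/2}(\Gamma)$ with $r<\min\{1,\xi\}$, \Cref{continuous_mapping} gives $\bm y_{\bm u}\in\bm V^{1+r}(\Omega)$ and $p_{\bm u}\in H^r(\Omega)/\mathbb R$ with norms controlled by $\|\bm u\|_{\bm H^{r+1/2}(\Gamma)}$. I would build an admissible comparison function $\bm v_h\in\bm Y_h$ with trace $\bm v_h|_\Gamma=\bm Q_h\bm u$ by correcting the interpolant $r_h\bm y_{\bm u}$ of (H1) with a uniformly bounded discrete lifting of $\bm Q_h\bm u-(r_h\bm y_{\bm u})|_\Gamma$; then (H1), the trace theorem, and \eqref{esti_Q} give $\|\bm y_{\bm u}-\bm v_h\|_{\bm H^1(\Omega)}\leqslant Ch^r\|\bm u\|_{\bm H^{r+1/2}(\Gamma)}$. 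Since $\bm y_h-\bm v_h\in\bm Y_h^0$, Galerkin orthogonality between \eqref{WeakForm} and \eqref{yh_ph} for test functions in $\bm Y_h^0\times W_h^0$, together with the discrete inf-sup condition, gives the quasi-optimal bound $\|\bm y_{\bm u}-\bm y_h\|_{\bm H^1(\Omega)}\leqslant C\big(\|\bm y_{\bm u}-\bm v_h\|_{\bm H^1(\Omega)}+\|p_{\bm u}-S_hp_{\bm u}\|_{L^2(\Omega)}\big)$; bounding the pressure term by (H2) interpolated down to the exponent $r$, and then using $\|\cdot\|_{\bm L^2(\Omega)}\leqslant\|\cdot\|_{\bm H^1(\Omega)}$, yields \eqref{E_error}.

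The main obstacle is the bookkeeping around the boundary-data mismatch $\bm Q_h\bm u\neq\bm u$ — constructing $\bm v_h$ with the exact discrete trace while keeping the $O(h^r)$ rate, and propagating the fractional regularity $r<1$ consistently through the interpolation and trace estimates — rather than any conceptually new difficulty. Alternatively, \eqref{E_error} can be read off directly from the finite element analysis of the Stokes problem underlying \cite[Proposition 17]{GunzburgerHou1992} combined with the regularity in \Cref{continuous_mapping}.
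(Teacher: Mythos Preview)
Your proposal is correct and follows essentially the same approach as the paper: the four estimates for $\bm E_h^\star$ and $\bm D_h$ are reduced to \Cref{Lemma3} via the identities \eqref{def_Eh_star} and \eqref{def_Dh}, exactly as the paper does. For the two $\bm E_h$ estimates the paper simply cites \cite[Theorem 15]{GunzburgerHou1992} together with $\bm H^1(\Omega)\hookrightarrow\bm L^2(\Omega)$, whereas you spell out the standard discrete-lifting/inf-sup/Galerkin argument that underlies that reference (and then note the Gunzburger--Hou citation yourself as an alternative); this is the same route, just unpacked.
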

\begin{proof}
The boundness of $\bm E_h$ and  the  approximation error follow directly from \cite[Theorem 15]{GunzburgerHou1992} and the continuous embedding $\bm H^1(\Omega)\hookrightarrow\bm L^2(\Omega)$. The remaining estimates can be easily obtained by \Cref{Lemma3}, \eqref{def_Eh_star}, and  \eqref{def_Dh}.
\end{proof}

Next, we introduce the following auxiliary problem: find $\widehat{\bm u}_h\in {\bm U}_h$ such that
\begin{align}\label{Euler_Lagrange_h}
( {\Talpha}\widehat {\bm u}_h,\bm v_h)_\Gamma=( {\gzero},\bm v_h)_\Gamma\quad \forall\bm v_h\in {\bm U}_h,
\end{align}
where ${\gzero}={\bm E}^\star{\bm y}_d\in {\bm V}^{-{1/ 2}}(\Gamma)$.
\begin{lemma}\label{Lemma1}
	Let $\bar{\bm u}\in \bm V^{r+1/2}(\Gamma)$, with $r<\min\{1,\xi\}$, be the unique solution of problem (\ref{P12}) and $\widehat{\bm u}_h\in {\bm U}_h$ be the solution of  \eqref{Euler_Lagrange_h}. If the conditions in  \Cref{Theorem4.1} are all fulfilled, then
	\begin{align}\label{error_u_uhat}
	\|\bar{\bm u}-\widehat{\bm u}_h\|_{\bm H^{1/2}(\Gamma)}\leqslant Ch^{r}\|\bar{\bm u}\|_{\bm H^{{1/2}+r}(\Gamma)}.
	\end{align}
\end{lemma}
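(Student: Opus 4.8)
The plan is to recognize that \eqref{Euler_Lagrange_h} is nothing but the \emph{conforming} Galerkin approximation of the continuous variational problem \eqref{Euler_Lagrange}. Indeed, $\bm U_h\subset\bm Y_h(\Gamma)\subset\bm H^{1/2}(\Gamma)$, and every element of $\bm U_h$ satisfies $(\bm u_h\cdot\bm n,1)_\Gamma=0$, so $\bm U_h\subset\bm V^{1/2}(\Gamma)$ and the same bilinear form $\langle\Talpha\,\cdot,\cdot\rangle_\Gamma$ is used in both the continuous and discrete problems. Since this form is bounded and coercive on $\bm V^{1/2}(\Gamma)$ by \Cref{property_T}, the standard C\'ea argument applies, and the only problem-specific ingredients are the regularity $\bar{\bm u}\in\bm V^{r+1/2}(\Gamma)$ from \Cref{Theorem4.1} and the projection estimate \eqref{esti_Q}.

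First I would record Galerkin orthogonality: testing \eqref{Euler_Lagrange} with $\bm v_h\in\bm U_h\subset\bm V^{1/2}(\Gamma)$ and subtracting \eqref{Euler_Lagrange_h} gives $\langle\Talpha(\bar{\bm u}-\widehat{\bm u}_h),\bm v_h\rangle_\Gamma=0$ for all $\bm v_h\in\bm U_h$. Then, for an arbitrary $\bm v_h\in\bm U_h$, using coercivity, then orthogonality with the test function $\bm v_h-\widehat{\bm u}_h\in\bm U_h$, and finally continuity,
\[
C_2\|\bar{\bm u}-\widehat{\bm u}_h\|_{\bm V^{1/2}(\Gamma)}^2\leqslant\langle\Talpha(\bar{\bm u}-\widehat{\bm u}_h),\bar{\bm u}-\widehat{\bm u}_h\rangle_\Gamma=\langle\Talpha(\bar{\bm u}-\widehat{\bm u}_h),\bar{\bm u}-\bm v_h\rangle_\Gamma\leqslant C_1\|\bar{\bm u}-\widehat{\bm u}_h\|_{\bm V^{1/2}(\Gamma)}\|\bar{\bm u}-\bm v_h\|_{\bm V^{1/2}(\Gamma)},
\]
so that $\|\bar{\bm u}-\widehat{\bm u}_h\|_{\bm V^{1/2}(\Gamma)}\leqslant(C_1/C_2)\inf_{\bm v_h\in\bm U_h}\|\bar{\bm u}-\bm v_h\|_{\bm V^{1/2}(\Gamma)}$.

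Finally I would choose $\bm v_h=\bm Q_h\bar{\bm u}\in\bm U_h$. By \Cref{Theorem4.1} we have $\bar{\bm u}\in\bm V^{r+1/2}(\Gamma)$ with $r<\min\{1,\xi\}$, so \eqref{esti_Q} applies and yields $\|\bar{\bm u}-\bm Q_h\bar{\bm u}\|_{\bm H^{1/2}(\Gamma)}\leqslant Ch^r\|\bar{\bm u}\|_{\bm H^{r+1/2}(\Gamma)}$; since the $\bm V^{1/2}(\Gamma)$ norm coincides with the $\bm H^{1/2}(\Gamma)$ norm, combining this with the quasi-best-approximation bound above gives \eqref{error_u_uhat}. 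There is no real obstacle in this lemma: all the substantive work sits in \Cref{Theorem4.1} (which supplies exactly the regularity needed to invoke \eqref{esti_Q}) and in \Cref{property_T} (coercivity and continuity of $\Talpha$); the lemma itself is the textbook C\'ea estimate, isolated here so that it can later be combined with the consistency errors $\Talpha-\tildeTalpha$ and $\gzero-\tildegzero$ (controlled via \Cref{Lemma2}) to prove \Cref{error_analysis}.
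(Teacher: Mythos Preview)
Your proof is correct and follows essentially the same C\'ea-type argument as the paper: Galerkin orthogonality from $\bm U_h\subset\bm V^{1/2}(\Gamma)$, then coercivity and continuity of $\Talpha$ from \Cref{property_T} to obtain quasi-best approximation. The only difference is the choice of comparison element: you take $\bm v_h=\projLdh\bar{\bm u}$ and invoke \eqref{esti_Q} directly, whereas the paper takes $\bm u_h^\star=r_h\bar{\bm u}$ from hypothesis (H1) and appeals to an interpolation argument (Brenner--Scott) to pass from the $\bm H^{3/2}(\Gamma)$ estimate in (H1) to the required $\bm H^{r+1/2}(\Gamma)$ rate; your route is marginally more direct since \eqref{esti_Q} already encodes that interpolation.
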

\begin{proof}
	First, by \eqref{Euler_Lagrange}, \eqref{Euler_Lagrange_h}, and ${\bm U}_h\subset \bm V^{1/2}(\Gamma)$, we have
	\begin{align}\label{orthogonal_propertity}
	( {\Talpha}(\bar{\bm u}-\widehat {\bm u}_h),\bm v_h)_\Gamma=0\quad \forall\bm v_h\in {\bm U}_h.	
	\end{align}
	Next, by \Cref{property_T}, we know that ${\Talpha}$ is $\bm V^{1/2}(\Gamma)$-elliptic and continuous. For any $\bm u_h^\star\in \bm U_h$, the error estimate follows in a standard way:
	\begin{align*}
	c\|\bar{\bm u}-\widehat{\bm u}_h\|_{\bm H^{1/2}(\Gamma)}^2 &\leqslant ( {\Talpha} (\bar{\bm u}-\widehat{\bm u}_h),\bar{\bm u}-\widehat{\bm u}_h)_\Gamma \\
	&=
	( {\Talpha} (\bar{\bm u}-\widehat{\bm u}_h),\bar{\bm u}-\bm u_h^\star)_\Gamma\\
	& \leqslant \|{\Talpha} (\bar{\bm u}-\widehat{\bm u}_h)\|_{\bm V^{-1/2}(\Gamma)} \|\bar{\bm u}-\bm u_h^\star\|_{\bm H^{1/2}(\Gamma)} \\
	& \leqslant C \|\bar{\bm u}-\widehat{\bm u}_h\|_{\bm H^{1/2}(\Gamma)} \|\bar{\bm u}-\bm u_h^\star\|_{\bm H^{1/2}(\Gamma)}.
	\end{align*}
	Therefore, there exists $C>0$ such that
	\begin{align*}
	\|\bar{\bm u}-\widehat{\bm u}_h\|_{\bm H^{1/2}(\Gamma)}\leqslant C\inf_{\bm u_h^\star\in \bm U_h} \|\bar{\bm u}-\bm u_h^\star\|_{\bm H^{1/2}(\Gamma)}.
	\end{align*}
	The result follows by interpolation (see e.g. \cite[Theorem (14.3.3)]{Brenner_FEMBook_2008}), where we take $\bm u_h^\star=r_h \bm u$ from (H1) and use the regularity of $\bar{\bm u}$ stated in  \Cref{Theorem4.1}.
	
\end{proof}

Now we give the proof of \Cref{error_analysis}.

\begin{proof}[Proof of \Cref{error_analysis}]
  Due to \Cref{Lemma1}, it is enough to obtain the error estimate for $\|\bar{\bm u}_h-\widehat{\bm u}_h\|_{\bm H^{1/2}(\Gamma)}$.

  By the definition of $\bm T_h$ in \eqref{def_Th_wh} and \Cref{semi_norm_D_h}, we know that $\bm T_h$ is coercive on $\bm U_h$.  By the first order conditions satisfied by $\widehat{\bm u}_h$ and $\bar{\bm u}_h$ in \eqref{Euler_Lagrange_h} and \eqref{Euler_Lagrange_pert} {and by} Young's inequality, we know that there exists a constant $\kappa$ independent of $h$ such that
  \begin{align*}
    \kappa \|\bar{\bm u}_h-\widehat{\bm u}_h\|_{\bm H^{1/2}(\Gamma)}^2 & \leqslant ( {\tildeTalpha} (\bar{\bm u}_h-\widehat{\bm u}_h), \bar{\bm u}_h-\widehat{\bm u}_h)_\Gamma\\
    &=  (\bm w_h-\bm w,\bar{\bm u}_h-\widehat{\bm u}_h\rangle_\Gamma +
    \langle({\Talpha}-{\tildeTalpha})\widehat{\bm u}_h, \bar{\bm u}_h-\widehat{\bm u}_h)_\Gamma\\
     &\leqslant \frac{2}{\kappa}\|\bm w_h-\bm w\|_{\bm H^{-1/2}(\Gamma)}^2
    +\frac{2}{\kappa}\|({\Talpha}-{\tildeTalpha})\widehat{\bm u}_h\|_{\bm H^{-1/2}(\Gamma)}^2
    +\frac{\kappa}{4}\|\bar{\bm u}_h-\widehat{\bm u}_h\|_{\bm H^{1/2}(\Gamma)}^2.
  \end{align*}
Hence, by the definitions of $\bm T$ and $\bm T_h$ in \eqref{def_T} and \eqref{def_Th_wh} we have
\begin{align*}
	\hspace{1em}&\hspace{-1em}\frac{3}{4}\kappa\|\bar{\bm u}_h-\widehat{\bm u}_h\|_{\bm H^{1/2}(\Gamma)}^2\\
	&\leqslant \frac{2}{\kappa}\|\bm w_h-\bm w\|_{\bm H^{-1/2}(\Gamma)}^2
	+\frac{2}{\kappa}\|({\Talpha}-{\tildeTalpha})\widehat{\bm u}_h\|_{\bm H^{-1/2}(\Gamma)}^2\\
	& \leqslant  C\left(\|\bm w_h-\bm w\|_{\bm H^{-1/2}(\Gamma)} + \| ({\bm D}-{\tildeD})\widehat{\bm u}_h\|_{\bm H^{-1/2}(\Gamma)} +  \|({\bm E^\star\bm E}-\tildeE^\star\tildeE)\widehat{\bm u}_h\|_{\bm H^{-1/2}(\Gamma)}\right)^2\\
	&= C\left(S_1 + S_2 + S_3\right)^2.
\end{align*}

For the first term $S_1$, using   the approximation properties of $\bm y_{d,h}$, \eqref{def_T}, \eqref{def_Th_wh}, \eqref{Estar_error} and \eqref{boundness_of_E_h_star}, we get
  \begin{align*}
	 S_1 &= \|\bm w_h-\bm w\|_{\bm H^{-1/2}(\Gamma)}\\
	 &  = \|\tildeE^\star \bm y_{d,h} -\bm E^\star \bm y_d \|_{\bm H^{-1/2}(\Gamma)}\\
	 &\leqslant \| (\tildeE^\star -\bm E^\star) \bm y_{d}\|_{\bm H^{-1/2}(\Gamma)}
	 +\|\bm E_h^\star(\bm y_{d,h}- \bm y_d)\|_{\bm H^{-1/2}(\Gamma)}\\
	  &\leqslant C h^r.
 \end{align*}

For the second term $S_2$, by the definition of $\bm D_h$ in \eqref{def_Dh} we know that $\bm D_h\bm Q_h = \bm D_h$, where $\bm Q_h$ is the $L^2$ projection. We have%Then we insert  the solution $\bar{\bm u}$ and its projection $\bm Q_h\bar{\bm u}$  onto $\bm U_h$ to get
\begin{align*}
  S_2 &= \| ({\bm D}-{\tildeD})\widehat{\bm u}_h \|_{\bm H^{-1/2}(\Gamma)} \\
  &\leqslant
  \| {\bm D} (\widehat{\bm u}_h -\bar{\bm u})\|_{\bm H^{-1/2}(\Gamma)}
   +  \|\bm D\bar{\bm u} -{\tildeD}\bm Q_h\bar{\bm u}\|_{\bm H^{-1/2}(\Gamma)} + \| ({\tildeD}(\bm Q_h\bar{\bm u} -\widehat{\bm u}_h) \|_{\bm H^{-1/2}(\Gamma)}\\
  &\leqslant  C \|\widehat{\bm u}_h -\bar{\bm u}\|_{\bm H^{1/2}(\Gamma)} + \|\bm D\bar{\bm u} -{\tildeD}\bar{\bm u}\|_{\bm H^{-1/2}(\Gamma)} + C\|\bm Q_h\bar{\bm u} -\widehat{\bm u}_h\|_{\bm H^{1/2}(\Gamma)},
  \end{align*}
  where we used \eqref{boundness_of_D} and \eqref{boundness_of_D_h} in the last inequality. Next, by \eqref{error_u_uhat}, \eqref{error_boundness_of_D_h}, and \eqref{esti_Q} we have
  \begin{align*}
  S_2 & \leqslant  C \|\widehat{\bm u}_h -\bar{\bm u}\|_{\bm H^{1/2}(\Gamma)} +
\|\bm D\bar{\bm u} -{\tildeD}\bar{\bm u}\|_{\bm H^{-1/2}(\Gamma)}+ C\|\bar{\bm u} -\widehat{\bm u}_h\|_{\bm H^{1/2}(\Gamma)} +   C\|\bm Q_h\bar{\bm u} -\bar{\bm u}\|_{\bm H^{1/2}(\Gamma)}\\
  &\leqslant  C h^r\|\bar{\bm u}\|_{\bm H^{r+1/2}(\Gamma)}.
\end{align*}

Next,  for the term $S_3$ we proceed similarly to $S_2$.  Using the fact that $\bm E_h\bm Q_h = \bm E_h$, we have %  : insert the solution $\bar{\bm u}$ and its projection $\bm Q_h\bar{\bm u}$  onto $\bm U_h$ and use the fact that $\bm E_h\bm Q_h = \bm E_h$. We have
\begin{align*}
  S_3 & = \|({\bm E^\star\bm E}-\tildeE^\star\tildeE)\widehat{\bm u}_h\| _{\bm H^{-1/2}(\Gamma)}\\
  & \leqslant
  \|{\bm E^\star\bm E}(\widehat{\bm u}_h -\bar{\bm u})\|_{\bm H^{-1/2}(\Gamma)}
   +  \|(\bm E^\star -\tildeE^\star)\bm E\bar{\bm u}\|_{\bm H^{-1/2}(\Gamma)}\\
   &\quad +  \|\tildeE^\star (\bm E\bar{\bm u} -\tildeE\bm Q_h\bar{\bm u})\|_{\bm H^{-1/2}(\Gamma)} + \| (\tildeE^\star\tildeE(\bm Q_h\bar{\bm u} -\widehat{\bm u}_h) \|_{\bm H^{-1/2}(\Gamma)}\\
   &\leqslant  C \|\widehat{\bm u}_h -\bar {\bm u}\|_{\bm H^{1/2}(\Gamma)} + C h^r \|\bm E\bar{\bm u}\|_{\bm H^{r+1}(\Omega)}+ C \|\bm E\bar{\bm u} -\tildeE\bar{\bm u}\|_{\bm L^{2}(\Omega)} + C\|\bm Q_h\bar{\bm u} -\widehat{\bm u}_h\|_{\bm H^{1/2}(\Gamma)},
   \end{align*}
   where we used \eqref{boundness_of_EstarE}, \eqref{Estar_error}, \eqref{boundness_of_E_h_star},  and \eqref{boundness_of_E_h} in the last inequality. By
   \eqref{error_u_uhat}, \eqref{E_error} and \eqref{esti_Q} we have
   \begin{align*}
    S_3 &\leqslant  C \|\widehat{\bm u}_h -\bar {\bm u}\|_{\bm H^{1/2}(\Gamma)} + C h^r \|\bm E\bar{\bm u}\|_{\bm H^{r+1}(\Omega)} + C \|\bm E\bar{\bm u} -\tildeE\bar{\bm u}\|_{\bm L^{2}(\Omega)} + C\|\bm Q_h\bar{\bm u} -\bar{\bm u}\|_{\bm H^{1/2}(\Gamma)}\\
   &\leqslant  C h^r\|\bar{\bm u}\|_{\bm H^{r+1/2}(\Gamma)}.
\end{align*}
Collecting all the estimates completes the proof.
\end{proof}

\begin{remark}
 The application of the Aubin-Nitsche technique to the intermediate problem leads easily to
       \[\|\bar{\bm u}-\widehat{\bm u}_h\|_{\bm L^{2}(\Gamma)}\leqslant Ch^{r+1/2}\|\bm u\|_{\bm H^{r+1/2}(\Gamma)}.\]
       However, using this to obtain error estimates in $\bm L^2(\Gamma)$ for $\bar{\bm u}_h$ is not immediate because {$\bar{\bm u}_h$} satisfies a problem with a perturbed operator and perturbed second member. Following \cite[Remark 26.1]{Ciarlet91}, the error would be of the same order as
       \[\|(\Talpha-\tildeTalpha)\bar{\bm u}_h\|_{\bm H^{-1/2}(\Gamma)}+\|{\gzero}-\tildegzero\|_{\bm H^{-1/2}(\Gamma)}.\]

       Using the improved error estimate for the discrete approximation of the stress force on the boundary for regular solutions in \cite[Proposition 17]{GunzburgerHou1992}, we find that the convergence order $r+1/2$ for those terms can be achieved under the following two assumptions: first, that $\bm y_d\in \bm H^{r-1/2}(\Omega)$, which is quite reasonable; but also that $\bar{\bm u}_h$ is bounded in $\bm H^{1+r}(\Gamma)$. But this second assumption {requires a higher} regularity of the optimal solution; {in such a case} the order of convergence in $\bm H^{1/2}(\Gamma)$ would be increased by another $1/2$.

       In numerical experiments, this is the behavior usually observed {with the ``Mini'' finite element}: order $3/2$ in $\bm H^{1/2}(\Gamma)$ and order $2$ in $\bm L^{2}(\Gamma)$.

\end{remark}

\begin{remark}\label{remark:L2_control_problem_FEM_analysis} Although the discretizations of the $\bm L^2$ regularized problem and the $\bm H^{1/2}$ regularized problem are very similar, the error analysis performed for the second case cannot be carried out for the first because of the lack of regularity of the solution $\bm u_0\in \bm H^{s}(\Gamma)$ for $0\leqslant s<s^\star$, where $s^\star=\min\{1/2,\xi-1/2\}$.

    Using the general discretization error estimate of \cite[Theorem 3.2]{APel_Mateos_Pfefferer_Arnd_DBC_MCRF_2018}, we see that the error is bounded by the best approximation error in the space, the error related to the discretization of the state equation, and the error related to the discrete approximation of the stress force on the boundary. While we have no results for the last two ones, the first one is determined by the Sobolev exponent $s$, so one cannot expect more than $h^s$ for the error.

\end{remark}

%%%%%%%%%%%%%%%%%%%%%%%%%%%%%%%%%%%%%%%%%%%%%%%%%%%%%%%%%%%%%%%%%%%%%%%%%%%
\section{Numerical experiments}
\label{Numer_ex}
\setcounter{equation}{0}
In this section we carry out some numerical experiments to {compare the solutions of the two control problems (\ref{P0}) and (\ref{P12}), and also illustrate how the convergence orders can vary due to the shape of the domain and the problem data.} We present two examples in a square domain, the first one having a very regular solution, and one example in an L-shaped domain. We discretize each problem using the ``Mini'' finite element \cite{Koko2018} and a family of meshes of size $h_i=2^{-i}\sqrt{2}$ obtained by regular refinement of an initial coarse mesh of size $h_0=\sqrt{2}$. {For one problem, we also discretize using Taylor-Hood elements.} Since we do not have the exact solution, we compare the obtained solutions for $i=2,\ldots,I-2$ with the reference solution obtained for $i=I$, where $I=9$ for the square (a mesh with $2\times 2^{2\times 9} = 524288$ elements) and $I=8$ for the L-shaped domain (a mesh with $6\times 2^{2\times 8} =  393216$ elements). For $i=2,\ldots,I-2$ we can solve the ``big'' optimality system using {\sc Matlab}'s \texttt{mldivide}. For $i=I$ we run out of memory and solve the reduced optimality system using {\sc Matlab}'s \texttt{pcg}.

Let $\bm E_h=\bm u-\bm u_h$, we report the $\bm L^2(\Gamma)$-norm error and the $\bm H^{1/2}(\Gamma)$-seminorm error, both computed using the equivalent mesh-independent discrete norms obtained in \cite{Casas-Raymond2006b}.
\begin{example}\label{Example1}
	We consider the unit square domain $\Omega = (0,1)^2$ and set the regularization parameter $\alpha = 1.0e-3$. We choose the forcing ${\bm f}=(1,1)$, and for the target state we choose the large vortex given in \cite{John-Linke2017},
	\begin{eqnarray*}
		{\bm y}_d &=& 200\times [x_1^2(1-x_1)^2x_2(1-x_2)(1-2x_2);
		 - x_1(1-x_1)(1-2x_1)x_2^2(1-x_2)^2],
	\end{eqnarray*}
	see the left of \Cref{Figure_01}. For a related example using tangential boundary control, see \cite{GongHuMateosSinglerZhang2018}. The data size in terms of the tracking functional can be measured as ${\bm F}(\bm 0) = 0.302339$.
	Notice that $\nabla\cdot\bm y_d = 0$ and $\bm y_d=0$ on $\Gamma$, but it cannot be the solution of the Stokes problem with data $\bm f = (1,1)$ since $\bm f+\Delta\bm y_d$ is not a conservative field.
	
	For the $\bm H^{1/2}(\Gamma)$ regularization, we obtain a value for the tracking term of ${\bm F}(\bar{\bm u}) = 0.112264$, while for the $\bm L^2(\Gamma)$ regularization we obtain a slightly smaller value $\bm F(\bm u_0) = 0.111576$. A graph of the state,  the optimal control in the energy space,  and the solution of the $\bm L^2(\Gamma)$ regularized problem can be found in  \Cref{Figure_02}. In this case, $\bm u_0$ is a continuous function. Numerically, we find that $|q_0(x_j)+\lambda_0|<3\times 10^{-8}$ for all four corners $x_j$.	
	\begin{figure}
		\centering		
		\includegraphics[width=.32\textwidth]{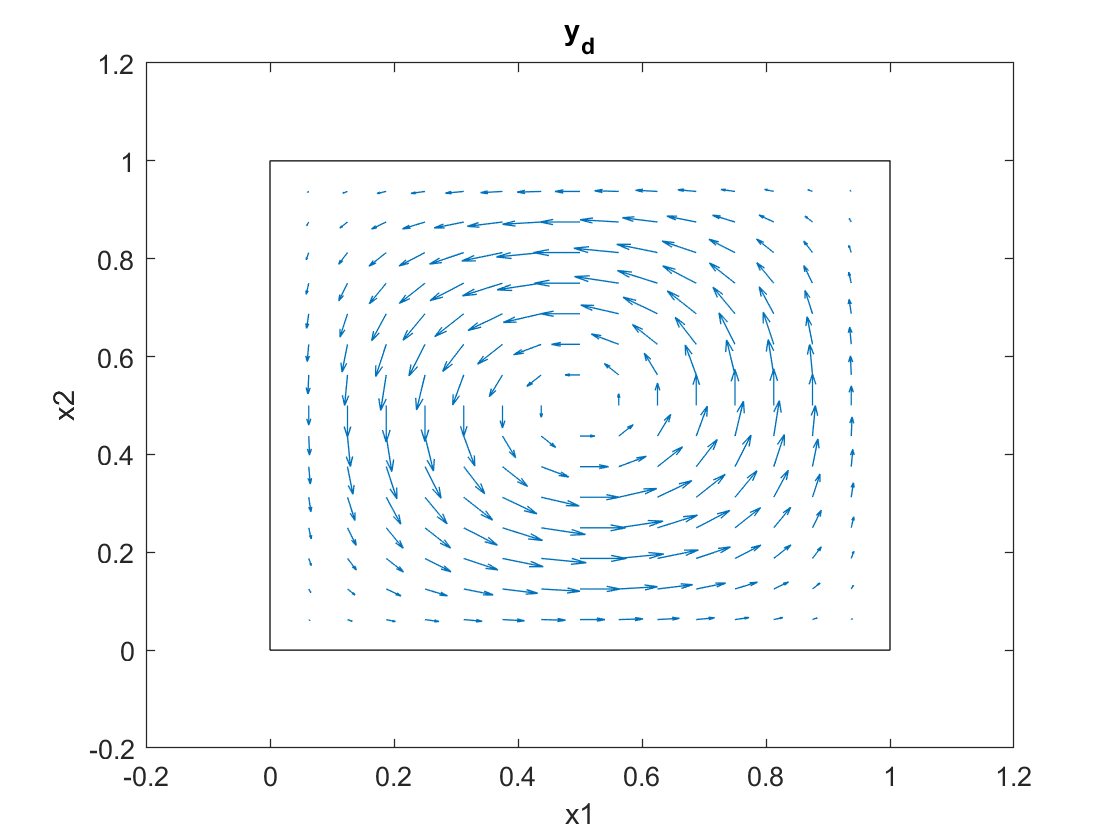}
		\includegraphics[width=.32\textwidth]{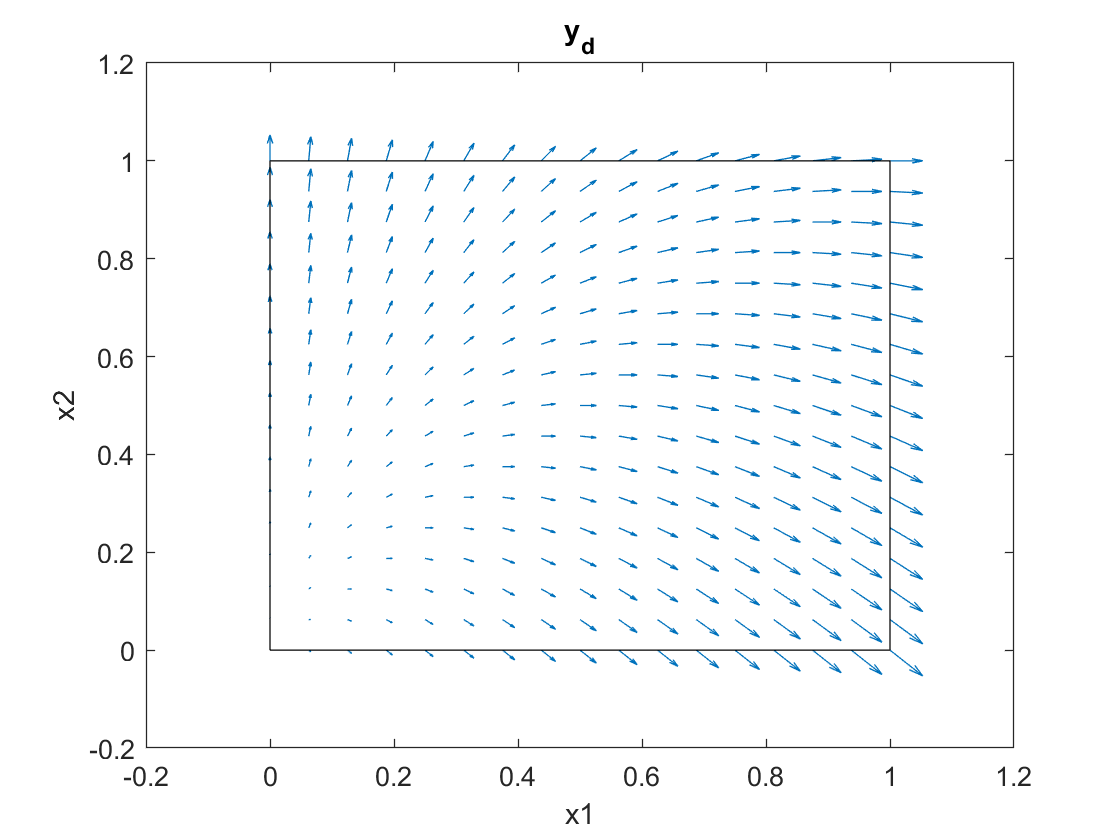}
		\includegraphics[width=.32\textwidth]{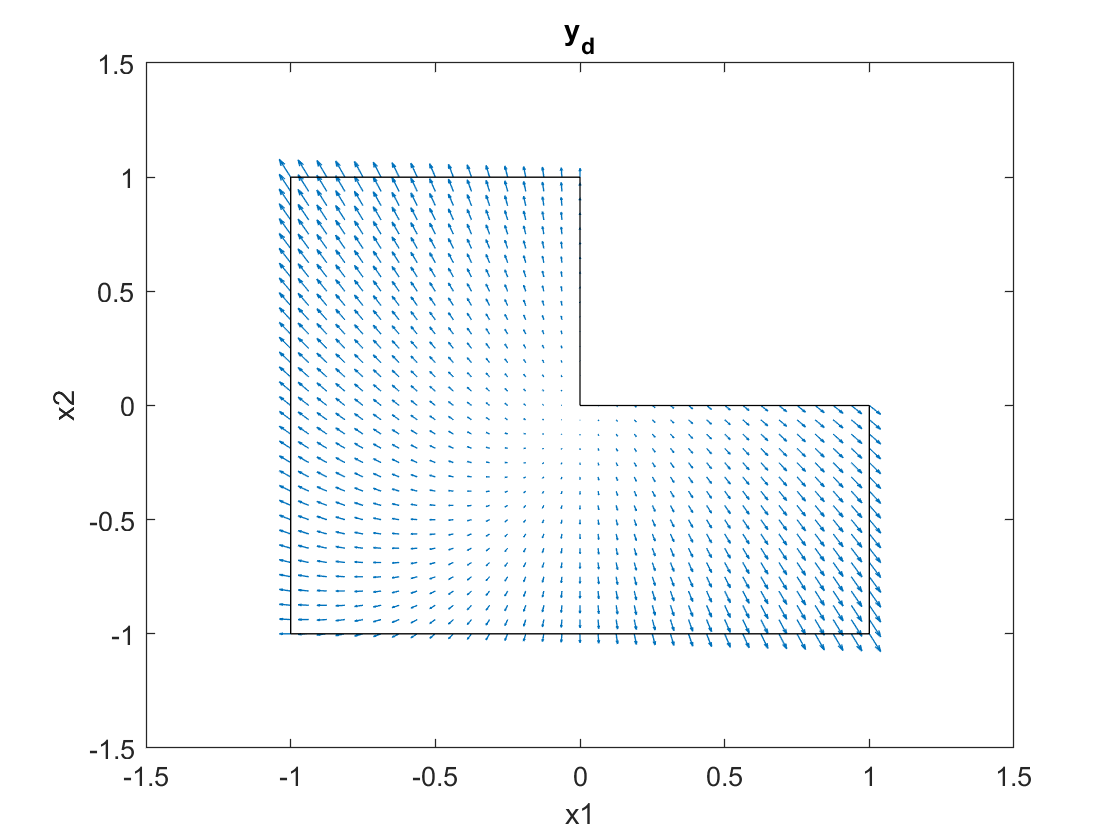}
		\caption{Left is the target of \Cref{Example1}, middle is the target of \Cref{Example2}, right is the target of \Cref{Example3}.}\label{Figure_01}
	\end{figure}

	\begin{figure}
	\centering		
	\includegraphics[width=.24\textwidth]{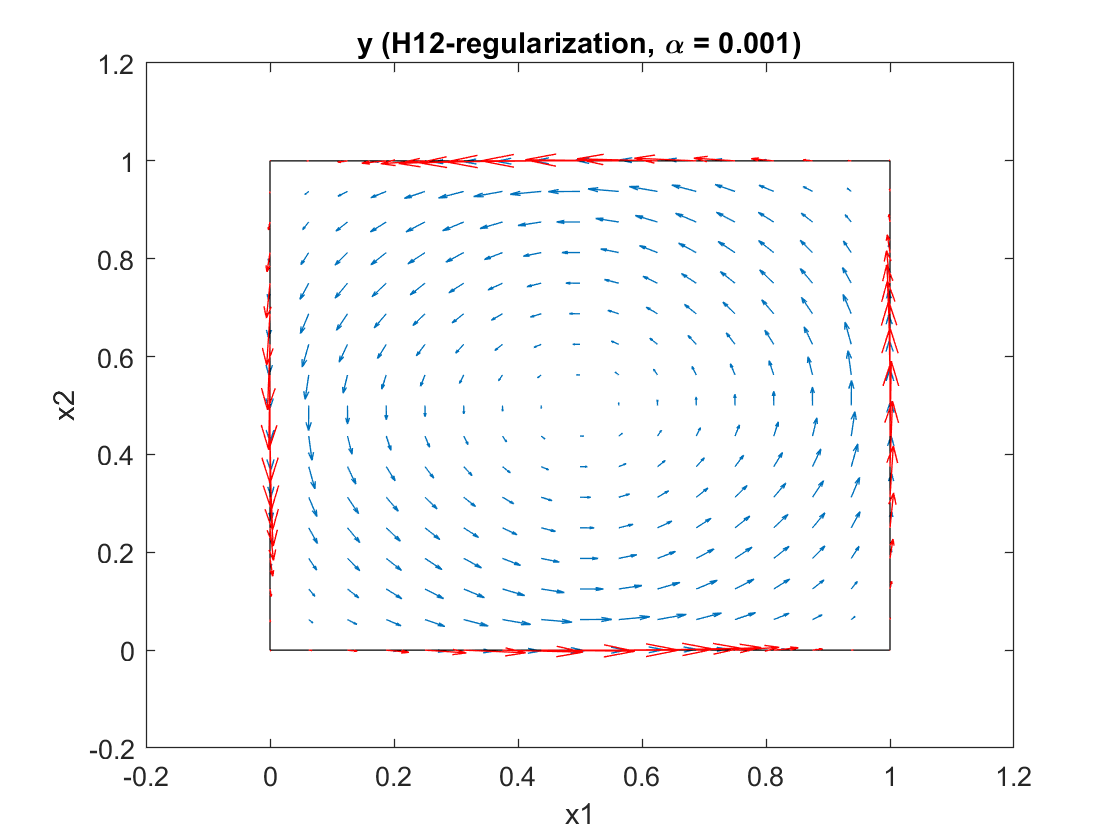}
	\includegraphics[width=.24\textwidth]{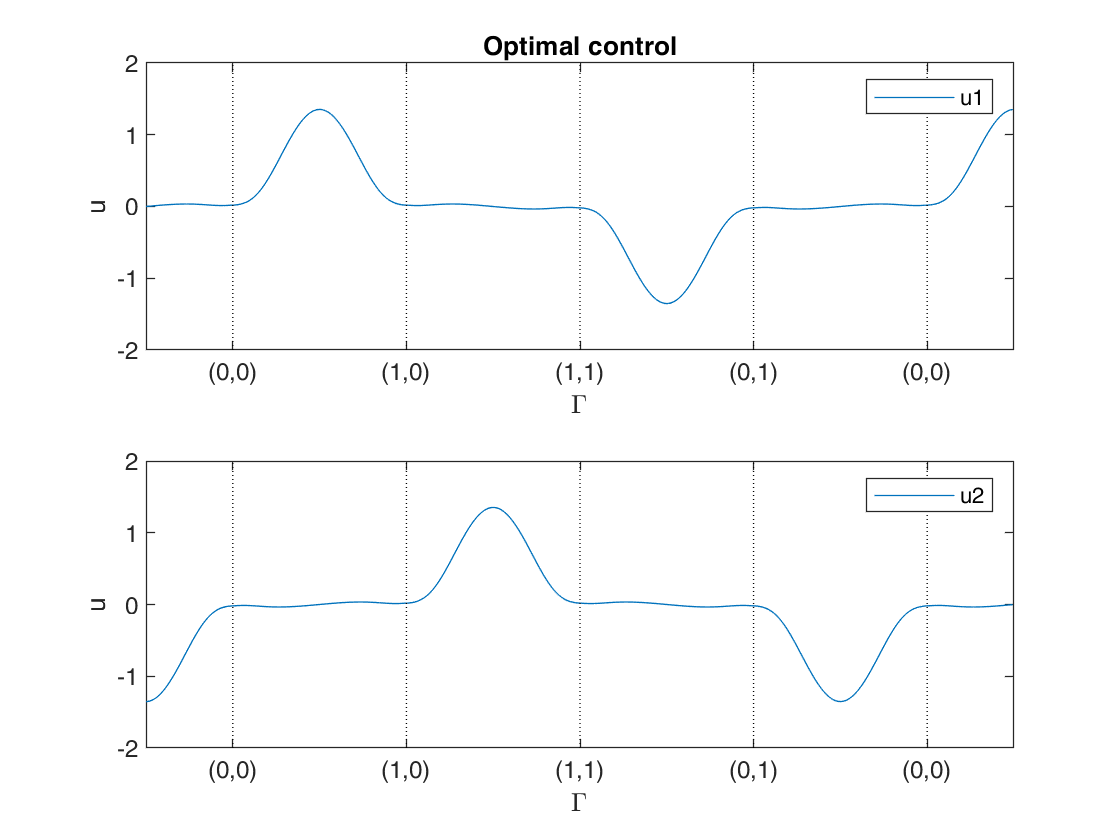}
		\includegraphics[width=.24\textwidth]{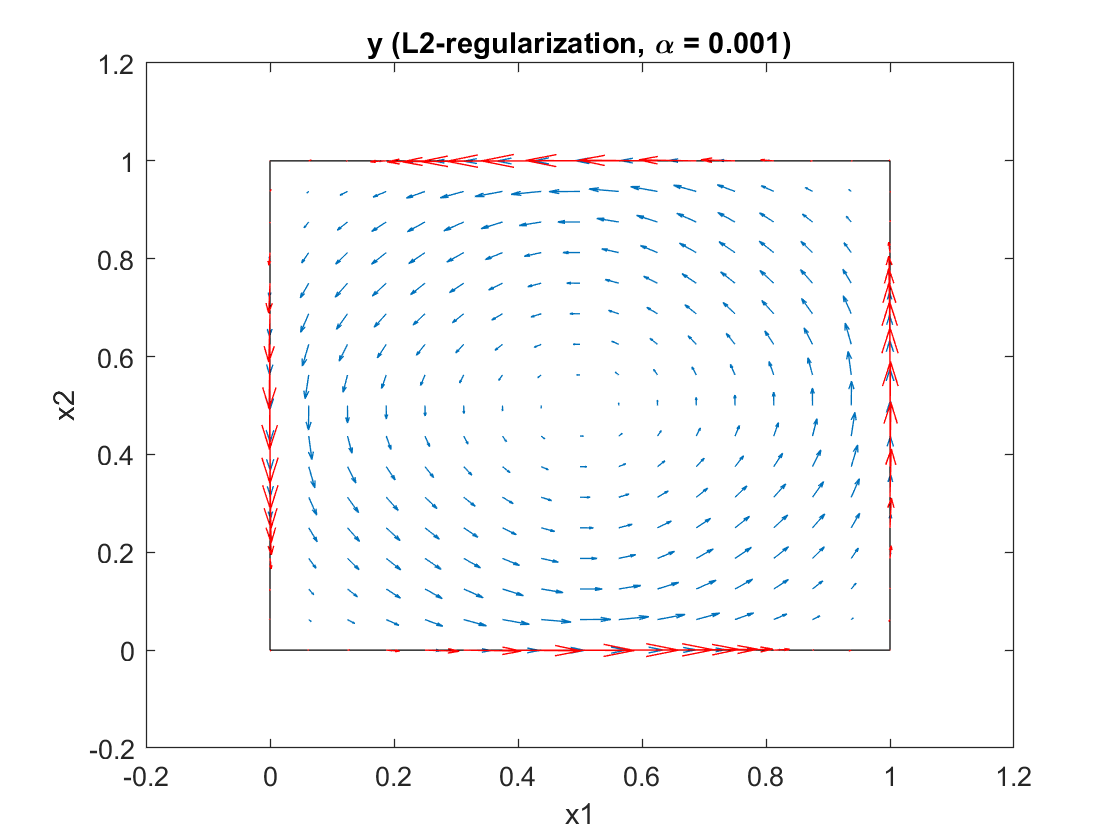}
		\includegraphics[width=.24\textwidth]{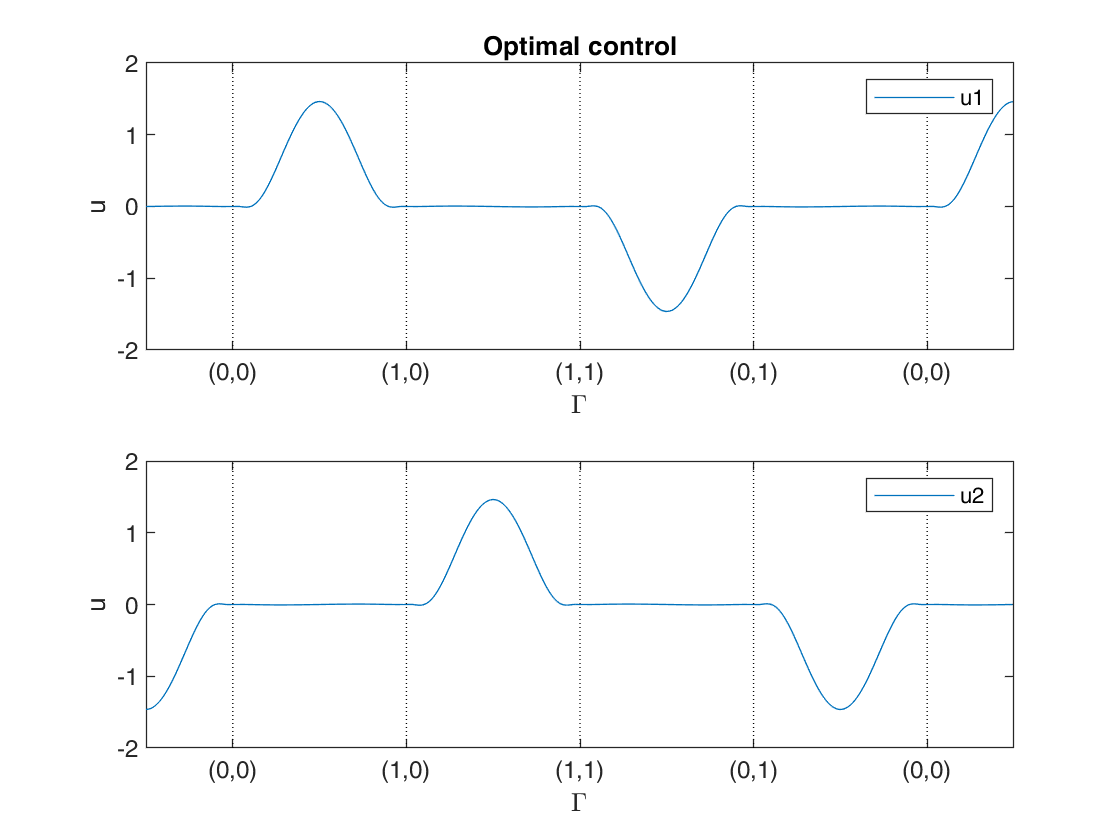}
		\caption{Solution of \Cref{Example1}:   The first two subfigures are  for  $\bm H^{1/2}(\Gamma)$ regularization, the last two subfigures are for  $\bm L^2(\Gamma)$ regularization.}\label{Figure_02}
	\end{figure}

	The value of the singular exponent for this domain is $\xi = 2.740$; see \cite[Table 1]{Dauge1989}. This means that the exponent giving the order of convergence of the energy regularized problem in the $\bm H^{1/2}(\Gamma)$-norm is  $r\approx 1$ and the exponent giving the best possible order of convergence of the $\bm L^2(\Gamma)$ regularized problem in the $\bm L^2(\Gamma)$ norm is $s\approx 0.5$.
	We obtain the results summarized in  \Cref{Table01} for the optimal control problem with $\bm H^{1/2}$ regularization  and with $\bm L^{2}(\Gamma)$ regularization. In this case the solution is very regular, the results are similar for both approaches and better than predicted by the general theory. This high regularity can also be noticed in the orders of convergence found for the other variables using higher order {Taylor-Hood} elements; see  \Cref{Table02}.
	
		\begin{table}%[H]
		\centering
				\begin{tabular}{cc|c|c|c|c|c|c|c|c}
				\Xhline{1pt}

				\multirow{2}{*}{}
				&\multirow{2}{*}{$i$}	
				&\multicolumn{4}{c|}{$\bm H^{1/2}$ regularization}	
				&\multicolumn{4}{c}{$\bm L^{2}$ regularization}	\\
				\cline{3-10}
				& &$\|\bm E_h\|_{\bm H^{1/2}(\Gamma)}$ &Rate
				&$\|\bm E_h\|_{\bm L^2(\Gamma)}$ &Rate
				&$\|\bm E_h\|_{\bm H^{1/2}(\Gamma)}$ &Rate
				&$\|\bm E_h\|_{\bm L^2(\Gamma)}$ &Rate
				\\
				\cline{1-10}
				\multirow{5}{*}{}
				&	$2$	&	 4.93E+0	&	-	    &	  8.37E-01	&	-	    &	 6.17E+0	&	-	      & 9.78E-01   &-                  \\
				&	$3$	&	1.62E+0	    &	1.61	&	  2.56E-01	&	1.71	&	 2.01E+0	&	1.62	  & 3.03E-01   &1.69                \\
				&	$4$	&	 4.82E-01	&	1.75	&	  6.80E-02	&	1.91	&	 6.37E-01	&	1.65	  & 8.00E-02   &1.92           \\
				&	$5$	&	 1.39E-01	&	1.79	&	  1.75E-02	&	1.96	&	 1.87E-01	&	1.77	  & 2.01E-02   &1.93            \\
				&	$6$	&	4.07E-02	&	1.78	&	  4.37E-03	&	2.00	&	 5.54E-02	&	1.75	  & 5.31E-03   &1.98             \\

				\Xhline{1pt}

			\end{tabular}
		\caption{Errors and experimental order of convergence for  \Cref{Example1}.}\label{Table01}
	\end{table}

	\begin{table}%[H]
		\centering
	{
		\begin{tabular}{c|c|c|c|c|c|c|c}
			\Xhline{1pt}

			\multirow{2}{*}{}
			&\multirow{2}{*}{$i$}	
			&\multicolumn{2}{c|}{$\|\bm y-\bm y_h\|_{\bm L^2(\Omega)}$}	
			&\multicolumn{2}{c|}{$\|\bm u-\bm u_h\|_{\bm H^{1/2}(\Gamma)}$}	
			&\multicolumn{2}{c}{$\|\bm z-\bm z_h\|_{\bm L^2(\Omega)}$}	\\
			\cline{3-8}
			& &Error &Rate
			&Error &Rate
			&Error &Rate
			\\
			\cline{1-8}
			\multirow{5}{*}{ $\mathcal P_2-\mathcal P_1$}
			&	$1$	&	1.73E-03	&	-	&	  2.36E-02	&	-	&	 2.03E-03	&	-	 \\
			&	$2$	&	 2.76E-04	&	2.65	&	  8.14E-03	&	1.54	&	 3.79E-04	&	2.42	 \\
			&	$3$	&	 3.69E-05	&	2.90	&	  2.20E-03	&	1.89	&	 5.13E-05	&	2.89	 \\
			&	$4$	&	 5.12E-06	&	2.85	&	  6.16E-04	&	1.83	&	 6.61E-06	&	2.95	 \\
			&	$5$	&	 7.36E-07	&	2.80	&	  1.81E-04	&	1.76	&	 1.81E-07	&	2.98	 \\

			\cline{1-8}
			\multirow{5}{*}{ $\mathcal P_3-\mathcal P_2$}
			&	$1$	&	4.54E-04	&	-	&	  9.56E-02	&	-	&	     6.28E-03	&	-	 \\
		&	$2$	&	 4.17E-05	&	3.45	&	  1.70E-03	&2.49	&	 5.67E-04	&	3.47	 \\
		&	$3$	&	 4.39E-06	&	3.25	&	 3.98E-03	&	2.10	&	 4.45E-05	&	3.67	 \\
		&	$4$	&	6.50E-07	&	2.75	&	  1.26E-04	&	1.65	&	 3.75E-06	&	3.57	 \\
		&	$5$	&	 9.61E-08	&	2.76	&	 3.85E-04	&	1.72	&	 3.20E-07	&	3.55	 \\

			\Xhline{1pt}

		\end{tabular}
	}
	\caption{Errors and experimental order of convergence for the state and adjoint state for  \Cref{Example1}.}\label{Table02}
\end{table}

\end{example}

\begin{example}\label{Example2}
	Set $\Omega = (0,1)^2$, $\alpha = 1$, $\bm f = \bm 0$ and $\bm y_d = (x_1;x_2-x_1)$. The data size is ${\bm F}(\bm 0) = 0.25$, and the target does not belong to $\bm V^0(\Omega)$. A graph of the target field is sketched in the middle of \Cref{Figure_01}.
	
	For the energy regularization, we find ${\bm F}(\bar{\bm u}) = 0.117607$; see the first two subfigures of \Cref{Figure_03}. For the $\bm L^2(\Gamma)$-regularized problem, we have that ${\bm F}(\bm u_0) = 0.158279$. The control is discontinuous at the corners, see the last subfigure of \Cref{Figure_03}, and hence is not in $\bm H^{1/2}(\Gamma)$. Finite element error results are summarized in  \Cref{Table03}. Again we have $r\approx 1$ and $s\approx 0.5$. In this case, the observed experimental order of convergence for the $\bm L^2(\Gamma)$ error of the $\bm L^2(\Gamma)$-regularized problem is quite close to $s$.
	
	\begin{figure}
		\centering
		\includegraphics[width=.24\textwidth]{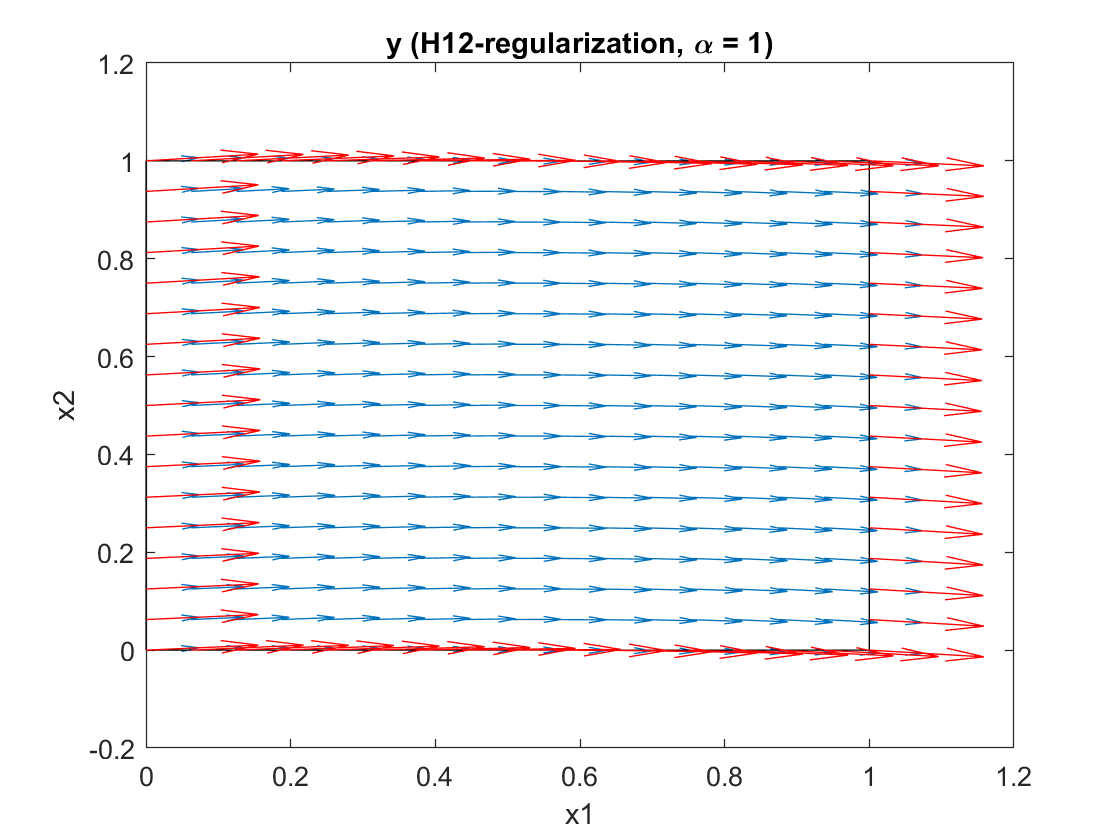}
		\includegraphics[width=.24\textwidth]{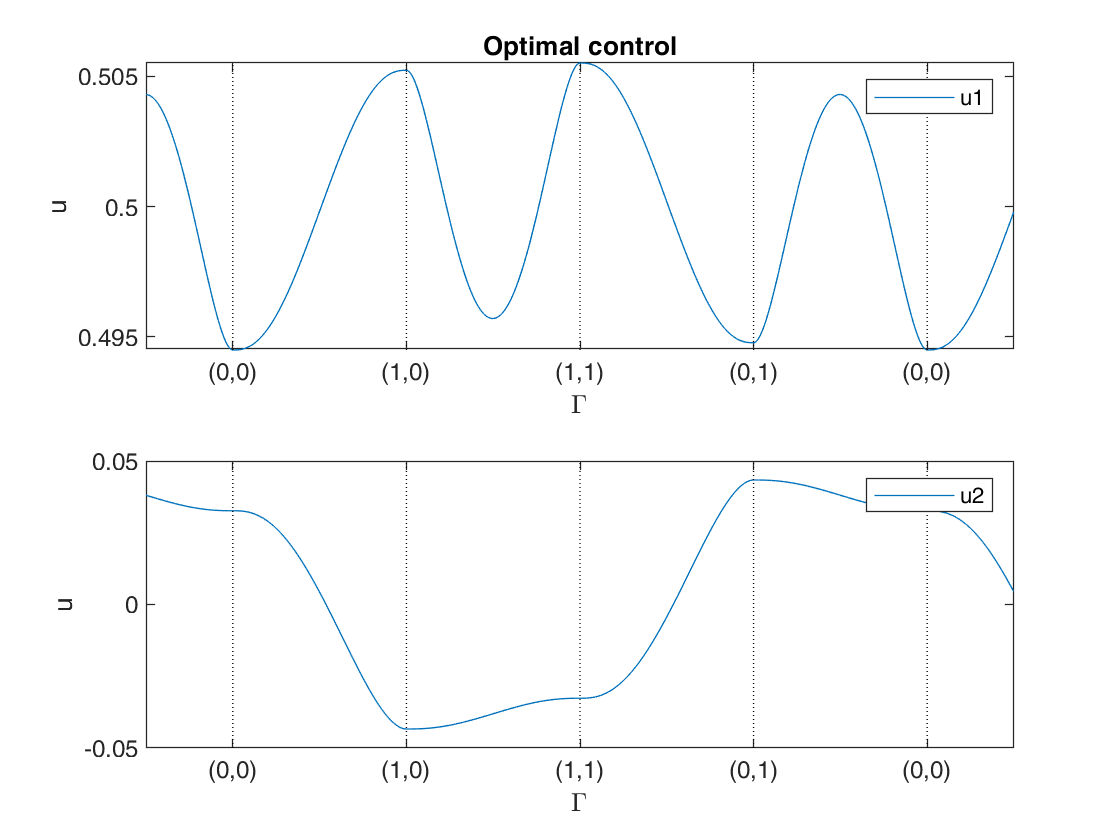}
		\includegraphics[width=.24\textwidth]{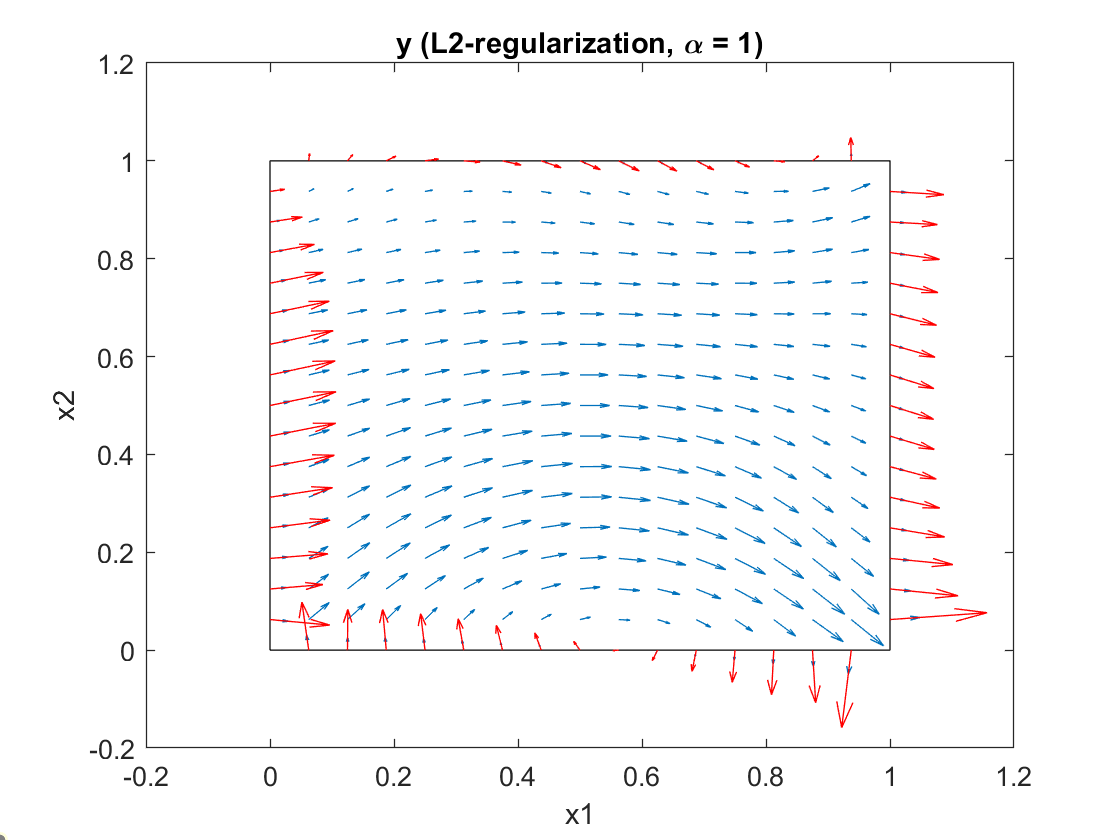}
		\includegraphics[width=.24\textwidth]{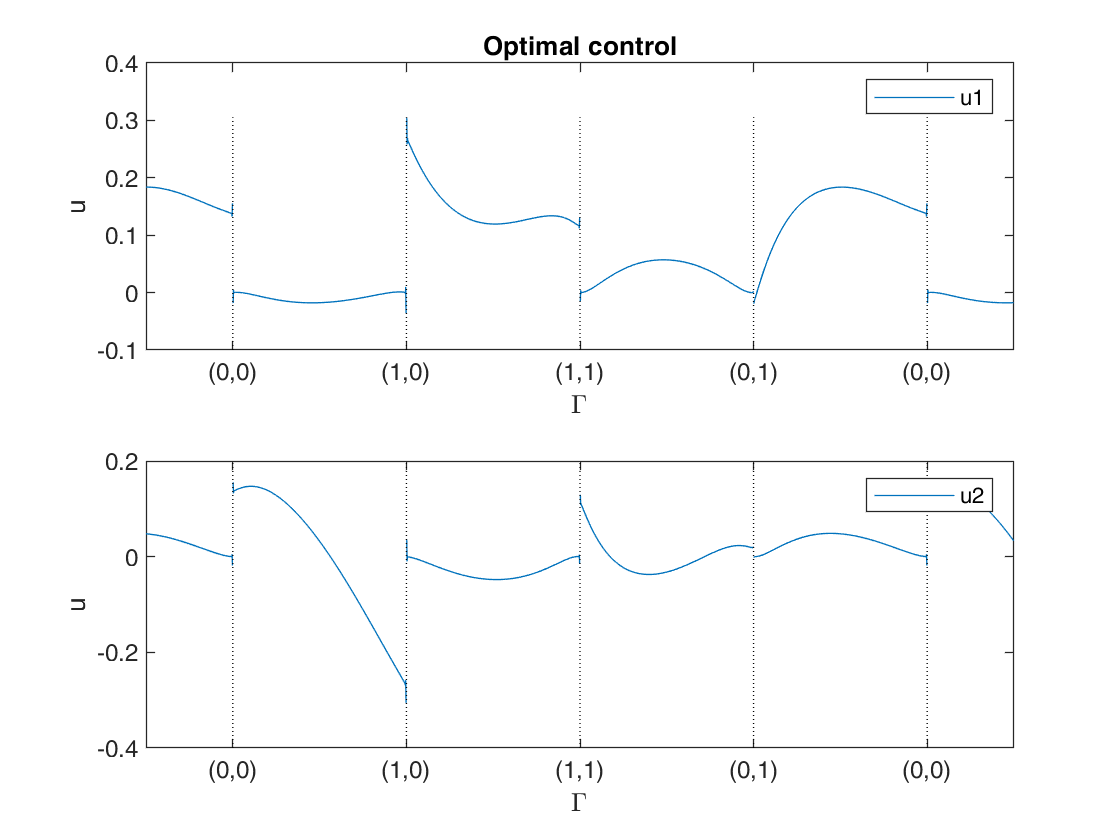}
	\caption{Solution of \Cref{Example2}:   The first two subfigures are  for  $\bm H^{1/2}(\Gamma)$ regularization, the last two subfigures are for  $\bm L^2(\Gamma)$ regularization.}\label{Figure_03}
	\end{figure}
	
	\begin{table}%[H]
	\centering
	\begin{tabular}{cc|c|c|c|c|c|c}
		\Xhline{1pt}

		\multirow{2}{*}{}
		&\multirow{2}{*}{$i$}	
		&\multicolumn{4}{c|}{$\bm H^{1/2}$ regularization}	
		&\multicolumn{2}{c}{$\bm L^{2}$ regularization}	\\
		\cline{3-8}
		& &$\|\bm E_h\|_{\bm H^{1/2}(\Gamma)}$ &Rate
		&$\|\bm E_h\|_{\bm L^2(\Gamma)}$ &Rate
		&$\|\bm E_h\|_{\bm L^2(\Gamma)}$ &Rate
		\\
		\cline{1-8}
		\multirow{5}{*}{}
		&	$2$	&	2.80E-02	&	-	    &	 3.77E-03	&	-	    &	 1.29E-01	&	-	   \\
		&	$3$	&	9.88E-03    &	1.50	&	  1.05E-03	&	1.85	&	 8.90E-02	&	0.53   \\
		&	$4$	&	 3.34E-03	&	1.57	&	  2.81E-04	&	1.90	&	 6.22E-02	&	0.52\\
		&	$5$	&	 1.10E-03	&	1.60	&	  7.32E-05	&	1.94	&	 4.37E-02	&	0.51\\
		&	$6$	&	3.67E-04	&	1.59	&	  1.86E-05	&	1.98	&	 3.08E-02	&	0.51\\

		\Xhline{1pt}

	\end{tabular}
	\caption{Errors and experimental order of convergence for  \Cref{Example2}.}\label{Table03}
\end{table}

\end{example}

\begin{example}\label{Example3}
	We take the same data as \Cref{Example2}, but now consider the L-shaped domain $\Omega=(-1,1)^2\setminus(0,1)^2$.
	The results on this domain are ${\bm F}(\bm 0) = 1.75$, ${\bm F}(\bar{\bm u}) = 1.107016$, ${\bm F}(\bm u_0) = 1.044080$. Graphs of the data and the solutions can be found in the right \Cref{Figure_01} and the first two subfigures of
	\Cref{Figure_04}. Experimental orders of convergence are in  \Cref{Table04}. The singular exponent for this domain is $\xi = 0.544$, so $r\approx 0.544$ and $s \approx 0.044$. The observed orders of convergence are higher.
	
	One remarkable fact is that for the $\bm L^2(\Gamma)$-regularized problem the optimal control need not tend to $\infty$ at a nonconvex corner, as happens with Dirichlet optimal control problems governed by the Poisson equation in a nonconvex polygonal domain.
	
	\begin{figure}
		\centering
		\includegraphics[width=.24\textwidth]{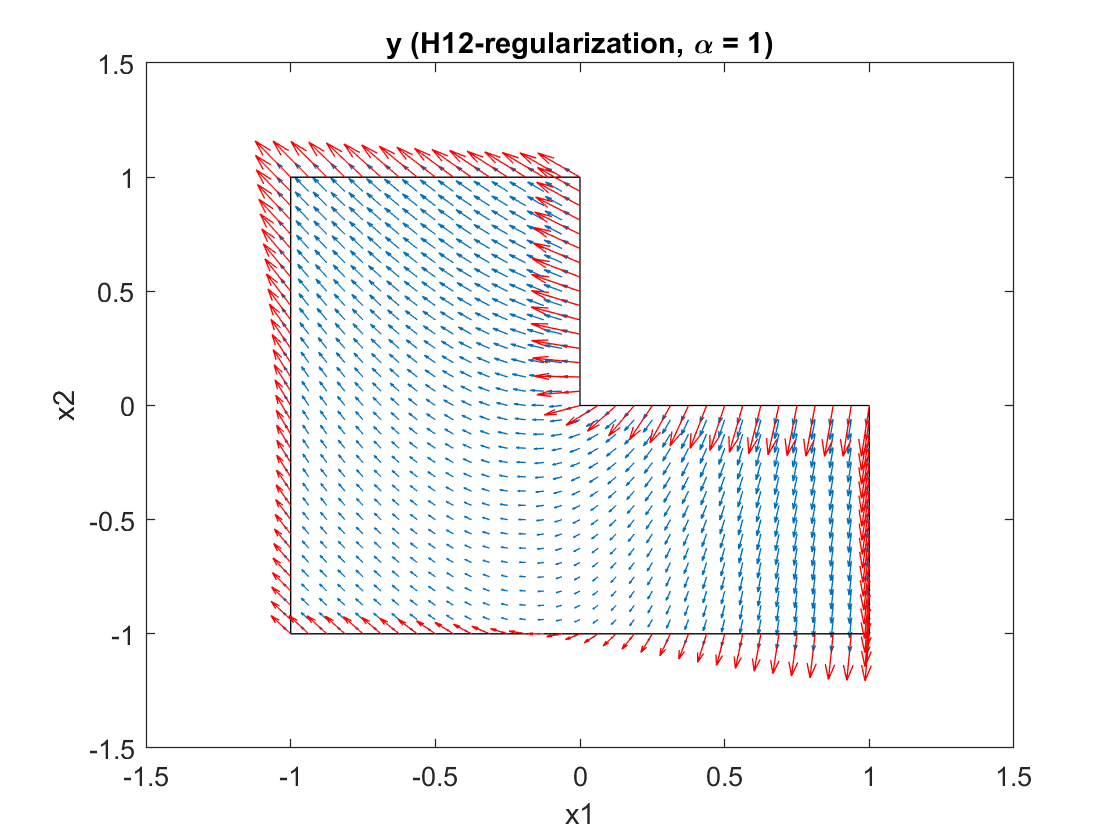}
		\includegraphics[width=.24\textwidth]{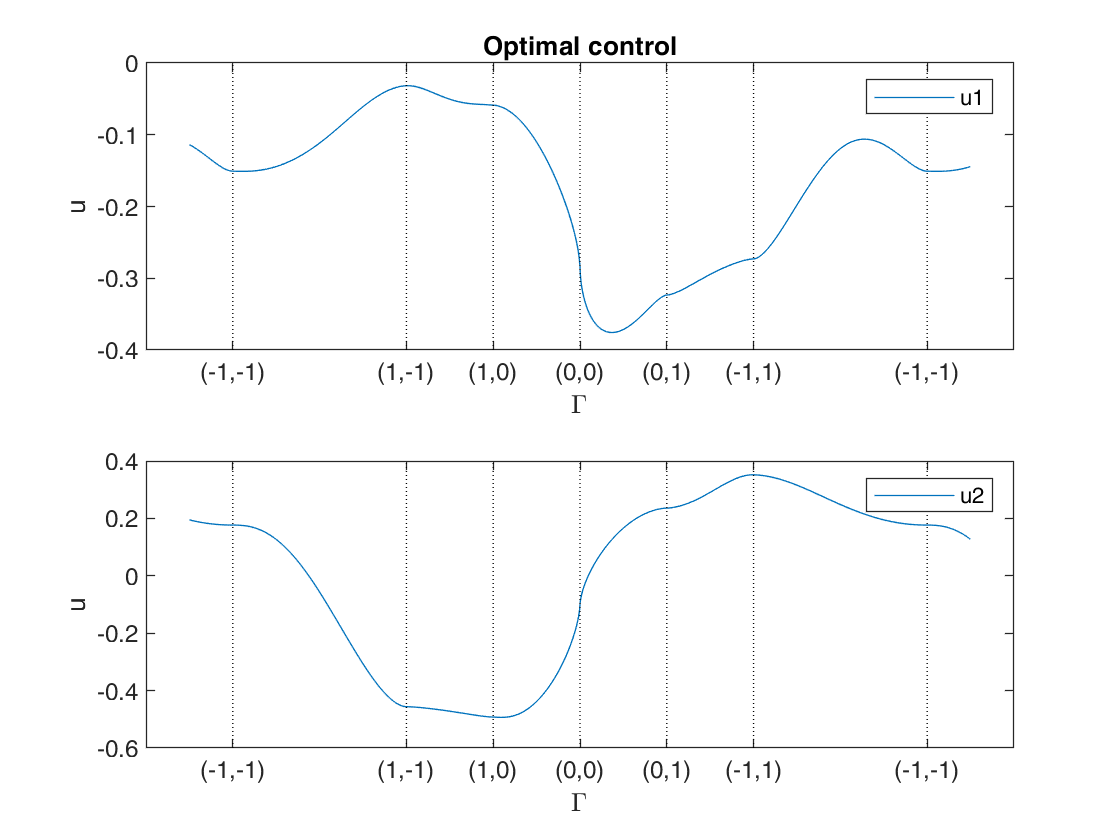}
		\includegraphics[width=.24\textwidth]{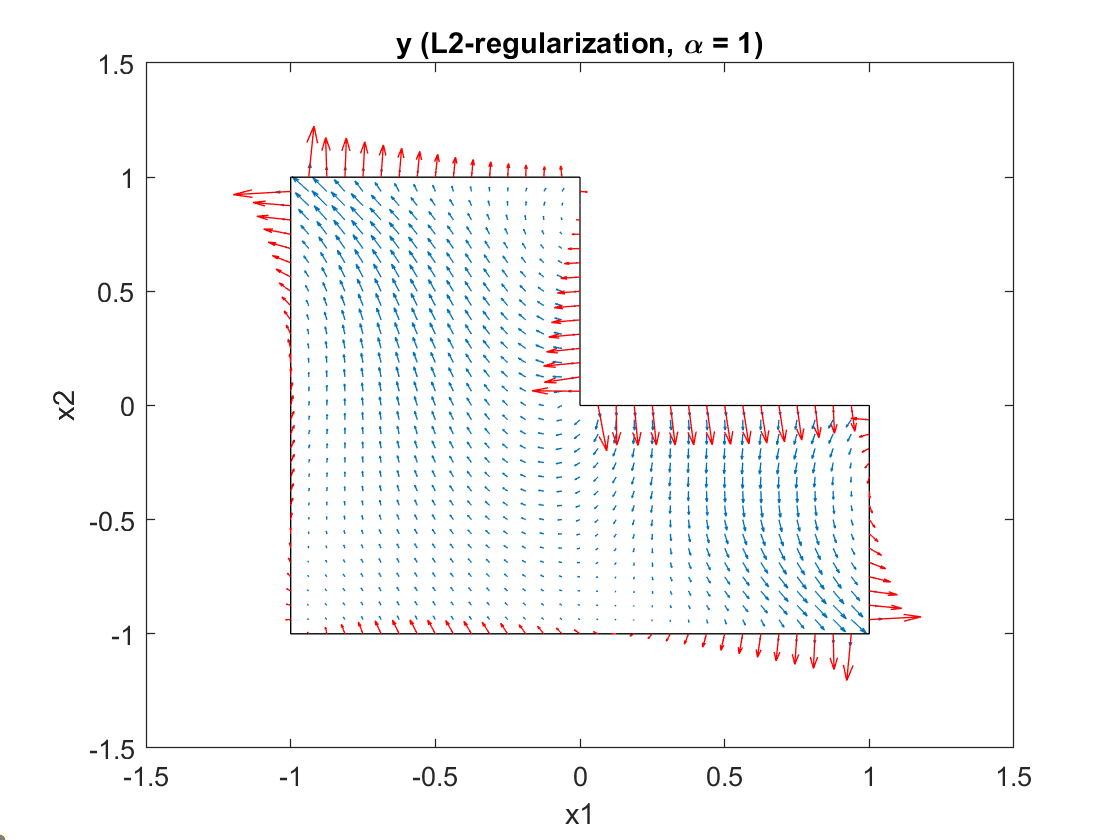}
		\includegraphics[width=.24\textwidth]{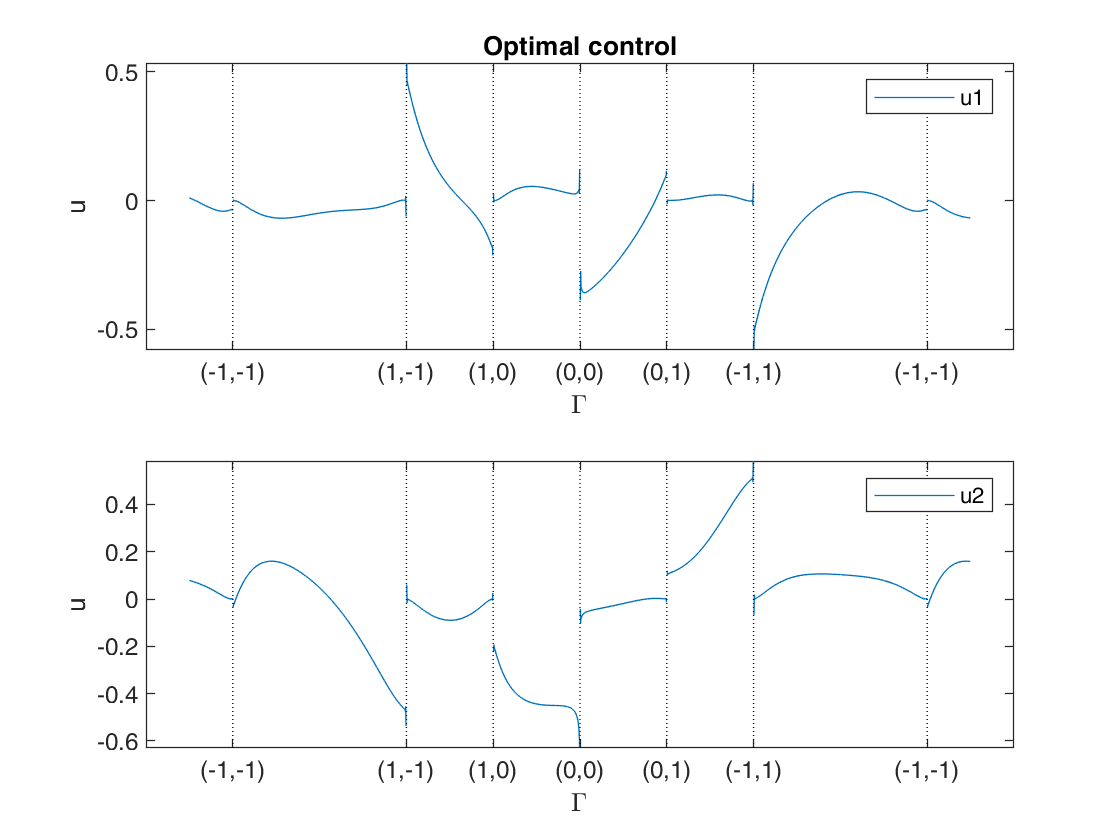}
		\caption{Solution of \Cref{Example3}:   The first two subfigures are  for  $\bm H^{1/2}(\Gamma)$ regularization, the last two subfigures are for  $\bm L^2(\Gamma)$ regularization.}\label{Figure_04}
	\end{figure}

		\begin{table}%[H]
		\centering
		\begin{tabular}{cc|c|c|c|c|c|c}
			\Xhline{1pt}

			\multirow{2}{*}{}
			&\multirow{2}{*}{$i$}	
			&\multicolumn{4}{c|}{$\bm H^{1/2}$ regularization}	
			&\multicolumn{2}{c}{$\bm L^{2}$ regularization}	\\
			\cline{3-8}
			& &$\|\bm E_h\|_{\bm H^{1/2}(\Gamma)}$ &Rate
			&$\|\bm E_h\|_{\bm L^2(\Gamma)}$ &Rate
			&$\|\bm E_h\|_{\bm L^2(\Gamma)}$ &Rate
			\\
			\cline{1-8}
			\multirow{5}{*}{}
			&	$2$	&	4.11E-01	&	-	    &	 7.40E-02	&	-	    &	 3.40E-01	&	-	   \\
			&	$3$	&	2.49E-01    &	0.72	&	  3.42E-02	&	1.12	&	 2.38E-01	&	0.51   \\
			&	$4$	&	 1.53E-01	&	0.71	&	  1.55E-02	&	1.14	&	 1.71E-01	&	0.48\\
			&	$5$	&	 9.12E-02	&	0.74	&	  6.86E-03	&	1.18	&	 1.24E-01	&	0.46\\
			&	$6$	&	5.07E-02	&	0.85	&	  2.83E-03	&	1.28	&	8.95E-02	&	0.47\\

			\Xhline{1pt}

		\end{tabular}
		\caption{Errors and experimental order of convergence for  \Cref{Example3}.}\label{Table04}
	\end{table}

\end{example}

\bibliographystyle{siamplain}
%\bibliography{/Users/ywzfg/Dropbox/Research/Bib/Dirichlet_Boundary_Control,/Users/ywzfg/Dropbox/Research/Bib/Added}
%
\bibliography{Dirichlet_Boundary_Control,Added,Mypapers}
\end{document}